\documentclass[11pt,reqno]{amsart}

\usepackage{times}
\usepackage{comment}
\usepackage{amsmath}
\usepackage{dsfont,mathtools,amssymb}
\usepackage{color}
\usepackage[hidelinks]{hyperref}
\mathtoolsset{showonlyrefs,showmanualtags}

%

\setlength{\textwidth}{6.5in}
\setlength{\textheight}{9in}	
\addtolength{\evensidemargin}{-0.8125in}
\addtolength{\oddsidemargin}{-0.8125in}
\addtolength{\topmargin}{-0.625in}

\newtheorem{theorem}{Theorem}[section]
\newtheorem{proposition}[theorem]{Proposition}
\newtheorem{lemma}[theorem]{Lemma}

\newtheorem{remark}[theorem]{Remark}
\newtheorem{definition}[theorem]{Definition}
\newtheorem{example}[theorem]{Example}

\numberwithin{equation}{section}
\numberwithin{figure}{section}

\usepackage{tikz}
\usepackage{subfigure}
\usepackage{amsmath}


\definecolor{darkblue}{rgb}{0.1,0.1,0.7}

\definecolor{darkred}{rgb}{0.7,0.1,0.1}



\newcommand{\ind}{\mathbf{1}}

\newcommand{\e}{\varepsilon}



\newcommand{\be}{\begin{equation}}

 
\newcommand{\cA}{\ensuremath{\mathcal A}} 
 
\newcommand{\cC}{\ensuremath{\mathcal C}} 
\newcommand{\cD}{\ensuremath{\mathcal D}} 
\newcommand{\cE}{\ensuremath{\mathcal E}} 
\newcommand{\cF}{\ensuremath{\mathcal F}} 
\newcommand{\cG}{\ensuremath{\mathcal G}}

\newcommand{\cN}{\ensuremath{\mathcal N}} 
 
\newcommand{\cP}{\ensuremath{\mathcal P}} 
\newcommand{\cQ}{\ensuremath{\mathcal Q}}

\newcommand{\cT}{\ensuremath{\mathcal T}} 
\newcommand{\cU}{\ensuremath{\mathcal U}} 
\newcommand{\cV}{\ensuremath{\mathcal V}}


\newcommand{\bbE}{{\ensuremath{\mathbb E}} }

\newcommand{\bbN}{{\ensuremath{\mathbb N}} } 
 
\newcommand{\bbP}{{\ensuremath{\mathbb P}} } 
 
\newcommand{\bbR}{{\ensuremath{\mathbb R}} }

\newcommand{\si}{\sigma}

\newcommand{\tc}{\, |\, }

\newcommand{\kernel}[4]{{\cQ}(#1,#2\,;\,#3,#4)}
\newcommand{\kernelJ}[4]{{\cQ_{\bf J}}(#1,#2\,;\,#3,#4)}
\newcommand{\kernelJx}[4]{{\cQ_{{\bf J},x}}(#1,#2\,;\,#3,#4)}
\newcommand{\kernelJhat}[4]{{\widehat\cQ_{{\bf J}}}(#1,#2\,;\,#3,#4)}

\newcommand{\ignore}[1]{}

%
\let\a=\alpha \let\b=\beta   \let\d=\delta  \let\e=\varepsilon
 \let\g=\gamma     \let\k=\kappa  \let\l=\lambda
      \let\o=\omega      
\let\r=\rho   \let\t=\tau   
  \let\z=\zeta
     \let\L=\Lambda 
\let\O=\Omega

\def\({\left(}
\def\){\right)}
%


\allowdisplaybreaks

\title[Nonlinear dynamics for the Ising model]{Nonlinear dynamics for the Ising model}
 \subjclass[2020]{60K35,60J80,82C40,82C20}

\keywords{Mass action kinetics; Nonlinear Markov chains; Mixing time; Ising model; Spin systems; Branching processes}

\author{Pietro Caputo and Alistair Sinclair}
\address{Pietro Caputo\\ Universit\`{a} Roma Tre}
\email{pietro.caputo@uniroma3.it}
\address{Alistair Sinclair\\ University of California, Berkeley}
\email{sinclair@cs.berkeley.edu}
\thanks{PC was supported in part by the Miller Institute at UC Berkeley.  AS was supported
in part by NSF grants CCF-1815328 and CCF-2231095.
Part of this work was done while AS was visiting EPFL, Lausanne and while PC was visiting 
UC Berkeley} 
\subjclass[2010]
{82C40, 82C20, 60J80}

\begin{document}

\begin{abstract} 
We introduce and analyze a natural class of nonlinear dynamics for spin systems such as the Ising model.
This class of dynamics is based on the framework of mass action kinetics, which models the evolution
of systems of entities under pairwise interactions, and captures a number of important 
nonlinear models from various fields, including chemical reaction networks, Boltzmann's model of an ideal gas,
recombination in  population genetics and
genetic algorithms.  In the context of spin systems, it is a natural generalization of linear dynamics based
on Markov chains, such as Glauber dynamics and block dynamics, which are by now well understood.  
However, the inherent nonlinearity makes the dynamics much harder to analyze, and rigorous
quantitative results so far are limited to processes which converge to essentially trivial stationary
distributions that are product measures.

In this paper we provide the first quantitative convergence analysis for natural nonlinear dynamics
in a combinatorial setting where the stationary distribution contains non-trivial correlations, namely
spin systems at high temperatures.  We prove that nonlinear versions of both the Glauber dynamics
and the block dynamics converge to the Gibbs distribution of the Ising model (with given external fields)
in times $O(n\log n)$ and $O(\log n)$ respectively, where $n$ is the size of the underlying graph
(number of spins).  Given the lack of general analytical methods for such nonlinear systems, our analysis
is unconventional, and combines tools such as information percolation (due in the linear setting to Lubetzky
and Sly), a novel coupling of the Ising model with Erd\H{o}s-R\'enyi random graphs, and non-traditional 
branching processes augmented by a ``fragmentation" process.  Our results extend immediately to 
any spin system with a finite number of spins and bounded interactions.
\end{abstract}

\maketitle
\thispagestyle{empty}

\section{Introduction}\label{sec:intro}
{\it Mass action kinetics\/} is a general framework for studying systems of interacting
entities.  The framework emerged in the study of chemical reaction networks, dating
back at least to the seminal work of Horn and Jackson in the 1970s~\cite{HornJackson},
and has seen a resurgence of activity in recent years; see the monograph~\cite{FeinbergBook}.
However, it also captures a wide range of processes that are of interest in other fields,
including Boltzmann's model of an ideal gas~\cite{Boltz}, classical models of population
genetics~\cite{Hardy,Weinberg}, genetic algorithms in combinatorial optimization~\cite{goldberg,Mitchell}, 
and random sampling~\cite{Sinetal,SV13}.

We describe mass action kinetics in the special case where all interactions are pairwise and
homogeneous; this captures most of the complexity of general systems while keeping notation
and technicalities to a minimum.  Let $\O$ denote a finite set of {\it types}.
A (quadratic) mass action system is described by a directed graph whose vertices are
ordered pairs of types $(\sigma,\sigma')$, and a directed edge from $(\sigma,\sigma')$ to $(\tau,\tau')$
indicates the presence of a {\it reaction\/} in which types $\sigma,\sigma'$ 
combine to produce types~$\tau,\tau'$.  Reactions involving a specific pair $(\sigma,\sigma')$ are
governed by a {\it collision kernel\/} $\kernel{\sigma}{\sigma'}{\cdot\,}{\cdot\,}$, where $\kernel{\sigma}{\sigma'}{\tau}{\tau'}$
is the probability that the outcome of the reaction is the pair~$(\tau,\tau')$.  
We assume throughout 
the symmetry property $\kernel{\sigma}{\sigma'}{\tau}{\tau'} = \kernel{\sigma'}{\sigma}{\tau'}{\tau}$.

The state of the system at any time~$t$ is fully described by the vector $p_t$,
where $p_t(\sigma)$ is the mass of type~$\sigma$ at time~$t$, normalized so that $\sum_{\sigma\in\O} p_t(\sigma)=1$
(i.e., the $p_t(\sigma)$ can be viewed as concentrations, or probabilities).  The initial state is denoted~$p_0$.
According to the so-called
``mass action" principle, each reaction $(\sigma,\sigma')\to (\tau,\tau')$ takes place at a 
rate determined by the {\it product\/} of the current masses of types $\sigma,\sigma'$.
The dynamics of the system is described by the following set of
equations\footnote{We give the dynamics in discrete time here; a continuous
time version, in which the $\kernel{\sigma}{\sigma'}{\cdot\,}{\cdot\,}$ are reaction {\it rates}, 
can be defined analogously in the obvious way.},
one for each type~$\tau\in\O$:
\begin{equation}\label{eq:massaction}
   p_{t+1}(\tau) = \sum_{\sigma,\sigma'\!,\tau'} p_t(\sigma)p_t(\sigma') \kernel{\sigma}{\sigma'}{\tau}{\tau'}.
\end{equation}

At this level of generality such systems can be arbitrarily badly behaved (e.g., chaotic),
so it is necessary to impose standard regularity conditions.  A mass action system
is said to be {\it reversible\/}\footnote{In the mass action kinetics literature, the term
``reversible" has unfortunately been used to denote the weaker property that 
$\kernel{\sigma}{\sigma'}{\tau}{\tau'}\ne 0$ iff $\kernel{\tau}{\tau'}{\sigma}{\sigma'}\ne 0$, whereas in physics reversibility
is synonymous with detailed balance.  In this paper, we shall use the terms ``detailed balanced" and
``reversible" interchangeably to denote the stronger condition~\eqref{eq:detbal}.}
or {\it detailed balanced\/} if there exists a strictly positive mass vector $\mu=(\mu(\sigma))>0$
such that 
\begin{equation}\label{eq:detbal}
    \mu(\sigma) \mu(\sigma') \kernel{\sigma}{\sigma'}{\tau}{\tau'} = \mu(\tau) \mu(\tau') \kernel{\tau}{\tau'}{\sigma}{\sigma'}\,, \qquad \forall\, \sigma,\sigma',\tau,\tau'.
\end{equation}    
It is easy to check that any such~$\mu$ is necessarily an {\it equilibrium\/} or {\it stationary\/} point for 
the dynamics~\eqref{eq:massaction}.
A mass action system may have many positive equilibrium points, but it is known (see, e.g.,~\cite{FeinbergBook},
and also Proposition~\ref{prop:invrev} in this paper)
that if any one of them satisfies the detailed balance condition then they all do.
We stress that we do {\it not\/} require the kernel~$\cQ$ to be irreducible (i.e., the directed
graph describing it need not be strongly connected).

The mass action system defined in~\eqref{eq:massaction} above can be viewed as a natural
nonlinear analog of a reversible Markov chain, whose dynamics takes the form 
$p_{t+1}(\tau) = \sum_{\sigma} p_t(\sigma) {\mathcal Q}(\sigma\,;\,\tau)$, where now ${\mathcal Q}(\sigma\,;\,\tau)$ is the 
transition matrix of the chain and the reversibility condition is $\mu(\sigma) {\mathcal Q}(\sigma\,;\,\tau) = \mu(\tau) {\mathcal Q}(\tau\,;\,\sigma)$
for all $\sigma,\tau$.  In the linear setting, there are well known criteria for convergence to stationarity and 
there is by now a vast literature on mixing times of reversible Markov chains
and their algorithmic applications to sampling, approximate counting and integration, statistical physics, etc.
By contrast, in the nonlinear setting even the most basic questions are still open: for example, the Global
Attractor Conjecture~\cite{FeinbergBook} asserts that any detailed
balanced\footnote{Actually, this property is conjectured to hold under
the weaker condition known as ``complex balance"~\cite{FeinbergBook}; see also Section~\ref{sec:cgce}.} 
mass action system converges to a stationary point 
when started from any 
initial point~$p_0$ with full support.  (In fact many stationary points
may exist, but only one is consistent with any given initial condition~$p_0$.)  And even in particular cases 
of interest where convergence has been proved, almost nothing is known about the {\it rate\/} of convergence
(the analog of the {\it mixing time\/} for Markov chains).

In the discrete combinatorial setting, one of very few examples for which useful bounds on the convergence
rate are known is the classical Hardy-Weinberg model of genetic recombination~\cite{Hardy,Weinberg}.  
Here the types are bit strings $\O=\{0,1\}^n$ (each bit representing
an allele on a chromosome), and a reaction between two strings $\sigma,\sigma'$ involves picking a ``crossover"
subset $\Lambda\subseteq  \{1,\ldots,n\}$ of positions according to some probabilistic rule and exchanging the
bits in~$\Lambda$ between~$\sigma$ and~$\sigma'$ to obtain two new strings $\tau,\tau'$.
(For example, one classical rule is to pick~$i\in \{0,\ldots,n\}$ u.a.r.\ and let $\Lambda$ consist of the first~$i$ bits.)
It is well known that this dynamics converges to the distribution~$\mu$ in which all bits are {\it independent}, with the marginal
probabilities of a~1 at each position given by those in the initial distribution~$p_0$.  In~\cite{Sinetal2,CapSin}, the rate
of convergence was related precisely to the rate at which the strings are {\it fragmented\/} by the repeated
random cuts~$\Lambda$, thus enabling very precise estimates of the convergence time for any choice of crossover rule.

The above analysis relies crucially on the fact that in the equilibrium distribution all bits are independent.
When there is even a small amount of correlation, there appear to be no techniques available
to obtain useful bounds on convergence rates.  In this paper, we address this question for arguably the most
natural example in which correlations arise, namely the Ising model of statistical physics.
Here the types are spin configurations $\sigma\in\O=\{\pm 1\}^{V}$ which assign one of two possible spin
values $\pm 1$ to each vertex of a graph $G=(V,E)$.  The {\it Gibbs distribution\/} is given by 
\begin{equation}\label{eq:Gibbs}
   \mu_{{\bf J},{\bf h}}(\sigma) =\frac{1}{Z_{\bf J,h}} \exp\Biggl\{\frac{1}{2} \sum_{x,y \in V} J_{xy}\sigma_x\sigma_y + \sum_{x\in V} h_x\sigma_x\Biggr\},
\end{equation}
where ${\bf h} = \{h_x\}_{x\in V}$ is a vector of real numbers whose entry~$h_x$ represents the {\it external field\/} at vertex~$x$,
and ${\bf J} = \{J_{xy}\}_{x,y\in V}$ is a symmetric real matrix whose entry~$J_{xy}$ represents the {\it interaction\/}
between spins at adjacent vertices~$x,y$.  (When there is no edge between $x$ and~$y$, $J_{xy} = 0$.)  The
normalizing factor $Z_{\bf J,h}$ is the {\it partition function}.  Note that we
allow the interactions~$J_{xy}$ to be either positive (ferromagnetic) or negative (antiferromagnetic), and the fields~$h_x$ 
to be either positive (favoring $+1$ spins) or negative (favoring $-1$ spins).  
Setting ${\bf J}=\beta A$, where $\beta>0$ and $A$ is the
adjacency matrix of~$G$, corresponds to the standard ferromagnetic Ising model on~$G$ at inverse temperature~$\beta$.
We emphasize that, while for simplicity we develop our results for the specific case of the Ising model, they
hold equally for any spin system with a constant number of different spins and bounded pairwise interactions (such
as the $q$-state Potts model); see Section~\ref{sec:open} for more detail.

The classical {\it Glauber dynamics\/} for the Ising model picks a random vertex $x\in V$ at each step and resamples the
spin at~$x$ according to the correct conditional distribution given its neighboring spins; this Markov chain converges
to the Gibbs distribution~\eqref{eq:Gibbs} from any initial configuration.  The analogous nonlinear mass action kinetics
is defined by equation~\eqref{eq:massaction} with the following kernel:  Given two configurations $\sigma,\sigma'$, pick a 
random vertex~$x$ and exchange the spins $\sigma_x,\sigma'_x$, obtaining two new configurations $\tau,\tau'$.
The transition probabilities $\kernel{\sigma}{\sigma'}{\tau}{\tau'}$ are chosen to satisfy the detailed
balance condition~\eqref{eq:detbal}, where $\mu= \mu_{{\bf J},{\bf h}}$ is the Gibbs distribution.
We emphasize that, in contrast to Glauber dynamics, here the system is evolving {\it endogenously\/} via pairwise interactions between
configurations, rather than via exogenously applied spin updates.
Our first result shows that this dynamics converges to the Gibbs distribution~\eqref{eq:Gibbs}, where the 
fields~$\bf h$ are determined by the marginal probabilities of the spins at each vertex
in the initial distribution.  
(The fact that the marginals determine a unique vector of fields~$\bf h$ follows from standard convexity arguments;
see, e.g., \cite{ChenEldan22}.)
\begin{theorem}\label{thm:convintro}
Let $p_t$ denote the distribution at time~$t$ for the above mass action kinetics for the Ising model with interactions
${\bf J}$ starting from any initial distribution~$p_0$, and let ${\bf h}$ be the unique choice of external fields such
that the marginal spin probabilities at each vertex $x\in V$ in $\mu_{{\bf J},{\bf h}}$ are the same as those in~$p_0$.  
Then $p_t$ converges to $\mu_{{\bf J},{\bf h}}$ 
as $t\to\infty$.
\end{theorem}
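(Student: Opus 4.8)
The plan is to establish convergence via a Lyapunov function argument combined with a compactness/LaSalle-type principle, which is the standard route for detailed balanced mass action systems. First I would fix the external fields $\bf h$ as in the statement (using the cited convexity fact that marginals determine a unique~$\bf h$) and set $\mu = \mu_{{\bf J},{\bf h}}$; by construction and by the detailed balance condition~\eqref{eq:detbal} that defines the kernel, $\mu$ is a stationary point of the dynamics. The natural Lyapunov functional is the relative entropy (Kullback--Leibler divergence) $H(p_t) = \sum_{\sigma} p_t(\sigma)\log\frac{p_t(\sigma)}{\mu(\sigma)}$. I would show that $H(p_{t+1}) \le H(p_t)$ along the dynamics, with equality only at stationary points. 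The key computation: writing the update~\eqref{eq:massaction} as $p_{t+1}(\tau) = \sum_{\sigma,\sigma',\tau'} p_t(\sigma)p_t(\sigma')\kernel{\sigma}{\sigma'}{\tau}{\tau'}$, one compares $H$ before and after using the detailed balance relation together with convexity of $x\log x$ (equivalently, the log-sum inequality / Jensen applied to the "doubled" chain on pairs). Essentially, the pair distribution $p_t\otimes p_t$ evolves one step under the Markov kernel $\cQ$, which is reversible with respect to $\mu\otimes\mu$, so $H(p_{t+1}\otimes p_{t+1}) \le H((p_t\otimes p_t)\cQ) \le H(p_t\otimes p_t) = 2H(p_t)$; combined with subadditivity/superadditivity relations between $H$ of a pair and $H$ of its marginals one extracts $H(p_{t+1}) \le H(p_t)$. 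This monotonicity, plus the fact that $H$ is bounded below, gives convergence of $H(p_t)$.

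Next I would invoke compactness: the sequence $(p_t)$ lives in the simplex on the finite set $\O = \{\pm1\}^V$, so it has convergent subsequences. Let $p_\infty$ be any subsequential limit. By continuity of the update map $\Phi$ defined by~\eqref{eq:massaction} and the fact that $H(p_{t+1}) - H(p_t) \to 0$, a standard LaSalle invariance argument shows that $H(\Phi(p_\infty)) = H(p_\infty)$, so $p_\infty$ lies in the set where the entropy-dissipation vanishes. I would then identify this set precisely with the stationary points: tracing the equality case in the convexity/log-sum step forces, for every reaction with $\kernel{\sigma}{\sigma'}{\tau}{\tau'} > 0$, the ratio identity $\frac{p_\infty(\sigma)p_\infty(\sigma')}{\mu(\sigma)\mu(\sigma')} = \frac{p_\infty(\tau)p_\infty(\tau')}{\mu(\tau)\mu(\tau')}$, which is exactly the detailed-balance-type condition characterizing equilibria consistent with $\mu$ (cf.\ Proposition~\ref{prop:invrev}).

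Finally I would pin down \emph{which} stationary point is the limit, using conserved quantities. The crucial observation is that each single-site swap move preserves, for every vertex~$x$, the total "mass of $+1$ at $x$" summed appropriately over the pair — more precisely, the per-vertex spin marginals $m_x(p) := \sum_\sigma p(\sigma)\,\ind[\sigma_x = +1]$ are invariant under the dynamics, because exchanging $\sigma_x$ and $\sigma'_x$ between two configurations does not change the multiset of spin values at~$x$. Hence $m_x(p_t) = m_x(p_0)$ for all $t$ and all $x$, so $m_x(p_\infty) = m_x(p_0)$. But $\bf h$ was chosen so that $m_x(\mu_{{\bf J},{\bf h}}) = m_x(p_0)$, and among the equilibria of the form characterized above, the marginals determine the state uniquely (again by the convexity argument of~\cite{ChenEldan22}, or by showing the equilibrium set consistent with a full-support component is a single point once the vertex marginals are fixed). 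Therefore every subsequential limit equals $\mu_{{\bf J},{\bf h}}$, and since the simplex is compact this forces $p_t \to \mu_{{\bf J},{\bf h}}$.

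The main obstacle I anticipate is the LaSalle step, specifically verifying that the entropy-dissipation equality set coincides exactly with the intended equilibrium — one must handle the case where $p_0$ (and hence $p_\infty$) is not of full support, so that the kernel $\cQ$ restricted to the support is not irreducible and the "all equilibria are detailed balanced" machinery must be applied componentwise. Carefully matching the conserved vertex-marginals against the uniqueness of $\mu_{{\bf J},{\bf h}}$ in this degenerate setting is the delicate point; everything else is a routine instance of the Lyapunov/compactness scheme.
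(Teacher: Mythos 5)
Your entropy/LaSalle machinery is essentially the paper's own general convergence argument: the two-step decomposition (pair distribution evolving under the kernel $\cQ$, which preserves $\mu\otimes\mu$, followed by marginalization, with superadditivity of relative entropy) is exactly Lemma~\ref{le:relent}, and the compactness-plus-vanishing-dissipation step is Theorem~\ref{th:genconv}. The conservation of vertex marginals is Lemma~\ref{lem:conser}. So far, so good.

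The genuine gap is the step you yourself flag and then defer: ruling out subsequential limits on the boundary of the simplex. Your LaSalle step only shows that any subsequential limit $p_\infty$ is a stationary point; to conclude $p_\infty=\mu_{\bf J,h}$ you need to know that the \emph{only} stationary point with the given (nondegenerate) marginals is the full-support Gibbs measure, i.e., you must exclude boundary equilibria with those marginals. The references you invoke do not do this: the convexity/uniqueness statement of \cite{ChenEldan22} is about uniqueness of $\bf h$ \emph{among Gibbs measures with finite fields}, and ``applying the detailed-balance machinery componentwise'' is precisely the unresolved issue --- in general mass action systems this boundary behavior is the obstruction behind the Global Attractor Conjecture, so it cannot be dismissed as routine. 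The paper treats this as the crux of the whole theorem: it proves a quantitative irreducibility property (Lemma~\ref{lem:irred}), via the binary derivation tree, a pointwise lower bound of each collision kernel by $\d_{\bf J}$ times the non-interacting (${\bf J}=0$) kernel, and the complete-fragmentation/coupon-collecting analysis, which guarantees the trajectory stays uniformly away from the boundary; full support of the limit then lets one rerun the entropy argument with the limit itself as reference measure (and the characterization of full-support stationary points as Gibbs measures, Lemma~\ref{station}, plus conserved marginals identifies it). Your route could in principle avoid Lemma~\ref{lem:irred} by instead proving directly that any stationary $p$ with nondegenerate marginals has full support --- e.g., the equality analysis forces $p\otimes p$ to be proportional to $\mu\otimes\mu$ on each communicating class, so the support of a stationary $p$ is closed under recombination and hence a product set $\prod_x S_x$, and nondegenerate marginals then force $S_x=\{\pm1\}$ --- but you have not carried out this argument (nor the paper's), so as written the identification of the limit, which is the heart of Theorem~\ref{thm:convintro}, is missing. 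Two smaller omissions: you never verify the positive-diagonal property~\eqref{posQ} (needed for the equality case of the dissipation, and true here since the identity exchange has positive probability), nor the symmetrization~\eqref{pq22sym} needed for the Glauber kernel to satisfy~\eqref{symQ}.
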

Note that, unlike the standard Glauber dynamics, the nonlinear dynamics has conserved quantities---namely, the marginal probabilities
of the spins at each vertex---and the values of these conserved quantities determine which of the family of
stationary points the dynamics converges to.  This phenomenon is typical in mass action kinetics.
The key to our proof of Theorem~\ref{thm:convintro}, which is based on an earlier argument in~\cite{Sinetal} (see
Remark~\ref{rem:qds}), is establishing an {\it irreducibility\/} property, namely that along any trajectory, 
the probability of any configuration eventually remains uniformly bounded away from zero.

We pause to briefly mention some features of mass action kinetics that make its analysis much more complex
than that of Glauber dynamics, and which explain the lack of quantitative convergence results.  
First, as noted above, there are in general multiple equilibrium points, which are characterized by conserved quantities. 
Second, in contrast to the linear case, the total variation distance to stationarity is {\it not\/} monotonically decreasing (see \cite[Remark 2.7]{CLL})
and there are no simple coupling arguments to rely upon. 
Finally, the nonlinearity means that we do not have at our disposal a spectral theory and other functional analysis tools
that have proved so powerful in the analysis of Markov chains.
As usual in kinetic theory, a natural way to study convergence to stationarity here is to use relative entropy,  
which provides a monotonically decreasing functional; however, quantitative analysis of this quantity is a notoriously
difficult problem in the nonlinear setting, which has so far been solved only in the non-interacting case
(genetic recombination)~\cite{CapSin,CapPar}.     

Our main result establishes tight bounds on the rate of convergence for this nonlinear dynamics in the so-called
``high-temperature" regime, when the interactions are non-trivial but relatively weak.  Specifically, the condition we require is
that $\max_{x\in V}\sum_{y\in V} |J_{xy}| \le\delta_0$ for some absolute constant~$\delta_0>0$, i.e., the
aggregated strength of all interactions at any given vertex is not too large.  
This condition mirrors the standard Dobrushin condition for Glauber dynamics,
which gives a non-trivial sufficient condition for rapid mixing (see, e.g.,~\cite{Weitz}).
We state this result in the following theorem.
\begin{theorem}\label{thm:main}
In the scenario of Theorem~\ref{thm:convintro}, with the additional assumption that 
$\max_{x\in V}\sum_{y\in V} |J_{xy}| \le\delta_0$ for an absolute constant~$\delta_0>0$,
the rate of convergence of  $p_t$ to $\mu_{{\bf J},{\bf h}}$ is given by 
\begin{equation}
   \Vert p_t - \mu_{{\bf J},{\bf h}}\Vert_{\rm TV} \le Cne^{-ct/n}
\end{equation}
for absolute constants $C,c>0$, where $\Vert\cdot\Vert_{\rm TV}$ denotes total variation distance. 
Thus in particular the time required to achieve variation distance $\varepsilon$ is $t=O(n\log(n/\varepsilon))$.
\end{theorem}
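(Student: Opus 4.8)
The plan is to turn the convergence estimate into a statement about a subcritical branching process, by unfolding the recursion~\eqref{eq:massaction} into an explicit sampler for $p_t$ and then running an information-percolation argument on that sampler. First I would construct a random configuration $X_t$ with law $p_t$ recursively: pick a uniformly random vertex, draw an independent partner configuration $Y_{t-1}$ with law $p_{t-1}$ (itself built recursively), and update the chosen spin according to the kernel $\cQ$; unwinding, $X_t$ becomes a deterministic function of a depth-$t$ binary tree whose leaves are i.i.d.\ samples from $p_0$, together with the vertex choices and the internal coins of $\cQ$. In parallel I would run the \emph{stationary} version $X_t^{\mu}$ of the very same construction, in which every leaf and every partner is instead a fresh sample from $\mu=\mu_{{\bf J},{\bf h}}$; since $\mu$ satisfies detailed balance~\eqref{eq:detbal} it is a stationary point of~\eqref{eq:massaction}, so $X_t^{\mu}$ has law exactly $\mu$. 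The two constructions are coupled by sharing the vertex choices, the $\cQ$-coins, and all auxiliary randomness, differing only in the spin values fed in at the leaves and partners.

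The heart of the matter is to show that, with probability at least $1-Cne^{-ct/n}$, the value $X_{t,v}$ for every $v\in V$ can be computed from the vertex choices and coins \emph{without ever reading any leaf value} — on that event $X_t=X_t^{\mu}$, and hence by the coupling inequality $\Vert p_t-\mu_{{\bf J},{\bf h}}\Vert_{\rm TV}\le\Prob[\text{some leaf is read}]$. To this end I would define, for each vertex, its \emph{history}: a space-time region whose spin values suffice to determine $X_{t,v}$. The key local estimate, obtained by coupling the kernel $\cQ$ with suitable Bernoulli variables, is that when a vertex $x$ is updated the new spin is \emph{decided} — independent of $x$'s neighbours and of the partner — with probability $1-O\bigl(\sum_y|J_{xy}|\bigr)$, and that otherwise it depends only on the current spins of a random set of neighbours together with a \emph{bounded-size fragment} of the partner configuration (just $x$ and its relevant neighbours, never all of the partner). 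This is where the Erd\H{o}s-R\'enyi coupling enters: the set of edges that are ever ``relevant'' forms a sparse random graph of expected degree at most $\delta_0<1$ at each vertex, so for $\delta_0$ small the history is dominated by a subcritical branching process running backward in time, equipped with a ``fragmentation'' mechanism that keeps the relevant piece of every ancestor configuration of size $O(1)$ (indeed $O(\log n)$ over the whole run, w.h.p.).

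Granting this structure, the stated bound is a first-moment computation on the branching/random-graph process. Going backward from time $t$, a lineage is killed with probability $1-\varepsilon$ at each update of its current vertex — an event of probability $1/n$ per step — and otherwise produces only subcritically many offspring, so the probability that the history seeded at a fixed vertex survives back to time $0$, i.e.\ that some leaf is read on its account, is at most $e^{-ct/n}$; a union bound over the $n$ vertices of $V$ that seed the history at time $t$ gives $Cne^{-ct/n}$, and hence the time to reach variation distance $\varepsilon$ is $O(n\log(n/\varepsilon))$.

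I expect the main obstacle to be controlling the branching process once partners are allowed. In linear Glauber dynamics the history branches only into $O(1)$ spatial neighbours, whereas here a single swap with a partner $Y_s$ of law $p_s$ threatens to pull in an entire fresh configuration — $n$ spins, each carrying its own $s$-step history — which would make the process hopelessly supercritical. Proving that only a bounded fragment of each $Y_s$ is ever needed, that these fragments do not proliferate through the recursion, and that the ``relevant'' edges genuinely behave like a subcritical Erd\H{o}s-R\'enyi graph \emph{uniformly} over all interaction patterns allowed by $\max_x\sum_y|J_{xy}|\le\delta_0$, is the delicate technical core; a secondary difficulty is checking that the coupling with $X^{\mu}$ is legitimate — that on the grounding event the two constructions really do output the same configuration despite the nonlinearity — which is precisely the place where it matters that $\mu$ is an \emph{exact} fixed point of~\eqref{eq:massaction}, not merely an approximate one.
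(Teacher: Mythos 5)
Your overall architecture---unfolding the dynamics into a depth-$t$ derivation tree with i.i.d.\ leaves, running an information-percolation argument backward from the root, coupling the kernel with a sparse random graph, and dominating the resulting history by a subcritical branching-plus-fragmentation process---is the same as the paper's. But your central reduction contains a genuine gap. You claim that when a vertex $x$ is updated the new spin is \emph{decided}, independently of the partner, with probability $1-O\bigl(\sum_y|J_{xy}|\bigr)$, and you then bound the total variation distance by the probability that some leaf value is ever read. For this dynamics the kernel never generates a fresh spin value: an update at $x$ only chooses whether to keep $\sigma_x$ or take $\sigma'_x$ from the partner (see \eqref{pq22}). What becomes oblivious when the interaction is weak is the \emph{swap decision} (a fair coin whenever the star graph in the coupling of Lemma~\ref{lem:deccax} is empty), not the spin value itself. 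Consequently every spin at the root is copied from some leaf, your grounding event (``no leaf is ever read'') has probability zero, and the inequality $\Vert p_t-\mu_{{\bf J},{\bf h}}\Vert_{\rm TV}\le\Prob[\text{some leaf is read}]$ yields nothing. This confuses the nonlinear swap dynamics with linear Glauber dynamics, where an oblivious update really does resample the spin from scratch.

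The missing idea is the criterion for discarding a history once it has been whittled down to a \emph{single site}: at that point the only information needed from the corresponding leaf is its single-site marginal, and these marginals are identical for $p_0$ and $\mu_{{\bf J},{\bf h}}$ precisely because $\bf h$ is chosen to match them (and they are conserved along the dynamics, Lemma~\ref{lem:conser}). This is the killing rule \eqref{psi01} in the paper's coupon-collecting-plus-noise process, and it is what makes the backward process die out rather than merely stay bounded. Note that any argument which, like yours, never invokes the matching of marginals in the reduction step cannot be completed: the marginals are conserved quantities, and without the matching the distance does not tend to zero at all. A secondary issue is your first-moment bound: when a nonempty neighbourhood $\cV_x$ is drawn, the relevant set is injected into \emph{both} parent configurations, so fragments duplicate rather than simply branch locally; the paper handles this by merging everything into a single ``master evolution'' dominated by one subcritical continuous-time branching process (Lemma~\ref{fraglemma2x}) before extracting the $Cne^{-ct/n}$ bound, and some such device is needed in place of your per-vertex union bound.
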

We note that this upper bound on convergence time is (up to constants) the same as for the analogous version
of the genetic recombination model discussed above (where just a single allele is exchanged between the strings at each step)~\cite{Sinetal2},
and is therefore also tight by virtue of the lower bound in the same paper.
That model is equivalent to the trivial case of the Ising model in which there are no interactions ($J_{xy}=0$ for all $x,y$), 
with the $h_x$ determined by the marginal probabilities at each allele~$x$ (and the spins $\pm 1$ identified with the bits $1,0$).
However, as we explain in Section~\ref{subsec:techniques} below, the correlations present in the Ising model
make the analysis much more challenging.

We also consider a ``block" version of the nonlinear dynamics, in which $\sigma,\sigma'$ exchange their spins at a
random subset $\Lambda\subseteq  V$ of vertices (rather than just at a single randomly chosen vertex).  The 
kernel $\kernel{\sigma}{\sigma'}{\tau}{\tau'}$ is again determined by the detailed balance condition~\eqref{eq:detbal},
and the basic convergence result in Theorem~\ref{thm:convintro} still holds. 
Under the same Dobrushin-type condition on the interactions as in Theorem~\ref{thm:main}, we again obtain
a tight bound on the convergence rate:
\begin{theorem}\label{thm:main2}
With the same notation and assumptions as in Theorem~\ref{thm:main}, the variation distance of the
block version of the mass action kinetics for the Ising model satisfies
\begin{equation}
   \Vert p_t - \mu_{{\bf J},{\bf h}}\Vert_{\rm TV} \le Cn^2e^{-ct}
\end{equation}
for absolute constants $C,c>0$.
Thus in particular the time required to achieve variation distance $\varepsilon$ is $t=O(\log(n/\varepsilon))$.
\end{theorem}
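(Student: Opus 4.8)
My plan is to follow the strategy developed for Theorem~\ref{thm:main}, taking advantage of the fact that a single block move fragments a configuration much more efficiently than a single-site move; this is precisely what upgrades the mixing time from $O(n\log n)$ to $O(\log n)$. The starting point, as for Theorems~\ref{thm:convintro} and~\ref{thm:main}, is the branching representation of the dynamics~\eqref{eq:massaction}: a sample from $p_t$ is the configuration at the root of a random binary tree of depth~$t$ whose $2^t$ leaves are i.i.d.\ copies of $p_0$ and whose internal nodes each perform one reaction (draw a random block $\Lambda$, propose the swap on $\Lambda$ of the two incoming configurations, then apply the detailed-balance correction). Running the same construction with i.i.d.\ $\mu_{{\bf J},{\bf h}}$ leaves produces, by stationarity of $\mu_{{\bf J},{\bf h}}$ under~\eqref{eq:massaction}, a root configuration that is \emph{exactly} $\mu_{{\bf J},{\bf h}}$-distributed. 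It therefore suffices to construct a coupling of the two trees---with the same tree shape, the same blocks $\Lambda$ at every node, and monotonically coupled correction coins---under which the two root configurations coincide with probability at least $1-Cn^2e^{-ct}$.

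The core is a backward information-percolation analysis run for each target vertex $x\in V$ separately. Exploring from the root toward the leaves, I track the set of (vertex, node) pairs on which the value at $x$ at the root depends. At a node with block $\Lambda$ a tracked vertex $v$ is copied from exactly one child, and in addition, for each edge $\{v,w\}$, the detailed-balance correction can make the outcome at $v$ depend on the value at $w$ in \emph{both} children, but only with probability $O(|J_{vw}|)$---a bound valid irrespective of the (unknown) configurations at that node, since it follows only from $|\sigma_w-\sigma'_w|\le 2$ together with the Lipschitz bound on the log-acceptance ratio. This is exactly where the coupling with Erd\H{o}s--R\'enyi random graphs enters: the set of neighbours pulled in by corrections at a given reaction is dominated by the connected component of $v$ in a random graph with edge probabilities $O(|J_{vw}|)$, which is \emph{subcritical} because $\sum_w|J_{vw}|\le\delta_0$. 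Hence the cluster branches at total rate $O(\delta_0)$ per tracked vertex per step, each branch behaves like a subcritical branching process, and over $t=O(\log n)$ steps the cluster has size $O(\log n)$ with the required probability.

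The fragmentation part controls where the cluster meets the leaves. The ``main'' trace of $x$ (ignoring corrections) makes an independent fair left/right choice at each level, so two distinct main traces coincide through level~$t$ only with probability $2^{-t}$; the extra branches created by corrections are born deeper in the tree and collide with still smaller probability. Taking a union bound over the $O(n^2)$ pairs of vertices, with probability at least $1-Cn^2e^{-ct}$ the following holds simultaneously for every $x$: the cluster of $x$ has size $O(\log n)$, it reaches each leaf through at most one tracked vertex, and two distinct vertices never have clusters reaching a common leaf. On this good event the leaf coupling can be realized leaf by leaf: a leaf reached only through a single tracked vertex $v$ needs only its $p_0$-configuration and its $\mu_{{\bf J},{\bf h}}$-configuration to agree at $v$, which is possible because by the choice of ${\bf h}$ the single-vertex marginals of $p_0$ and $\mu_{{\bf J},{\bf h}}$ coincide; leaves reached by no cluster are irrelevant. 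Propagating agreement back up the two trees through the shared blocks and coupled correction coins forces the two root configurations to agree, giving $\Vert p_t-\mu_{{\bf J},{\bf h}}\Vert_{\rm TV}\le Cn^2e^{-ct}$ and hence mixing in $O(\log(n/\varepsilon))$ steps.

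The main obstacle is the interplay of the two processes. Corrections genuinely occur at a constant rate, so the clusters are not trivial, and the argument must show that the additional branches they create are (i)~few, (ii)~thin at the leaves---so that the leaf coupling, which can enforce agreement only on \emph{single} vertices since the pair marginals of $p_0$ and $\mu_{{\bf J},{\bf h}}$ do not match, stays feasible---and (iii)~create disagreement only through configuration mismatches that are themselves absorbed by the subcritical cluster structure. The block geometry is what makes (i)--(ii) work: every vertex is touched at a constant rate, so the fragmentation of $V$ into singletons completes in $O(\log n)$ steps with exponentially small failure probability, which simultaneously speeds up mixing and turns the union bound into one over pairs of vertices, explaining the $n^2$ prefactor.
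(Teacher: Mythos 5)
Your overall architecture coincides with the paper's: the derivation tree, a backward (root-to-leaves) information-percolation exploration, domination of the dependence structure by a subcritical inhomogeneous Erd\H{o}s--R\'enyi graph under $\max_x\sum_y|J_{xy}|\le\delta_0$, fragmentation by fair coin flips, a union bound over pairs of vertices producing the $n^2$ prefactor, and the observation that single-site dependencies are harmless because $p_0$ and $\mu_{{\bf J},{\bf h}}$ share their one-dimensional marginals. Your bookkeeping per root vertex (clusters that must not share leaves) is essentially the dual of the paper's per-leaf ``fragmentation plus noise'' process $\cF_t$ and its extinction estimate (Lemma~\ref{fraglemma}); that reorganization is fine.

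The genuine gap is the step you dispose of in one sentence: that at each reaction the set of additional vertices whose spins (in both children) the outcome at $v$ depends on is dominated, \emph{uniformly in the unknown configurations} $(\si,\si')$, by the component of $v$ in a random graph with edge probabilities $O(|J_{vw}|)$. Your justification (``$|\si_w-\si'_w|\le 2$ plus a Lipschitz bound on the log-acceptance ratio'') does not apply to the block dynamics: there is no Metropolis filter here; the block $\L$ is drawn directly from the Gibbs measure $\g(\cdot\mid\si,\si')$ of \eqref{gammala}, which by Lemma~\ref{lem:gamuJ} is itself an Ising measure on subsets with interactions $2J_{xy}\si_x\si_y\ind_{x\in D(\si,\si')}\ind_{y\in D(\si,\si')}$, so the coordinates of $\L$ are jointly correlated and \emph{which} vertices influence $\ind_{v\in\L}$ is itself configuration-dependent. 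Converting this into a mixture over a configuration-independent random graph law $\nu_{\bf J}$, with the vertices outside the graph receiving i.i.d.\ fair coins and with product structure over connected components, is exactly the content of the paper's Lemma~\ref{lem:decca}; its proof requires a high-temperature-type expansion arranged to have nonnegative weights, followed by a monotone Markov-chain coupling establishing stochastic domination of the $(\si,\si')$-dependent graph law by the Erd\H{o}s--R\'enyi measure with edge probabilities $1-e^{-4|J_{xy}|}$. A naive expansion produces signed, configuration-dependent coefficients and would destroy your coupling. Without this lemma (or an equivalent), the backward exploration cannot be compared to a branching process whose law is independent of the configurations, so the subcriticality estimate, and hence the bound $Cn^2e^{-ct}$, does not get off the ground. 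A secondary point: your collision estimates (cluster size $O(\log n)$, leaf collisions $2^{-t}$ per pair) need to be run jointly with the branching noise, as in the proof of Lemma~\ref{fraglemma} (two processes surviving to an intermediate time $s$, one then maintaining size at least $2$), but that part is routine once the domination lemma is available.
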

Note that convergence here is exponentially faster than in the single-vertex version of Theorem~\ref{thm:main},
reflecting the fact that this version is non-local and changes large portions of the configurations at each step.
Again, the bound of Theorem~\ref{thm:main2} matches the lower bound for zero interaction~\cite{Sinetal2}.

We stress that the goal of this paper is {\it not\/} to design an efficient algorithm for sampling configurations 
of the Ising model.  Such algorithms, based on standard linear Glauber dynamics,
are already known throughout the high-temperature regime.  Rather, our goal is to analyze the rate of
convergence of a natural {\it nonlinear\/} dynamics, for the first time in a model with correlations.  We view
this as a first step towards a better understanding of such dynamics and the techniques needed to understand
them; in addition to their inherent interest, these techniques may lead to algorithmic applications in future.
Our work can be viewed as an extension of the successful application of a TCS lens in the
analysis of mixing times of linear dynamics (Markov chains), which, as is well known, has seen both mathematical and 
algorithmic applications over many years.
However, we should point out that our convergence analysis in Theorems~\ref{thm:main} and~\ref{thm:main2} actually does yield
polynomial time sampling algorithms based on simulation of the respective nonlinear dynamics.  We outline these
algorithms, together with some associated open questions, in Section~\ref{sec:open} at the end of the paper.

We also point out a further interesting algorithmic aspect of our results.  Recall that our nonlinear processes
sample from an Ising Gibbs measure~$\mu_{{\bf J},{\bf h}}$, where the fields~$\bf h$ are determined {\it implicitly\/}
by the marginals at each site.  It is these marginals (not the fields) that are specified by the initial distribution~$p_0$. 
(As far as we are aware, all existing sampling algorithms for the Ising model require the specification of the fields~$\bf h$
rather than the marginals.)  Thus our processes can be viewed as a novel, direct method for constructing a maximum entropy
distribution subject to these marginal constraints, an important problem in its own right (see, e.g., \cite{SV13}).   
Additionally, our processes can be used to learn the fields~$\bf h$ corresponding to given marginals, an inverse problem
that is also of independent interest (see \cite{NZB17} for a survey of such inverse problems): given samples from~$\mu_{{\bf J},{\bf h}}$
produced by the nonlinear dynamics, standard methods can be used to infer the field vector~$\bf h$.

\subsection{Techniques}\label{subsec:techniques}
We begin by describing the earlier approach of~\cite{Sinetal2} to analyzing the rate of convergence of the simpler
population genetics dynamics, which corresponds to the trivial case of the Ising model with no interactions (${\bf J}=0$).  Since
the equilibrium distribution here consists of independent bits, the analysis is relatively straightforward once one
observes the following insight.  The derivation of
an individual~$\sigma$ at time~$t$ can be viewed as a binary tree going backwards in time, in which each individual inherits
a random subset of its bits from each of its two parents according to the random crossover subset~$\Lambda$.  
We may therefore follow the derivation of the $n$ bits in~$\sigma$ back in time, until each of these bits is derived
from a {\it distinct\/} individual at time~0.  At that point we can deduce that the bits of~$\sigma$ are independently
sampled from their respective marginal distributions, so $\sigma$ is in equilibrium.  The analysis therefore reduces to the
question of how many steps are needed until all the~$n$ bits are separated, or ``fragmented", under the repeated 
action of partition by the random crossover subset~$\Lambda$, which in turn is a straightforward combinatorial calculation.
(See Section~\ref{sec:fragment} for a more detailed description of this process.)

In the case where correlations are present in the equilibrium distribution, as in the Ising model, the above analysis
breaks down because it is no longer sufficient to consider only fragmentation of the bits: indeed, the process must
involve not only the breakdown of correlations in the initial distribution, but also, crucially, the {\it creation\/} of the correct
equilibrium correlations as mandated by the Gibbs distribution~\eqref{eq:Gibbs}.  Moreover, the process by which an
individual inherits bits from its parents is no longer independent of the parents, but dictated by a complex function of
both parents.  

To account for this, we appeal to
the {\it information percolation\/} framework developed by Lubetzky and Sly~\cite{lubetzky2017universality} in the
context of Glauber dynamics for the Ising model.  This framework suggests that we keep track of a ``dependence cluster"
going back in time, which records the neighboring spins that have influenced each spin in our current configuration.  
In the linear setting of~\cite{lubetzky2017universality}, it can be shown (under a similar high-temperature
assumption to ours) that this cluster is dominated by a subcritical branching process and thus will die out with large probability:
the equilibrium correlations are then implicitly encoded by the history of this process.  The time until the process dies out 
gives a bound on the mixing time.

In our nonlinear setting, the dependence clusters are no longer describable in terms of a simple branching process, but
rather by a new type of process that combines branching with fragmentation, a process we refer to as ``fragmentation plus
noise."  The first main ingredient of our analysis is the precise construction of such a process and the proof that it encodes the
complex dependence structure of the nonlinear dynamics. The second main ingredient is the proof that, under the 
high-temperature assumption $\max_{x\in V}\sum_{y\in V} |J_{xy}| \le\delta_0$, the fragmentation plus noise process
has a subcritical behavior and therefore dies out with large probability on a suitable time scale.   To establish this
latter fact, we introduce a non-standard form of  ``high-temperature expansion" for the dependence structure obtained
by a coupling with non-uniform Erd\H{o}s-R\'enyi random graphs.
We refer the reader to Section~\ref{subsec:sketch} for a more technical high-level description of these ideas.

\subsection{Related work}\label{subsec:related}
The framework of mass action kinetics was first introduced in the chemical reaction networks literature, most
notably in the landmark paper of Horn and Jackson~\cite{HornJackson}.  The dynamics defined in~\eqref{eq:massaction}
above, generalized to allow interactions between arbitrary numbers of types, models chemical processes in an
obvious way.  The recent monograph of Feinberg~\cite{FeinbergBook} describes the state of the art in the area.
While almost nothing is known about rates of convergence, much
effort has been devoted to a proof of the Global Attractor Conjecture mentioned earlier; indeed, the original
paper~\cite{HornJackson} contained this as a theorem, but the authors later retracted it and restated it as a
conjecture~\cite{Horn74}.
Since then there have been numerous attempts at a proof, including recent papers that
handle various special cases, typically based on rather severe structural conditions on the set of reactions
(such as forming a single connected component)---see, e.g., \cite{CNP,Pantea, GMS, AGHMR, ACKN, DZ}.  
The Conjecture remains open.

Several other important classes of dynamics fit into the mass action framework, the most classical of which
is Boltzmann's model of an ideal gas~\cite{Boltz}, where the types are momentum values of the molecules and interactions
correspond to (randomized) collisions between pairs of them\footnote{Here the set of types is continuous
rather than discrete, as in our setting.}.  In this case the dynamics~\eqref{eq:massaction} is the so-called
{\it Boltzmann equation}, which remains a major object of study in mathematical physics today (see~\cite{Villanisurvey}
for a survey).
Another famous example is Hardy-Weinberg recombination in population genetics, as described earlier.
The same dynamics can also be used to model the ``recombination" step in genetic algorithms~\cite{goldberg,Mitchell},
where two (or more) candidate solutions to a combinatorial optimization problem are combined to produce
new solutions; here the mass action dynamics is typically combined with a ``selection" operator that weeds
out less desirable solutions.  The viewpoint taken in this paper, where mass action kinetics are viewed as
a nonlinear version of the Markov chain Monte Carlo method for sampling combinatorial structures
from a given distribution, was first proposed in~\cite{Sinetal} and later explored in a different context in~\cite{SV13}.
Negative results on the worst-case computational complexity of simulating mass action kinetics were derived in~\cite{ARV}.
Mass action kinetics resembles certain more refined MCMC approaches that are used in practice for numerical simulations
of spin systems such as the Ising model, including replica Monte Carlo~\cite{SW86},
parallel tempering~\cite{tempering} and---most closely---cluster Monte Carlo~\cite{Houdayer}; these are in 
fact linear dynamics, but because they update a set of configurations in a dependent way, they can be
viewed as approximate finite realizations of nonlinear dynamics.

As discussed earlier, there are almost no quantitative results on the rate of convergence of mass action kinetics
in combinatorial settings, the main exception being genetic recombination~\cite{Sinetal2,CapSin,CapPar}.  There is a vast
and still evolving
literature on convergence rates of the Boltzmann equation (see, e.g.,~\cite{kac1956foundations,Villanisurvey,mischler2013kac,DGR}), 
which however is tailored to the specifics of that model and does not seem to generalize.  

Finally we mention the more general class of so-called ``nonlinear Markov chains", which (in discrete time) are stochastic
processes~$(X_t)$ in which the distribution of~$X_t$ depends not only on the previous state~$X_{t-1}$ but also (in an arbitrarily
complex way) on the  {\it distribution\/} of~$X_{t-1}$.  (Mass action kinetics as discussed above
is a particular example where the dependence is quadratic.)  This class was formally introduced by McKean~\cite{mckean1966class},
who studied the continuous time version and its deep relationships to nonlinear parabolic equations, including the Boltzmann
equation, and other kinetic models.  These nonlinear systems arise naturally as the limit of large finite mean-field particle
systems through the so-called ``propagation of chaos"   \cite{kac1956foundations,kolokoltsov2010nonlinear,sznitman1991topics,villani2002review,carrillo2003kinetic,Villanietal,cattiaux2008probabilistic,mischler2013kac,budhiraja2015local,ChaintronDiez22-1,ChaintronDiez22-2}.
Several works have been devoted to the development of nonlinear Markov chain Monte Carlo methods \cite{DelMoraletal,muzychka2011class,erbar2020entropic,butkovsky2014ergodic,shchegolev2022convergence,vuckovicnonlinear}, 
but in contrast with the classical (linear) Markov chain framework, quantitative results on convergence to stationarity 
are very limited and difficult to obtain.

\subsection{Organization of the paper}\label{subsec:org}
In Section~\ref{sec:prelim} we formally define both of our nonlinear dynamics and establish some of their
basic properties, including a proof of a general convergence result, Theorem~\ref{th:genconv}, which we then use to prove
Theorem~\ref{thm:convintro}. In Section~\ref{sec:nonlinearbd}
we proceed with our quantitative analysis of the convergence rate for the nonlinear block dynamics, culminating in a 
proof of Theorem~\ref{thm:main2}; prior to embarking on the details, we provide in Section~\ref{subsec:sketch}
a more technical, high-level sketch of our approach.  In Section~\ref{sec:Glauber} we apply a similar approach, though 
substantially different in detail, to analyze the nonlinear Glauber dynamics and prove Theorem~\ref{thm:main}.
We conclude with some additional observations, extensions and open problems in Section~\ref{sec:open}.


\section{Preliminaries}\label{sec:prelim}

\subsection{The Ising model}
We recall from the introduction the definition~\eqref{eq:Gibbs} of the Ising model via its Gibbs distribution~$\mu_{\bf J,h}$.  Note that we
allow arbitrary edge-dependent interactions ${\bf J} = \{J_{xy}\}_{x,y\in V}$ and arbitrary external
fields ${\bf h} = \{h_x\}_{x\in V}$.  When all the external fields are zero we write simply $\mu_{\bf J}$.
We identify the set of vertices (or sites)~$V$ with~$[n]$, and denote
the set of spin configurations by $\O=\{\pm 1\}^{[n]}$.

\begin{remark}\label{rem:bc}
For simplicity we have taken a model with no boundary conditions. However, there is no difficulty in extending our analysis and
results to the case of a Gibbs measure with arbitrary boundary conditions, i.e., when the spins in some subset $V_0\subseteq V$ are
pinned to given values~$\pm 1$. This generalization can be easily achieved by taking  limits $h_x\to\pm\infty$ for all $x\in V_0$ that
are pinned to the values~$\pm 1$, respectively.
\end{remark}

\subsection{The nonlinear dynamics}
Let $\cP(\O)$ denote the set of probability measures on $\O$, and $\cP_+(\O)\subseteq\cP(\O)$ 
the set of measures with full support, i.e.\ $p\in\cP_+(\O)$ iff $p(\o)>0$ for all $\o\in\O$.
We consider nonlinear (mass action) dynamics $\{p_t\}_{t\ge 0}$ on the set of types~$\O$ as defined in~\eqref{eq:massaction} of the
introduction, with some kernel~$\cQ$
satisfying the symmetry $\kernel{\sigma}{\sigma'}{\tau}{\tau'} = \kernel{\sigma'}{\sigma}{\tau'}{\tau}$.
We shall also assume that $\cQ$ satisfies the reversibility condition
\begin{equation}
\label{cw2qq}
 \mu(\sigma) \mu(\sigma') \kernel{\sigma}{\sigma'}{\tau}{\tau'} = \mu(\tau) \mu(\tau') \kernel{\tau}{\tau'}{\sigma}{\sigma'}\,, \qquad \forall\, \sigma,\sigma',\tau,\tau',
\end{equation}
for some $\mu\in\cP_+(\O)$. 

We view this dynamics as a dynamical system $p\mapsto T_t(p)$, where $T_0(p)=p\in\cP(\O)$ is the initial distribution,
$T_t(p)\in\cP(\O)$ is the distribution after $t$ steps, and $T_t(p) = T_{t-1}(p) \circ T_{t-1}(p)$.
Here one step of the dynamics is defined, as in~\eqref{eq:massaction}, by
\begin{equation}\label{cwnew}
p \mapsto p\circ p := \sum_{\si,\si'\!,\tau'}p(\si)p(\si')\kernel{\sigma}{\sigma'}{\tau}{\tau'}.
\end{equation}
It will be convenient later to write~\eqref{cwnew} in the equivalent form
\begin{equation}
\label{cw2}
   p \mapsto p\circ p := \sum_{\si,\si'}p(\si)p(\si')Q(\cdot\tc\si,\si'),
\end{equation}
where, for fixed $\si,\si'\in\O$,  the distribution $Q(\cdot\tc\si,\si')\in\cP(\O)$ is defined by
\begin{equation}
\label{cw2q}
Q(\t\tc\si,\si') := \sum_{\t'\in\O}\kernel{\sigma}{\sigma'}{\tau}{\tau'}.
\end{equation}
In this paper we take $\mu=\mu_{\bf J,h}$ as the Ising measure~\eqref{eq:Gibbs} and consider two natural choices of the kernel $\cQ$
that satisfy~\eqref{cw2qq}, which we now describe.

\subsubsection{Nonlinear block dynamics} The first model, which we refer to as the {\em nonlinear block dynamics}, corresponds
to interactions in which a pair of configurations $(\sigma,\sigma')$ exchange their spins at an arbitrary, randomly chosen subset
$\L\subseteq  [n]$ of sites, i.e.,
\begin{align}\label{eq:eqclass}
(\si,\si') \mapsto (\si'_\L\si_{\L^c},\si_\L\si'_{\L^c}),
\end{align}
where $\si_\L\si'_{\L^c}$ denotes the element of $\Omega$ with entries $\si_x$ for $x \in \L$ and $\si'_x$ for $x \in \L^c=[n]\setminus \L$. 
Here, to ensure reversibility, the set $\L$ is chosen with probability proportional to $ \mu(\si'_\L\si_{\L^c})\mu(\si_\L\si'_{\L^c})$. 
Thus the associated kernel is defined as 
\begin{equation}
\label{pq20}
\kernelJ{\si}{\si'}{\t}{\t'} =
\frac{\sum_{\L\subseteq V}\mu(\si'_\L\si_{\L^c})\mu(\si_\L\si'_{\L^c})\ind_{\t=\si'_\L\si_{\L^c}}\ind_{\t'=\si_\L\si'_{\L^c}}}
{\sum_{A\subseteq V}\mu(\si'_A\si_{A^c})\mu(\si_A\si'_{A^c})}.
\end{equation}
Note that transitions of the form \eqref{eq:eqclass} can only produce pairs $(\t,\t')$ that belong to the equivalence class
\begin{align}\label{eq:eqclass2}
\cC(\si,\si')=\left\{(\si'_\L\si_{\L^c},\si_\L\si'_{\L^c}),\;\L\subseteq V\right\}.
\end{align}
Thus the kernel~\eqref{pq20} defines a (linear) Markov chain on the pair space $\O\times\O$ that is in general not irreducible,
and whose communicating classes are precisely~$\cC(\si,\si')$.
Note also that the kernel $\cQ_{\bf J}$ depends on $\mu=\mu_{\bf J,h}$ only through the interaction ${\bf J}$ and is insensitive
to the choice of fields $\bf h$. Indeed, once $\si,\si'$ are given, then for any $(\t,\t')\in \cC(\si,\si')$ and $(\eta,\eta')\in \cC(\si,\si')$ one has 
\begin{align}\label{eq:eqclass3}
\frac{\mu_{\bf J,h}(\t)\mu_{\bf J,h}(\t')}{\mu_{\bf J,h}(\eta)\mu_{\bf J,h}(\eta')}=
\frac{\mu_{\bf J,h'}(\t)\mu_{\bf J,h'}(\t')}{\mu_{\bf J,h'}(\eta)\mu_{\bf J,h'}(\eta')}\,, \qquad \forall\;{\bf
h,h'}\in\bbR^n.
\end{align}
Thus, w.l.o.g., we may take ${\bf h}=0$, and $\mu=\mu_{\bf J}=\mu_{\bf J,0}$, in the definition of the kernel~\eqref{pq20}. The kernel $\cQ_{\bf J}$ in \eqref{pq20} is
an example of a  so-called ``folding" transformation~\cite{BG}.

Observe that, for all ${\bf h}\in\bbR^n$, the reversibility  condition~\eqref{cw2qq} holds in the form
\begin{equation}
\label{cw2qqla}
\mu_{\bf J,h}(\si)\mu_{\bf J,h}(\si')\kernelJ{\si}{\si'}{\t}{\t'} = \mu_{\bf J,h}(\t)\mu_{\bf J,h}(\t')\kernelJ{\t}{\t'}{\si}{\si'},
\end{equation}
for all $\si,\si',\t,\t'\in\O$. Thus, for a fixed interaction ${\bf J}$, the kernel $\cQ_{\bf J}$ is reversible w.r.t.\ all measures $\{\mu_{\bf J,h}$, ${\bf h}\in\bbR^n\}$.
In particular, all these measures are stationary for the dynamics~\eqref{cwnew}, i.e., 
\begin{align}
\label{eq:inv}
\mu_{\bf J,h}\circ \mu_{\bf J,h}= \mu_{\bf J,h}\, , \qquad \forall\; {\bf h}\in\bbR^n,
\end{align}
as can be easily checked from reversibility.

We note that in the case of zero interactions, i.e.,  ${\bf J}=0$, the nonlinear block dynamics reduces to the uniform crossover model
from population genetics \cite{Sinetal2,CapPar}. In this case, the stationary distributions $\mu_{\bf J,h}$ are just product measures over
spins with marginals determined by~$\bf h$.

\subsubsection{Nonlinear Glauber dynamics} In our second model, the configurations $\sigma,\sigma'$ exchange their spins at a
{\it single\/} randomly chosen site~$x\in [n]$, i.e., 
\begin{align}\label{eq:eqclassx}
(\si,\si') \mapsto (\si'_x\si_{[n]\setminus \{x\}},\si_x\si'_{[n]\setminus \{x\}}).
\end{align}
By analogy with the familiar Glauber dynamics (a Markov chain that updates the spin at one site
in each step), we refer to this as the {\em nonlinear Glauber dynamics}.  As usual, to ensure reversibility
w.r.t.~$\mu_{\bf J,h}$, we need to perform such an exchange with an appropriate probability $\a_x(\si,\si')$.  
Specifically, we use the generic dynamics~\eqref{cwnew} with the kernel
\begin{equation}
\label{pq22}
\kernelJ{\si}{\si'}{\t}{\t'} =
\frac1{n}\sum_{x\in V} \kernelJx{\si}{\si'}{\t}{\t'},
\end{equation}
where
\begin{equation*}
\kernelJx{\si}{\si'}{\t}{\t'} :=\a_x(\si,\si')\ind_{\t=\si'_x\si_{[n]\setminus \{x\}}}\ind_{\t'=\si_x\si'_{[n]\setminus \{x\}}}+ (1-\a_x(\si,\si'))
\ind_{\t=\si}\ind_{\t'=\si'},
\end{equation*}
and
\begin{equation*}
\a_x(\si,\si') := \frac{\mu(\si'_x\si_{[n]\setminus \{x\}})\mu(\si_x\si'_{[n]\setminus \{x\}})}{\mu(\si'_x\si_{[n]\setminus \{x\}})\mu(\si_x\si'_{[n]\setminus \{x\}})+\mu(\si)\mu(\si')}.
\end{equation*}
Once again, the Markov chain on pairs $\O\times\O$ defined by the kernel \eqref{pq22} is not irreducible, and the kernel $\cQ_{\bf J}$
depends on $\mu=\mu_{\bf J,h}$ only through the interaction ${\bf J}$. As in \eqref{cw2qqla}-\eqref{eq:inv}, reversibility and stationarity
of all measures $\mu_{\bf J,h}$ can be easily checked.
 
\subsection{Conservation laws}
In both dynamics defined above, the map $p\mapsto p\circ p$ conserves the marginal probabilities of spins at every vertex,
i.e., for every $x\in[n]$, and for any $p\in\cP(\O)$, one has 
\begin{equation}
\label{consepqu}
(p\circ p)_x=p_x,
\end{equation} where $p_x(a) :=p (\si_x=a)$, $a\in\{-1,1\}$, denotes the marginal of $p$ at $x$.
It is convenient to state the following stronger property.  Let us define the commutative 
convolution product of two distributions $p,q\in\cP(\O)$ by
\begin{equation}
\label{consepq}
 p\circ q :=  \frac12\sum_{\si,\si'} \bigl(p(\si)q(\si')+p(\si')q(\si)\bigr)Q(\cdot\tc\si,\si'),
\end{equation}
where $Q$ is defined by \eqref{cw2q} and \eqref{pq20} for the nonlinear block dynamics and by \eqref{cw2q} and \eqref{pq22} for the nonlinear Glauber dynamics, respectively. Note that the notation \eqref{consepq} is consistent with  \eqref{cw2}.
\begin{lemma}
\label{lem:conser}
Both of the above dynamics satisfy
\begin{equation}
\label{consepqa}
 (p\circ q)_x=\tfrac12\,(p_x +q_x).
\end{equation}
In particular, the conservation law \eqref{consepqu} holds.
\end{lemma}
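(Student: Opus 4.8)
The plan is to isolate the one structural feature shared by both kernels that forces the site marginals to evolve linearly, and then perform a short symmetrization over the pair $(\si,\si')$. The feature is this: for each fixed pair $(\si,\si')$, the kernel $\cQ_{\bf J}(\si,\si';\cdot,\cdot)$ is a probability distribution on $\O\times\O$ supported on pairs $(\t,\t')$ that are \emph{site-wise rearrangements} of $(\si,\si')$, i.e.\ $\{\t_y,\t'_y\}=\{\si_y,\si'_y\}$ as multisets for every $y\in[n]$. For the nonlinear block dynamics this is immediate from~\eqref{eq:eqclass}: for $y\in\L$ one has $(\t_y,\t'_y)=(\si'_y,\si_y)$, and for $y\in\L^c$ one has $(\t_y,\t'_y)=(\si_y,\si'_y)$. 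For the nonlinear Glauber dynamics it is equally immediate from~\eqref{eq:eqclassx}: the swap outcome transposes the two spins at the chosen site and leaves all others fixed, while the no-swap outcome is $(\t,\t')=(\si,\si')$. In both cases one readily checks from~\eqref{pq20}, resp.\ \eqref{pq22}, that $\cQ_{\bf J}(\si,\si';\cdot,\cdot)$ has total mass~$1$.

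The second step records what this says at a single site. Fix $x\in[n]$ and $a\in\{-1,1\}$ and write $Q_x(a\tc\si,\si'):=\sum_{\t:\,\t_x=a}Q(\t\tc\si,\si')$, with $Q(\cdot\tc\si,\si')$ as in~\eqref{cw2q}; equivalently $Q_x(a\tc\si,\si')=\sum_{\t,\t':\,\t_x=a}\cQ_{\bf J}(\si,\si';\t,\t')$. Using the symmetry $\cQ_{\bf J}(\si,\si';\t,\t')=\cQ_{\bf J}(\si',\si;\t',\t)$ together with the relabeling $(\t,\t')\mapsto(\t',\t)$ gives $Q_x(a\tc\si',\si)=\sum_{\t,\t':\,\t'_x=a}\cQ_{\bf J}(\si,\si';\t,\t')$, and hence
\[
  Q_x(a\tc\si,\si')+Q_x(a\tc\si',\si)=\sum_{\t,\t'}\cQ_{\bf J}(\si,\si';\t,\t')\bigl(\ind_{\t_x=a}+\ind_{\t'_x=a}\bigr)=\ind_{\si_x=a}+\ind_{\si'_x=a},
\]
where the last equality uses that $\ind_{\t_x=a}+\ind_{\t'_x=a}=\ind_{\si_x=a}+\ind_{\si'_x=a}$ on the support of $\cQ_{\bf J}(\si,\si';\cdot,\cdot)$ (the site-wise rearrangement property) together with the fact that this kernel has total mass~$1$.

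The conclusion is then a one-line computation. Taking the marginal at $x$ in the definition~\eqref{consepq} gives
\[
  (p\circ q)_x(a)=\tfrac12\sum_{\si,\si'}\bigl(p(\si)q(\si')+p(\si')q(\si)\bigr)\,Q_x(a\tc\si,\si'),
\]
and relabeling $\si\leftrightarrow\si'$ in the second summand rewrites the right-hand side as $\tfrac12\sum_{\si,\si'}p(\si)q(\si')\bigl(Q_x(a\tc\si,\si')+Q_x(a\tc\si',\si)\bigr)$. By the identity of the previous step this equals $\tfrac12\sum_{\si,\si'}p(\si)q(\si')\bigl(\ind_{\si_x=a}+\ind_{\si'_x=a}\bigr)=\tfrac12\bigl(p_x(a)+q_x(a)\bigr)$, which is~\eqref{consepqa}; specializing to $q=p$ yields the conservation law~\eqref{consepqu}.

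I do not expect a genuine obstacle. The only care required is the bookkeeping in the symmetrization over $(\si,\si')$ and the use of the kernel symmetry to interchange $\t$ and $\t'$; the whole content lies in the first step — the observation that neither kernel ever creates or annihilates a spin value at a given site, it only permutes the two values between the two configurations — which is a property both kernels were built to have.
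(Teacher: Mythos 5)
Your proof is correct and follows essentially the same route as the paper's: the paper also reduces the claim to the identity $\tfrac12\sum_{\t,\t'}\bigl(\cQ_{\bf J}(\si,\si';\t,\t')+\cQ_{\bf J}(\si',\si;\t,\t')\bigr)\ind_{\t_x=a}=\tfrac12(\ind_{\si_x=a}+\ind_{\si'_x=a})$, citing exactly your two ingredients (spins are only exchanged between $\si,\si'$, plus the symmetry of $\cQ_{\bf J}$). The only difference is cosmetic: you derive this identity once from the abstract exchange-symmetry and site-wise rearrangement properties, whereas the paper verifies it by a short explicit computation separately for the block and Glauber kernels.
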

\begin{proof}
The proof is a consequence of the fact that, in both dynamics, spins are 
simply exchanged between $\sigma,\sigma'$, as well as the symmetry of $\cQ_{\bf J}$. 
For the kernel \eqref{pq20} we compute
\begin{align}
\label{pq20ax}
\sum_{\t,\t'}\kernelJ{\si}{\si'}{\t}{\t'}\ind_{\t_x=a}&=
\frac{\sum_{\L\subseteq V}\mu(\si'_\L\si_{\L^c})\mu(\si_\L\si'_{\L^c})(\ind_{\si'_x=a,\L\ni x}+\ind_{\si_x=a,\L\not\ni x})}
{\sum_{A\subseteq V}\mu(\si'_A\si_{A^c})\mu(\si_A\si'_{A^c})}
\\ & = \ind_{\si_x=a}  + \frac{\sum_{\L\subseteq V}\mu(\si'_\L\si_{\L^c})\mu(\si_\L\si'_{\L^c})(\ind_{\si'_x=a}-\ind_{\si_x=a})\ind_{\L\ni x}}
{\sum_{A\subseteq V}\mu(\si'_A\si_{A^c})\mu(\si_A\si'_{A^c})}.
\end{align}
By symmetry,
\begin{align}
\label{pq20aax}
\frac12\sum_{\t,\t'}\bigl(\kernelJ{\si}{\si'}{\t}{\t'}+\kernelJ{\si'}{\si}{\t}{\t'}\bigr)\ind_{\t_x=a} = 
\frac12(\ind_{\si_x=a} + \ind_{\si'_x=a}).
\end{align}
In conclusion,
\begin{align}
\label{consepqas}
 (p\circ q)_x(a)& =  \frac12\sum_{\si,\si'} \bigl(p(\si)q(\si')+p(\si')q(\si)\bigr)Q(\cdot\tc\si,\si')\\& = 
 \frac12\sum_{\si,\si'} p(\si)q(\si')\sum_{\t,\t'}\bigl(\kernelJ{\si}{\si'}{\t}{\t'}+\kernelJ{\si'}{\si}{\t}{\t'}\bigr)\ind_{\t_x=a} \\& = 
 \frac12\sum_{\si,\si'} p(\si)q(\si')(\ind_{\si_x=a} + \ind_{\si'_x=a}) =  \frac12(p_x(a)+q_x(a)). 
 \end{align}
 This proves the lemma for the nonlinear block dynamics. To prove it for  the nonlinear Glauber dynamics, observe that in this case by \eqref{pq22} one has  
\begin{align}
\label{pq20axxo}
\sum_{\t,\t'}\kernelJ{\si}{\si'}{\t}{\t'}\ind_{\t_x=a}
= \ind_{\si_x=a} + \frac{1}n\,\a_x(\si,\si')(\ind_{\si'_x=a}-\ind_{\si_x=a}).
\end{align}
 By the symmetry $\a_x(\si,\si')=\a_x(\si',\si)$,
\begin{align}
\label{pq20aoax}
\frac12\sum_{\t,\t'}\bigl(\kernelJ{\si}{\si'}{\t}{\t'}+\kernelJ{\si'}{\si}{\t}{\t'}\bigr)\ind_{\t_x=a} = 
\frac12(\ind_{\si_x=a} + \ind_{\si'_x=a}),
\end{align}
and the conclusion follows as in \eqref{consepqas}. 
\end{proof}

\subsection{The derivation tree and fragmentation}\label{sec:fragment}
Throughout the paper, the following view of the nonlinear dynamics will be central.  By definition, 
$T_t(p)$ is the result of repeated pairwise interactions and can be represented as the distribution at the root of  a binary ``derivation" tree, where each leaf is equipped with the distribution $p\in\cP(\O)$, and recursively, starting from the leaves, each internal node is assigned the distribution $p_1\circ p_2$ where $p_1,p_2$ represent the distributions assigned  to the left and right descendants of that node; see Figure~\ref{fig1} for a schematic picture of the case $t=2$.

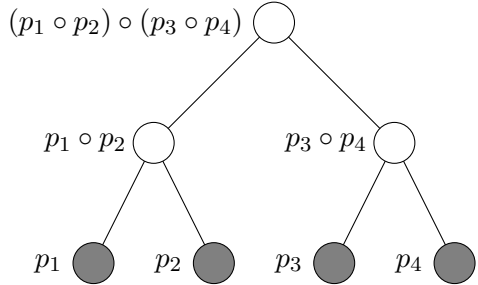
\begin{figure}[h]
\center

\begin{tikzpicture}[scale=0.8]
    
    \draw  (1,-1) -- (2,1);
    \draw  (3,-1) -- (2,1);
   \draw  (2,1) -- (4,3);
    \draw  (5,-1) -- (6,1);
    \draw  (7,-1) -- (6,1);
    \draw  (6,1) -- (4,3);
    
    \node[shape=circle, draw=black, fill = white, scale = 1.5]  at (4,3) {}; 
    \node[shape=circle, draw=black, fill = white, scale = 1.5]  at (2,1) {};
    \node[shape=circle, draw=black, fill = gray, scale = 1.5]  at (1,-1) {};
    \node[shape=circle, draw=black, fill = gray, scale = 1.5]  at (3,-1) {};
    \node[shape=circle, draw=black, fill = white, scale = 1.5]  at (6,1) {};
    \node[shape=circle, draw=black, fill = gray, scale = 1.5]  at (5,-1) {};
    \node[shape=circle, draw=black, fill = gray, scale = 1.5]  at (7,-1) {};
    
    \node at (.25,-1) {$p_1$};
    \node at (2.25,-1) {$p_2$};
    \node at (4.25,-1) {$p_3$};
    \node at (6.25,-1) {$p_4$};
    \node at (1,1) {$p_1 \circ p_2\;\;$};
    \node at (5,1) {$p_3 \circ p_4\;\;$};
    
    \node at (2,3) {$(p_1\circ p_2) \circ (p_3 \circ p_4)\;\;\;\;\;\;\;$};

\end{tikzpicture}
\caption{Graphical representation of the distribution $(p_1\circ p_2) \circ (p_3 \circ p_4)$ at time $t=2$ when each leaf $i=1,\dots,4$ is equipped with distribution~$p_i$. When $p_i = p\;\forall i$, the distribution at the root is $T_2(p)$. }
\label{fig1}
\end{figure}

We focus now on the simple case ${\bf J}=0$, i.e., no correlations between spins.
Under block dynamics, the configuration $\tau$ at the root of the tree (at time~$t$) is constructed according to the random 
partition $(\L,\L^c)$ of~$V$,
which is equivalent to drawing each spin $\tau_x$ from the configuration at the left
or right child node with probability~${\frac 1 2}$, independently for each site $x\in V$.  
Continuing down the tree in the same fashion, we see that each spin at the root is drawn
from one of the $2^t$ leaves (at time~0), independently and uniformly at random.  Thus this
process induces a {\it partition\/} of the sites~$V$ into $2^t$ disjoint subsets (some of which
may be empty), where the $\ell$th subset consists of those sites that draw their spin from
the configuration at leaf~$\ell$.

Now let $\cA$ denote the event that none of the $2^t$ subsets in this partition contains
more than one site; equivalently, each spin in~$\tau$ is drawn from a {\it distinct\/}
leaf.  We call $\cA$ the ``complete fragmentation" event.  Note that, conditional on~$\cA$,
the distribution of the configuration~$\tau$ at the root is just the product $\pi:=\otimes_{x\in V}p_x$,
since there are no remaining correlations between spins.  Hence we may write 
\begin{align}\label{eq51}
 T_t(p)= \nu(\cA)\, \pi + \nu(\cA^c) \,q(t),
\end{align}
for some other distribution~$q(t)$, where $\nu$ denotes the uniform distribution over 
all $2^t-1$ independent random subsets~$\L$ occurring in the tree.  

We can use~\eqref{eq51} to obtain an upper bound on the convergence time for the nonlinear block dynamics when ${\bf J}=0$,
as was done in~\cite{Sinetal2}.  First, we claim that $\nu(\cA^c)\leq {n\choose 2}2^{-t}$.  To see this, 
note that for any given pair of distinct sites $x,y\in V$,
the probability that $x,y$ are not separated after $t$ levels of the successive partitioning process 
is~$2^{-t}$, and then take a union bound over pairs.  Hence by~\eqref{eq51}, taking $t=O(\log(n/\e))$
ensures that $\Vert \widetilde T_t(p) - \pi\Vert_{\rm TV} \le\e$, so the convergence time is $O(\log(n/\e))$.

For the nonlinear Glauber dynamics with ${\bf J}=0$ a similar analysis applies, except that now each node in the tree
chooses one random spin from the left child and the remainder from the right child.  The complete fragmentation 
event~$\cA$ now corresponds to isolating each of the $n$ bits in this process, which is just a
coupon-collecting event for $n=|V|$ coupons.  Thus we have $\nu(\cA^c)\leq n(1-\frac{1}{n})^t\le ne^{-t/n}$
(where now $\nu$ denotes the uniform distribution over all $2^t-1$ independent choices of random spins 
occurring in the tree),
which by~\eqref{eq51} implies a convergence time of $O(n\log(n/\e))$.

When ${\bf J}\ne 0$, so that correlations are present, it is no longer possible to reduce the analysis of 
convergence to the above simple fragmentation process, because the mechanism by which a configuration
inherits spins from its parents depends on the actual configurations at the parent nodes.
Thus to prove Theorems~\ref{thm:main} and~\ref{thm:main2} we will need to augment the simple derivation process above
to obtain a more complex process that we call ``fragmentation with noise"
(see Sections~\ref{sec:nonlinearbd} and~\ref{sec:Glauber}).

\subsection{Irreducibility}\label{sec:irred}
Next we establish a rough lower bound on the probability of any configuration after a sufficiently long time.
This observation will be key to our proof of convergence in the next subsection.  We note that the lack of such a
lower bound is the principle obstacle to proving the Global Attractor Conjecture for general reversible mass action systems.  

The following definition captures the desired property.
\begin{definition}\label{def:irred}
We say that an initial distribution $p\in\cP(\O)$ is {\em irreducible} for a given mass action system with kernel~$\cQ$ if there
exist $\e>0$ and~$t_0$ such that $T_t(p)(\tau)\ge\e$ for all 
$t\ge t_0$ and all $\tau\in\Omega$.
\end{definition}
\noindent
Thus irreducibility says that the trajectory of the dynamics
starting from~$p$ eventually remains bounded away from the
boundary of the simplex.  Note that, under this definition, irreduciblity is a property of initial distributions:
i.e., for a given kernel~$\cQ$, some initial distributions may be irreducible while others are not.

In what follows we shall assume that the initial distribution~$p$ of our dynamical system has {\it nondegenerate
marginals}, by which we mean that there exists $\d>0$ such that
\begin{align}\label{margo}
\min_{x\in V}\min_{a\in\{-1,+1\}}p_x(a)\geq \d>0.
\end{align}
This is actually no loss of generality since one can otherwise restrict to the nondegenerate spins and consider the degenerate spins 
as a fixed boundary condition, or pinning; see Remarks~\ref{rem:bc} and~\ref{rem:bc2}.

\begin{lemma}\label{lem:irred} 
For any interaction matrix ${\bf J}$, any initial distribution $p\in\cP(\O)$ with nondegenerate marginals is irreducible
for both the nonlinear block dynamics and the nonlinear Glauber dynamics.
\end{lemma}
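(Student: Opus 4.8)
The plan is to show that after a single step the distribution $T_1(p)$ already has full support, and that this lower bound on all coordinates is preserved (up to the same constant) under all subsequent steps. The key point is to exploit that both kernels $\cQ_{\bf J}$, when restricted to the communicating class $\cC(\si,\si')$, are honest reversible (and in fact non-degenerate) Markov chains, together with the fact that nondegenerate marginals allow us to produce \emph{any} target configuration from a suitable pair of parents.

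First I would establish a \emph{one-step} full-support bound. Fix any target $\tau\in\Omega$. Since $p$ has nondegenerate marginals, write $\tau=\tau_\L\,\bar\tau_{\L^c}$ where $\bar\tau$ is the spin-flip of $\tau$; more simply, observe that for \emph{any} partition there is a pair $(\si,\si')$ with $\si_\L=\tau_\L$, $\si'_{\L^c}=\tau_{\L^c}$, so the exchange move in \eqref{eq:eqclass} produces $\tau$ as the first coordinate. Concretely, take $\L=V$ and any $\si$ with $\si=\tau$; then $p(\si)\ge \prod_x p_x(\tau_x)\ge \d^n$ by a standard comparison (the Gibbs weights only distort product marginals by a bounded factor, but in fact we do not even need this: choosing $\si=\tau$ and $\si'=\tau$ and $\L=\emptyset$ already yields $\tau$ with probability bounded below). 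Then from \eqref{cw2}, $T_1(p)(\tau)\ge p(\si)p(\si')\,Q(\tau\mid\si,\si')$, and the kernel probability of the move producing $\tau$ within $\cC(\si,\si')$ is strictly positive and bounded below by an absolute constant depending only on ${\bf J}$ and $n$, since it is the stationary-type weight $\mu(\cdot)\mu(\cdot)$ normalized over the finitely many elements of $\cC(\si,\si')$, each of which has $\mu$-weight bounded below (as $\mu\in\cP_+(\Omega)$). Hence $T_1(p)(\tau)\ge\e_0>0$ for all $\tau$, with $\e_0=\e_0(n,{\bf J},\d)$.

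Next I would propagate this bound forward in time. Given $q:=T_s(p)$ with $q(\tau)\ge\e_0$ for all $\tau$, I want $q\circ q(\tau)\ge\e_0$ as well. This is where a little care is needed: $q\circ q(\tau)=\sum_{\si,\si'}q(\si)q(\si')Q(\tau\mid\si,\si')$, and since the $Q(\cdot\mid\si,\si')$ are probability distributions, $\sum_\si q(\si)^2 Q(\si\mid\si,\si')\ge \e_0\sum_\si q(\si) Q(\si\mid\si,\si')$ — but $Q(\si\mid\si,\si)$ need not be $1$. The clean way is: for each $\tau$, pick a pair $(\si_\tau,\si'_\tau)$ as in the first paragraph with $\tau\in\cC(\si_\tau,\si'_\tau)$ and $Q(\tau\mid\si_\tau,\si'_\tau)\ge c_1>0$ (absolute constant as above); then $q\circ q(\tau)\ge q(\si_\tau)q(\si'_\tau)\,c_1\ge \e_0^2 c_1$. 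This gives a \emph{worse} constant at each step, which is not good enough if iterated naively. The fix is to combine the two observations: the one-step bound of the second paragraph is robust in that it holds for \emph{any} distribution with nondegenerate marginals, and by Lemma~\ref{lem:conser} the marginals are conserved along the whole trajectory, so $T_s(p)$ has the \emph{same} nondegenerate marginals as $p$ for every $s$. Therefore the one-step argument applies verbatim to $T_s(p)$ in place of $p$: $T_{s+1}(p)(\tau)=\bigl(T_s(p)\circ T_s(p)\bigr)(\tau)\ge\e_0$ for all $\tau$ and all $s\ge0$, with the \emph{same} $\e_0$. Taking $t_0=1$ and $\e=\e_0$ completes the proof, uniformly for both dynamics since both arguments used only the exchange structure of the moves and positivity of $\mu$.

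The main obstacle is the second paragraph: one must verify that the move within the communicating class $\cC(\si,\si')$ that outputs the desired $\tau$ has probability bounded below by a constant not depending on the (possibly very small) probabilities $p(\si),p(\si')$ themselves — i.e., that the normalization $\sum_{A}\mu(\si'_A\si_{A^c})\mu(\si_A\si'_{A^c})$ in \eqref{pq20} (resp.\ the $\a_x$ in \eqref{pq22}) is controlled. This is where I would use $\mu=\mu_{\bf J}\in\cP_+(\Omega)$: every term $\mu(\cdot)\mu(\cdot)$ lies in $[\mu_{\min}^2,\mu_{\max}^2]$, so the kernel weight of any given outcome in $\cC(\si,\si')$ is at least $\mu_{\min}^2/(2^n\mu_{\max}^2)$, an absolute constant. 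For the nonlinear Glauber dynamics the analogous statement is that $\a_x(\si,\si')$ and $1-\a_x(\si,\si')$ are both bounded below, again by $\mu\in\cP_+$, and one reaches an arbitrary $\tau$ by iterating single-site exchanges — but since we only need \emph{one} step from $T_s(p)$ (with its fixed nondegenerate marginals) and the one-step bound already covers $\L=\emptyset$ / the null move composed with the right parents, it suffices to note $T_1$ has full support and then close the loop exactly as above.
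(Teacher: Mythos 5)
There is a genuine gap, and it is at the very first step. Your argument hinges on the claim that $T_1(p)$ already has full support, justified by asserting $p(\si)\ge \prod_x p_x(\tau_x)\ge\d^n$ (or, in your "simpler" variant, by taking $\si=\si'=\tau$ with $\L=\emptyset$). But an arbitrary distribution with nondegenerate marginals is \emph{not} bounded below by the product of its marginals, and need not have full support at all: the hypothesis only controls $p_x$, not $p$ itself. There is no "standard comparison" available here because $p$ is an arbitrary initial distribution, not a Gibbs measure. Concretely, for the nonlinear Glauber dynamics take $p=\tfrac12\d_{+\cdots+}+\tfrac12\d_{-\cdots-}$: its marginals are all $(\tfrac12,\tfrac12)$, yet after one step the support consists only of configurations within Hamming distance $1$ of the two constant configurations, so $T_1(p)(\tau)=0$ for most $\tau$ when $n\ge 3$. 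Even for the block dynamics one step does not suffice: with $n=3$ and $p$ uniform on $\{(+,+,-),(+,-,+),(-,+,+)\}$, the target $\tau=(-,-,-)$ cannot be written as $\si'_\L\si_{\L^c}$ for any pair $\si,\si'$ in the support, so $T_1(p)(\tau)=0$. Hence $t_0=1$ and a constant $\e_0$ of the form you describe simply do not exist; any correct argument must let $t_0$ grow with $n$ (roughly $\log_2 n$ for the block dynamics, $n\log n$ for Glauber), which is exactly the content missing from your proposal.

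The missing idea is the one the paper uses: lower-bound the \emph{kernel}, not the initial law, by splitting each interaction as a mixture in which, with probability $\d_{\bf J}$ (coming from $\mu_{\bf J}(\si)\mu_{\bf J}(\si')\ge\d_{\bf J}$), the exchange is performed exactly as in the ${\bf J}=0$ dynamics. Iterating over the $N=2^t-1$ interactions of the derivation tree gives $T_t(p)=\d_{\bf J}^N\widetilde T_t(p)+(1-\d_{\bf J}^N)\widehat T_t(p)$, and for the ${\bf J}=0$ evolution the complete-fragmentation (resp.\ coupon-collecting) event has probability $\ge\tfrac12$ once $t\ge t_0$; on that event the root configuration is assembled from distinct leaves, so each spin is drawn independently from its marginal and $\widetilde T_{t_0}(p)(\tau)\ge\tfrac12\,\d^n$ — this is the only legitimate place where $\d^n$ enters. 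Your final paragraph does anticipate correctly the other half of the paper's argument: since Lemma~\ref{lem:conser} conserves marginals, the $t_0$-step bound applies verbatim to $T_{t-t_0}(p)$, giving the uniform bound for all $t\ge t_0$. But without replacing your one-step claim by a genuine multi-step fragmentation (or some equivalent mechanism that reaches every $\tau$ using only the marginals), the proof does not go through.
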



\begin{proof}
\ignore{
By definition, 
$T_t(p)$ is the result of repeated pairwise interactions and can be represented as the distribution at the root of  a binary tree, where each leaf is equipped with the distribution $p\in\cP(\O)$, and recursively, starting from the leaves, each internal node is assigned the distribution $p_1\circ p_2$ where $p_1,p_2$ represent the distributions assigned  to the left and right descendants of that node; see Figure \ref{fig1} for a schematic picture of the case $t=2$.

\begin{figure}[h]
\center

\begin{tikzpicture}[scale=0.8]
    
    \draw  (1,-1) -- (2,1);
    \draw  (3,-1) -- (2,1);
   \draw  (2,1) -- (4,3);
    \draw  (5,-1) -- (6,1);
    \draw  (7,-1) -- (6,1);
    \draw  (6,1) -- (4,3);
    
    \node[shape=circle, draw=black, fill = white, scale = 1.5]  at (4,3) {}; 
    \node[shape=circle, draw=black, fill = white, scale = 1.5]  at (2,1) {};
    \node[shape=circle, draw=black, fill = gray, scale = 1.5]  at (1,-1) {};
    \node[shape=circle, draw=black, fill = gray, scale = 1.5]  at (3,-1) {};
    \node[shape=circle, draw=black, fill = white, scale = 1.5]  at (6,1) {};
    \node[shape=circle, draw=black, fill = gray, scale = 1.5]  at (5,-1) {};
    \node[shape=circle, draw=black, fill = gray, scale = 1.5]  at (7,-1) {};
    
    \node at (.25,-1) {$p_1$};
    \node at (2.25,-1) {$p_2$};
    \node at (4.25,-1) {$p_3$};
    \node at (6.25,-1) {$p_4$};
    \node at (1,1) {$p_1 \circ p_2\;\;$};
    \node at (5,1) {$p_3 \circ p_4\;\;$};
    
    \node at (2,3) {$(p_1\circ p_2) \circ (p_3 \circ p_4)\;\;\;\;\;\;\;$};

\end{tikzpicture}
\caption{Graphical representation of the distribution $(p_1\circ p_2) \circ (p_3 \circ p_4)$ at time $t=2$ when each leaf $i=1,\dots,4$ is equipped with distribution~$p_i$. When $p_i = p\;\forall i$, the distribution at the root is $T_2(p)$. }
\label{fig1}
\end{figure}
}
Let us first consider the nonlinear block dynamics \eqref{pq20}. We note that for a fixed interaction matrix ${\bf J}\in\bbR^{n\times n}$,
there exists a constant $\d_{\bf J}>0$ such that
\begin{align}
\mu_{\bf J}(\si)\mu_{\bf J}(\si')\geq \d_{\bf J}\,,\qquad \forall\, \si,\si'\in\O.
\end{align}
(Recall that $\cQ_{\bf J,h}=\cQ_{\bf J}$ does not  depend on~$\bf h$, so since we are discussing a property of~$\cQ_{\bf J}$ we may take ${\bf h}=0$.)
It follows from \eqref{pq20} that
\begin{align}
\label{pq20dj}
\kernelJ{\si}{\si'}{\t}{\t'}=\d_{\bf J}\,2^{-n}\sum_{\L\subseteq V}
\ind_{\t=\si'_\L\si_{\L^c}}\ind_{\t'=\si_\L\si'_{\L^c}} + (1-\d_{\bf J}) \kernelJhat{\si}{\si'}{\t}{\t'},
\end{align}
where,
for each $\si,\si'$, $\kernelJhat{\si}{\si'}{\cdot}{\cdot}\in\cP(\O\times\O)$ is some probability distribution that we do not need to describe explicitly. 
Therefore, from \eqref{cw2},
\begin{align}
\label{cw2irre}
(p\circ p) = \d_{\bf J} 2^{-n}\sum_{\L\subseteq V} (p_\L\otimes p_{\L^c}) + (1-\d_{\bf J})\widehat\Phi(p),
\end{align}
where $(p_\L\otimes p_{\L^c})(\t)=p_\L(\t_\L)p_{\L^c}(\t_{\L^c})$ denotes the product of marginals of $p$ on $\L$, $\L^c$,
and $\widehat\Phi(p)\in\cP(\O)$ is some new distribution. 
We may interpret~\eqref{cw2irre}
as saying that the outcome of each interaction is, with probability (at least)~$\d_{\bf J}$, equal to the factorized distribution $p_\L\otimes p_{\L^c}$,
where $\L$ is chosen u.a.r.\ among all subsets of~$V$. 
Now note that the map $p\mapsto 2^{-n}\sum_{\L\subseteq V} p_\L\otimes p_{\L^c}$ corresponds to one step
of the block dynamics when ${\bf J}=0$.
Writing $\widetilde T_t(p)$ for the $t$-step evolution of this ${\bf J}=0$ dynamics,
and noting from the tree representation that the construction of $T_t(p)$ involves $N:=2^{t}-1$ interactions, 
we may rewrite~\eqref{cw2irre} as
\begin{align}
\label{cw2irres}
 T_t(p)= \d_{\bf J}^N \widetilde T_t(p)  + (1-\d_{\bf J}^N)\widehat T_t(p),
\end{align}
where $\widehat T_t(p)\in\cP(\O)$ is again some other distribution that we will not describe.

To complete the argument, we appeal to the analysis of the ${\bf J}=0$ case from the previous subsection.
Specifically, we use equation~\eqref{eq51} together with the analysis of the complete fragmentation event~$\cA$,
which implies that $\nu(\cA)\ge\frac 1 2$ for $t\ge t_0=\lceil 2\log_2 n\rceil$,
to deduce that $\widetilde T_{t_0}(p)(\tau)\ge {\frac12}\pi(\tau)$ for all $\tau\in\O$.
Here $\pi$ is the stationary distribution for the ${\bf J}=0$ case, which is just the product $\pi:=\otimes_{x\in V}p_x$,
and the assumption~\eqref{margo} implies $\pi(\tau)\ge\delta^n$ for all~$\tau$.  Plugging these observations
into~\eqref{cw2irres} gives
\begin{align}\label{eq52}
 T_{t_0}(p)(\t)\geq  \tfrac12\,\d_{\bf J}^{N_0}\d^n\,,\qquad \forall\, \t\in\O,
\end{align}
where $N_0=2^{t_0}-1$.
This proves the claim for $t=t_0$ with $\e=\tfrac12\,\d_{\bf J}^{N_0}\d^n$. To prove it for all  $t\geq t_0$, observe that if $t\geq t_0$ then $ T_{t}(p)= T_{t_0}( T_{t-t_0}(p))$. 
Since by Lemma~\ref{lem:conser}, $T_{t-t_0}(p)$ has the same marginals as~$p$, we can apply the bound~\eqref{margo} to $ T_{t-t_0}(p)$ with the same constant $\d$. Using \eqref{eq52} with $p$ replaced by  $T_{t-t_0}(p)$ yields $T_{t}(p)(\t)\geq \e$ with the same $\e$ for all $t\geq t_0$. 
This completes the proof of the lemma for the nonlinear block dynamics.

To prove it for the nonlinear Glauber dynamics we follow the same reasoning.
We first observe that for some $\d_{\bf J}>0$ one has \[
\a_x(\si,\si')\geq \d_{\bf J}\,,\qquad \forall\, x\in V\,,\;\si,\si'\in\O
\]
and therefore~\eqref{cw2irre} now takes the form
\begin{align}
p\circ p&= \d_{\bf J}\frac1n\sum_{x\in V}p_x\otimes p_{[n]\setminus\{x\}} + (1-\d_{\bf J})\widehat\Phi(p),
\end{align}
for some new measure $\widehat\Phi(p)\in\cP(\O)$.
We may again write the expression~\eqref{cw2irres}, where now $\widetilde T_t(p)$ is the $t$-step evolution of the
${\bf J}=0$ nonlinear Glauber dynamics,
$p\mapsto p_x\otimes p_{[n]\setminus\{x\}}$ with $x\in V$ chosen uniformly at random.
(Incidentally, it is interesting to note that, in contrast with the block dynamics, the process $\widetilde T_t(p)$ here is actually linear in~$p$, 
since the marginals $p_x$ are constants of the motion.)  

Following the same reasoning as above, again using equation~\eqref{eq51} from the previous subsection
together with the analysis of the complete fragmentation event~$\cA$ (which in this case corresponds to coupon-collecting)
leads to $\nu(\cA)\ge{\frac12}$ for $t\ge t_0=n\lceil 1+\log n\rceil$, which in turn with the assumption~\eqref{margo} implies
$\widetilde T_{t_0}(p)(\t)\geq \d^n/2$ for all $\tau\in\Omega$.  It follows that
\begin{align}\label{eq52a}
 T_{t_0}(p)(\t)\geq  \tfrac12\,\d_{\bf J}^{t_0}\d^n\,,\qquad\forall\, \t\in\O.
\end{align}
The desired conclusion for all $t\geq t_0$ follows as in the block dynamics case.  
\end{proof}

\begin{remark}\label{rem:irr}
For (linear) Markov chains, an irreducibility statement as in Lemma~\ref{lem:irred} immediately implies exponential 
convergence to stationarity (at some possibly very slow rate).
For the nonlinear dynamics considered here, however, it is an open question whether this holds.  We will see in the next 
subsection that Lemma~\ref{lem:irred} is enough to ensure convergence, though without any information about the rate; one
reason why this may be delicate is the fact that, for our nonlinear dynamics, the total variation distance to stationarity is 
in general not monotonically decreasing~\cite[Remark 2.7]{CLL}.
However, our main results (Theorems~\ref{thm:main}  and~\ref{thm:main2} in the introduction) do imply exponential convergence in the
high-temperature regime (i.e., when the interactions in~$\bf J$ are sufficiently weak).
\end{remark}

\subsection{Convergence to stationarity}\label{sec:cgce}
As we have seen, reversibility implies that, for a fixed interaction matrix~${\bf J}$, the Ising measures $\mu_{\bf J,h}$ defined 
in~\eqref{eq:Gibbs} are all stationary, regardless of the choice of~${\bf h}$.
In fact, these are the {\it only\/} stationary distributions, as proved in~\cite{CapSin}: 
\begin{lemma}\label{station} \cite[Lemma 3.2]{CapSin}\ \ 
For both the above dynamics, for any fixed  interaction matrix ${\bf J}$, a distribution $\mu\in\cP(\O)$ is stationary for \eqref{cw2} if and only
if $\mu$ has the form~\eqref{eq:Gibbs} for some choice of the fields~${\bf h}$. 
\end{lemma}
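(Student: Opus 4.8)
The ``if'' direction is already established before the lemma: reversibility of $\cQ_{\bf J}$ with respect to $\mu_{\bf J,h}\otimes\mu_{\bf J,h}$ gives $\mu_{\bf J,h}\circ\mu_{\bf J,h}=\mu_{\bf J,h}$ for every $\bf h$, cf.\ \eqref{eq:inv}. So the plan is to prove the converse, by an argument that applies to both kernels at once. First a harmless reduction: it suffices to treat $\mu\in\cP_+(\O)$, since any degenerate spins can be frozen as a boundary condition and one argues on the remaining sites (cf.\ Remark~\ref{rem:bc}). Fix the reference measure $\nu:=\mu_{\bf J}=\mu_{\bf J,0}$, and introduce the \emph{linear} Markov kernel $K$ on $\O\times\O$ given by $K\bigl((\si,\si'),(\t,\t')\bigr):=\kernelJ{\si}{\si'}{\t}{\t'}$. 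By \eqref{cw2qqla} (and its analogue for the Glauber kernel), $K$ is reversible, hence stationary, with respect to $\nu\otimes\nu$; and, as noted right after \eqref{pq20}, the communicating classes of $K$ are precisely the sets $\cC(\si,\si')$, on each of which $K$ is irreducible.

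The first and main step is to upgrade the \emph{nonlinear} fixed-point identity $\mu\circ\mu=\mu$ to the \emph{linear} one $(\mu\otimes\mu)K=\mu\otimes\mu$, and for this I would track relative entropy with respect to $\nu$. Set $P:=(\mu\otimes\mu)K$. Using the definition \eqref{cw2q} of $Q$ and the coordinate-swap symmetry $\kernelJ{\si}{\si'}{\t}{\t'}=\kernelJ{\si'}{\si}{\t'}{\t}$, both marginals of $P$ are seen to equal $\mu\circ\mu$. Now compare relative entropies against $\nu\otimes\nu$: on one side, since $K$ is $\nu\otimes\nu$-stationary, monotonicity of relative entropy under Markov kernels gives $\Ent(P\,\|\,\nu\otimes\nu)\le\Ent(\mu\otimes\mu\,\|\,\nu\otimes\nu)=2\Ent(\mu\,\|\,\nu)$; on the other, the product form of $\nu\otimes\nu$ gives $\Ent(P\,\|\,\nu\otimes\nu)=I(P)+\Ent(P_1\,\|\,\nu)+\Ent(P_2\,\|\,\nu)$, where $P_1,P_2$ are the marginals of $P$ and $I(P)\ge0$ is the mutual information between the two coordinates under $P$. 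If $\mu$ is stationary then $P_1=P_2=\mu$, and these two displays force every inequality to be an equality; in particular $I(P)=0$, which means $P=\mu\otimes\mu$. (As a byproduct this shows $\Ent(\cdot\,\|\,\nu)$ is a Lyapunov functional for the nonlinear dynamics, whose critical points form the family $\{\mu_{\bf J,h}\}$.)

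The second step is rigidity. A stationary measure of a reversible kernel that is irreducible on each of its communicating classes must, restricted to any one such class, be proportional to the reversing measure restricted to that class. Applied to $K$, $\nu\otimes\nu$ and the classes $\cC(\si,\si')$, this says: for all $(\si,\si')$ and all $(\t,\t')\in\cC(\si,\si')$, $\mu(\si)\mu(\si')/\bigl(\mu(\t)\mu(\t')\bigr)=\nu(\si)\nu(\si')/\bigl(\nu(\t)\nu(\t')\bigr)$. Put $f:=\log\mu-\log\nu$. Since a subset-swap permutes, coordinate by coordinate, the pair of spin values carried by $\si$ and $\si'$, membership $(\t,\t')\in\cC(\si,\si')$ is exactly the condition $\si+\si'=\t+\t'$ (pointwise), so the identity above reads $f(\si)+f(\si')=f(\t)+f(\t')$ whenever $\si+\si'=\t+\t'$. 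Specializing to two coordinates $x\ne y$ with a fixed background $\rho$ on $V\setminus\{x,y\}$ (writing $\rho^{ab}$ for the configuration equal to $\rho$ off $\{x,y\}$ with spins $a,b$ at $x,y$) gives $f(\rho^{++})+f(\rho^{--})=f(\rho^{+-})+f(\rho^{-+})$, i.e.\ every mixed second discrete difference of $f$ on the hypercube vanishes. Expanding $f$ in the monomials $\prod_{i\in S}\si_i$, this kills every coefficient with $|S|\ge2$, so $f(\si)=c+\sum_x h_x\si_x$ for constants $c,\{h_x\}$, whence $\mu\propto\exp\{\tfrac12\sum_{x,y}J_{xy}\si_x\si_y+\sum_x h_x\si_x\}=\mu_{\bf J,h}$.

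For the nonlinear Glauber dynamics the argument is identical; the only thing to verify is that the kernel \eqref{pq22} also defines, on $\O\times\O$, a reversible chain whose communicating classes are again the $\cC(\si,\si')$: the lazy part of \eqref{pq22} obeys detailed balance trivially, the single-swap part by the choice of $\a_x$, and since $\a_x(\si,\si')\in(0,1)$ a succession of single-site swaps realizes a swap at any $\L\subseteq V$, so $K$ is irreducible on each $\cC(\si,\si')$. The crux is Step~1: it is the strict convexity of $x\mapsto x\log x$ that converts the quadratic equation $\mu\circ\mu=\mu$ into the linear one $(\mu\otimes\mu)K=\mu\otimes\mu$; Step~2 and the Glauber bookkeeping are routine. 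This recovers \cite[Lemma~3.2]{CapSin}.
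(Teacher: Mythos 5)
Your proof is correct in substance, but note that the paper itself offers no argument for this lemma: it simply imports it as \cite[Lemma 3.2]{CapSin}, so there is nothing in-paper to match your proof against line by line. What you have written is a self-contained substitute, and its first step in fact duplicates machinery the paper does prove later: your passage from the nonlinear fixed point $\mu\circ\mu=\mu$ to the linear invariance $(\mu\otimes\mu)K=\mu\otimes\mu$ is exactly Proposition~\ref{prop:invrev}, which the paper obtains from the strict entropy-decrease argument of Lemma~\ref{le:relent} (strict convexity along the $\cQ$-step, plus the marginalization inequality \eqref{eq:ent14}). Your route via the data-processing inequality against $\nu\otimes\nu$ combined with the chain-rule decomposition $\Ent(P\,\|\,\nu\otimes\nu)=I(P)+\Ent(P_1\|\nu)+\Ent(P_2\|\nu)$ is a clean equivalent; it buys you the equality case ($I(P)=0$, hence $P=\mu\otimes\mu$) without needing the strict-decrease lemma, at the cost of re-deriving something you could simply have cited. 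Your second step — rigidity of invariant measures of a reversible chain on its communicating classes $\cC(\si,\si')$ (which the paper identifies after \eqref{pq20}, and which you correctly re-derive for the kernel \eqref{pq22} from $\a_x\in(0,1)$), followed by the observation that $(\t,\t')\in\cC(\si,\si')$ iff $\si+\si'=\t+\t'$ pointwise, so that $f=\log\mu-\log\mu_{\bf J}$ has vanishing mixed second differences and hence is affine in the spins — is the genuinely new content relative to this paper, and it correctly recovers the form \eqref{eq:Gibbs}.

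One caveat deserves more care than your opening sentence gives it. The reduction to $\mu\in\cP_+(\O)$ is not automatic: you must first show that the support of a stationary $\mu$ is a pinned sub-cube. This is easy but should be said: since every $\L$ (resp.\ every single-site swap) has positive probability under the kernel, $\mu\circ\mu=\mu$ forces the support to be closed under coordinatewise exchanges, hence equal to $\prod_x S_x$ with some coordinates frozen, after which your argument applies to the free coordinates with the frozen spins as a boundary condition. Even then, the degenerate case is only of the form \eqref{eq:Gibbs} in the limiting sense $h_x\to\pm\infty$ (the convention of Remarks~\ref{rem:bc} and~\ref{rem:bc2}); with $\bf h$ read literally as a real vector, point masses are stationary but not Gibbs, so the lemma should be understood either under a full-support (nondegenerate-marginals) hypothesis or with that convention. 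With these two sentences added, your proof stands.
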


We now address the convergence result claimed in Theorem~\ref{thm:convintro} in the introduction, which we
restate here more formally.

\begin{theorem}\label{th:conv}
Fix an interaction matrix ${\bf J}$. For any initial distribution $p\in\cP(\O)$ with nondegenerate marginals, both the nonlinear block 
dynamics and the nonlinear Glauber dynamics  satisfy the convergence
\begin{align}
\label{peqconver}
\|T_t(p)-\mu_{\bf J,h}\|_{\rm TV}\to 0\qquad\hbox{\rm as $t\to\infty$},
\end{align}
where ${\bf h}$ is the unique choice of external fields such that $\mu_{\bf J,h}$ and $p$ have the same marginals, i.e., $(\mu_{\bf J,h})_x=p_x$ for all $x\in V$. 
\end{theorem}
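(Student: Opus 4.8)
The plan is to combine the irreducibility bound of Lemma~\ref{lem:irred} with a LaSalle-type argument driven by relative entropy, in the spirit of the convergence proof for reversible quadratic dynamical systems in~\cite{Sinetal} (cf.\ Remark~\ref{rem:qds}). Fix once and for all $\mu := \mu_{\bf J,h}$ with $\bf h$ the unique field vector for which $(\mu_{\bf J,h})_x = p_x$ for every $x\in V$; existence and uniqueness of such $\bf h$, for the given $\bf J$, is the standard convexity fact recalled in the introduction (cf.~\cite{ChenEldan22}). Set
\[
  H(q\,\|\,\mu) := \sum_{\si\in\O} q(\si)\log\frac{q(\si)}{\mu(\si)},
\]
and write $\Phi(q):=q\circ q$, so that $T_t(p)=\Phi^t(p)$. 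The map $\Phi$ is quadratic, hence a continuous self-map of the compact simplex $\cP(\O)$; and, since $\mu$ has full support, $H(\cdot\,\|\,\mu)$ is continuous and bounded on $\cP(\O)$. As recalled in the introduction, reversibility~\eqref{cw2qq} implies that $t\mapsto H(T_t(p)\,\|\,\mu)$ is non-increasing. The goal is to show that the $\omega$-limit set of the trajectory equals $\{\mu\}$.

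First I would localize the trajectory in the interior of the simplex using irreducibility. By Lemma~\ref{lem:irred} there are $\e_0>0$ and $t_0$ with $T_t(p)(\tau)\ge\e_0$ for all $t\ge t_0$ and all $\tau\in\O$, so the tail $\{T_t(p):t\ge t_0\}$ lies in the compact set $\cK := \{q\in\cP(\O):\min_\tau q(\tau)\ge\e_0\}\subseteq\cP_+(\O)$. Consequently the $\omega$-limit set $\omega(p):=\bigcap_{s\ge t_0}\overline{\{T_t(p):t\ge s\}}$ is non-empty, compact, contained in $\cK$, satisfies $\dist_{\rm TV}(T_t(p),\omega(p))\to0$, and—since $\Phi$ is continuous—is forward $\Phi$-invariant, $\Phi(\omega(p))\subseteq\omega(p)$; these are standard facts about $\omega$-limit sets of the iteration of a continuous map on a compact metric space. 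Two properties of $\omega(p)$ will be used: (i) every $q\in\omega(p)$ has the same single-site marginals as $p$, since marginals are conserved along the trajectory (Lemma~\ref{lem:conser}) and the marginal map $q\mapsto(q_x)_x$ is continuous; and (ii) $H(\cdot\,\|\,\mu)$ is constant on $\omega(p)$, equal to $h_\infty:=\lim_t H(T_t(p)\,\|\,\mu)$ (the limit exists, being that of a non-increasing bounded sequence), again by continuity.

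The heart of the argument is to show that every $q\in\omega(p)$ is stationary. Fix $q\in\omega(p)$; by (ii) and forward invariance, $H(\Phi(q)\,\|\,\mu)=h_\infty=H(q\,\|\,\mu)$, so the entropy dissipation vanishes at $q$. The required input is the equality case of the $H$-theorem: a \emph{full-support} distribution $q$ at which the dissipation $H(q\,\|\,\mu)-H(\Phi(q)\,\|\,\mu)$ vanishes must be stationary, $\Phi(q)=q$. This is the only place where the combinatorial structure of the kernels $\cQ_{\bf J}$ enters: writing one step as an average over the spin-exchange moves~\eqref{eq:eqclass} (resp.~\eqref{eq:eqclassx}), the dissipation is a sum of nonnegative terms—convexity defects and conditional-independence defects—that can vanish simultaneously only if $q$ factorizes compatibly across every exchange of positive probability, which, because $q$ has full support, forces the fixed-point relation $\Phi(q)=q$. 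This equality case is essentially the technical content of the convergence argument of~\cite{Sinetal}, adapted here (its interacting-Ising version being available precisely because irreducibility has confined us to $\cP_+(\O)$); alternatively one proves a uniform-on-$\cK$ lower bound of the form $H(q\,\|\,\mu)-H(\Phi(q)\,\|\,\mu)\ge c\,\|\Phi(q)-q\|_{\rm TV}^2$, whence $\|T_{t+1}(p)-T_t(p)\|_{\rm TV}\to0$ and continuity of $\Phi$ yields $\Phi(q)=q$ for all $q\in\omega(p)$.

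It remains to identify the limit. By Lemma~\ref{station}, every stationary $q\in\omega(p)$ has the form $\mu_{\bf J,h'}$ for some $h'\in\bbR^n$; together with property (i) and the uniqueness of the field given the single-site marginals, this forces $h'=h$, i.e.\ $q=\mu$. Hence $\omega(p)=\{\mu\}$, which is precisely $\|T_t(p)-\mu\|_{\rm TV}\to0$, as claimed. I expect the main obstacle to be the equality case of the $H$-theorem used above: monotonicity of relative entropy and the surrounding compactness arguments are routine, whereas ``vanishing dissipation $\Rightarrow$ stationary'' genuinely requires exploiting the structure of the block and Glauber kernels. It is precisely here that Lemma~\ref{lem:irred} does the essential work, by pushing the trajectory into the interior $\cP_+(\O)$, where relative entropy is well-behaved and the equality case holds—by contrast, the absence of such an interior confinement is, as noted in Section~\ref{sec:irred}, the principal obstruction to the Global Attractor Conjecture in the general mass-action setting.
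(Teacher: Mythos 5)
Your proposal is correct, and its engine is the same as the paper's: a relative-entropy Lyapunov functional whose dissipation vanishes only at stationary points, combined with the irreducibility of Lemma~\ref{lem:irred} and the characterization of stationary distributions in Lemma~\ref{station}. The ``equality case of the $H$-theorem'' that you defer to an adaptation of~\cite{Sinetal} is precisely the paper's Lemma~\ref{le:relent}, proved there by splitting one step into a linear Markov step on the pair space $\O\times\O$ (where the positive-diagonal property~\eqref{posQ} forces the density $\r/\pi$ to be constant on each communicating class when the dissipation vanishes) followed by a marginalization step; your sketch of ``convexity defects plus conditional-independence defects'' is a looser description of exactly this decomposition. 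Where you genuinely diverge is the endgame. The paper proves an abstract statement (Theorem~\ref{th:genconv}) valid for any balanced kernel: a subsequential limit $\nu$ is stationary, irreducibility makes $\nu$ fully supported, and convergence of the whole sequence follows by switching the reference measure to $\nu$ and using monotonicity of $\cD(\cdot\Vert\nu)$ together with Pinsker's inequality---no characterization of stationary points is needed at that stage. You instead run a LaSalle/$\o$-limit-set argument and identify every $\o$-limit point directly as $\mu_{\bf J,h}$ via Lemma~\ref{station}, conservation of marginals (Lemma~\ref{lem:conser}) and uniqueness of the field given the marginals; this is a perfectly valid and somewhat more direct route for the Ising case, at the price of not yielding the general convergence theorem. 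Two refinements: the strict-decrease statement of Lemma~\ref{le:relent} holds for \emph{arbitrary} $q\in\cP(\O)$ (it uses full support of $\mu$ and the laziness~\eqref{posQ}, not full support of $q$), so irreducibility is not what rescues the equality case; its essential role in your argument is rather to keep the $\o$-limit set inside $\cP_+(\O)$, excluding degenerate stationary points (e.g.\ point masses, which are fixed by both kernels) so that the Gibbs-form characterization with finite fields and the marginal-matching identification can be applied. Also, the quantitative bound $H(q\Vert\mu)-H(\Phi(q)\Vert\mu)\ge c\,\|\Phi(q)-q\|_{\rm TV}^2$ you mention as an alternative is neither needed nor established in the paper; the qualitative equality case suffices.
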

We will prove Theorem~\ref{th:conv} as a consequence of a more general convergence theorem (Theorem~\ref{th:genconv} below), 
together with the irreducibility property established in Lemma~\ref{lem:irred}. 

Our general convergence theorem is based on the following more abstract framework for mass action kinetics.
Given a finite space of types~$\O$, consider the nonlinear mass action
dynamics $T_t(p)\in\cP(\O)$ defined by $T_t(p) = T_{t-1}(p) \circ T_{t-1}(p)$, $T_0(p)=p$, where the collision 
operator~$\circ$ is defined  as in \eqref{cwnew}, and $\cQ$ is now a generic 
probability kernel on $\O\times\O$, i.e., $\cQ(\si,\si';\t,\t')\geq 0$ for all $\si,\si',\t,\t'\in\O$ and 
\begin{equation}\label{kerQ}
\sum_{\t,\t'\in\O}\cQ(\si,\si';\t,\t')=1\,,\qquad \forall\,\si,\si'\in\O.
\end{equation}
As always we assume the mild exchange symmetry property 
\begin{equation}\label{exchsymQ}
\kernel{\sigma}{\sigma'}{\tau}{\tau'} = \kernel{\sigma'}{\sigma}{\tau'}{\tau}.
\end{equation}
Indeed, as a consequence of~\eqref{exchsymQ}, we may actually assume w.l.o.g.\ that $\cQ$ satisfies the additional
symmetries
\begin{equation}\label{symQ}
\cQ(\si,\si';\t,\t')=\cQ(\si,\si';\t',\t) = \cQ(\si',\si;\t,\t')\,,\qquad \forall\,\si,\si',\t,\t'\in\O.
\end{equation}
To see this, note that because of the symmetry of the factor $p(\si)p(\si')$ in the dynamics~\ref{cwnew},
one can always replace $\cQ$ in~\eqref{cwnew} by the symmetrized kernel 
\begin{equation}
\label{pq22sym}
\bar \cQ(\si,\si';\t,\t') = \frac12 (\cQ(\si,\si';\t,\t')+\cQ(\si',\si;\t,\t'))
\end{equation}
without altering the dynamics.  Using~\eqref{exchsymQ}, we immediately see that this symmetrized kernel
satisfies~\eqref{symQ}.  Accordingly, we will assume~\eqref{symQ} from now on.

We also assume that the kernel has {\em positive diagonal elements}, i.e.,
\begin{equation}\label{posQ}
\cQ(\si,\si';\si,\si')>0\,,\qquad \forall\,\si,\si'\in\O;
\end{equation}
in particular, this rules out periodic behavior.  However, note that, as usual, we do {\it not\/} assume that $\cQ$ is irreducible.
We will also not assume that $\cQ$ is reversible (detailed balanced), as it is in our Ising model systems; rather, it will be 
enough to assume the weaker property 
that there exists a strictly positive distribution $\mu\in\cP_+(\O)$ such that
\begin{equation}\label{piQ}
\sum_{\si,\si'\in\O}\mu(\si)\mu(\si')\cQ(\si,\si';\t,\t')=\mu(\t)\mu(\t')\,,\qquad \forall\,\t,\t'\in\O.
\end{equation}
This is equivalent to saying that the product distribution $\mu\otimes\mu\in\cP(\O\times\O)$ 
is {\it $\cQ$-invariant}, i.e., $(\mu\otimes\mu)\cQ = \mu\otimes\mu$.
If~\eqref{piQ} holds for some positive~$\mu$, we say that $\cQ$ is {\it balanced}\footnote{In the chemical reaction networks literature,
this property is known as ``complex balance".}.
Note that if $\cQ$ is detailed balanced (reversible), as defined in~\eqref{cw2qq},
then it is also balanced.  Moreover, it is easy to check that any~$\mu$ satisfying~\eqref{piQ}
must necessarily also be stationary for the mass-action dynamics defined by~$\cQ$.  Indeed, for any balanced
system, it will follow from the proof of Theorem~\ref{th:genconv} below that the converse is also true: any stationary~$\mu$ must satisfy~~\eqref{piQ} (see Proposition~ \ref{prop:invrev}).

The following general theorem says that, for any mass action kernel satisfying the above properties,
irreducibility of the initial distribution (as specified in Definition~\ref{def:irred}) is sufficient to guarantee
convergence.


\begin{theorem}\label{th:genconv}
Suppose the kernel $\cQ$ satisfies \eqref{symQ} and \eqref{posQ}, and is also balanced.
Then, for any irreducible initial distribution~$p$, we have that  $T_t(p)\to \nu$ as $t\to\infty$ for some stationary 
$\nu\in\cP_+(\O)$.  Moreover, $\nu\otimes \nu$ is $\cQ$-invariant.
\end{theorem}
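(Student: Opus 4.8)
The plan is to exploit relative entropy as a Lyapunov functional for the collision operator. Fix the positive distribution $\mu$ witnessing the balanced condition~\eqref{piQ}, and for $p\in\cP_+(\O)$ set $H(p) := \sum_{\sigma} p(\sigma)\log\frac{p(\sigma)}{\mu(\sigma)}$, the relative entropy of $p$ with respect to $\mu$. First I would show that $H$ is nonincreasing along the dynamics, i.e., $H(p\circ p)\le H(p)$. Writing $p\circ p = (p\otimes p)\cQ$ marginalized onto the first coordinate (using the symmetry~\eqref{symQ}, which makes the two marginals of $(p\otimes p)\cQ$ equal), this follows from two applications of convexity/data-processing for relative entropy: the map $\cQ$ is a stochastic kernel on $\O\times\O$ with invariant measure $\mu\otimes\mu$, so $H_{\O\times\O}((p\otimes p)\cQ \,\|\, \mu\otimes\mu) \le H_{\O\times\O}(p\otimes p\,\|\,\mu\otimes\mu) = 2H(p)$, and then taking the marginal can only decrease relative entropy, giving $H(p\circ p)\le \tfrac12 H_{\O\times\O}((p\otimes p)\cQ\,\|\,\mu\otimes\mu)\le H(p)$. (One must be slightly careful: the last inequality uses that $H_{\O\times\O}(q\,\|\,\mu\otimes\mu)\ge H(q_1\,\|\,\mu)+H(q_2\,\|\,\mu)$ for any joint law $q$, which is superadditivity of relative entropy under a product reference measure.)

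Next I would use irreducibility. By Definition~\ref{def:irred}, there are $\e>0$ and $t_0$ with $T_t(p)(\tau)\ge\e$ for all $t\ge t_0$; in particular the trajectory $\{T_t(p)\}_{t\ge t_0}$ lies in a compact subset $K$ of $\cP_+(\O)$, bounded away from the boundary of the simplex. On $K$ the entropy $H$ is finite and continuous, and by the previous step $H(T_t(p))$ is a nonincreasing sequence bounded below, hence converges to some limit $H_\infty$. Now take any subsequential limit $\nu$ of $T_t(p)$ (which exists by compactness of $K$, and satisfies $\nu\in K\subseteq\cP_+(\O)$). By continuity of the collision operator and of $H$ on $K$, we get $H(\nu\circ\nu)=H_\infty=H(\nu)$, so $\nu$ saturates the entropy inequality. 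The main work is then a \emph{rigidity} step: equality $H(\nu\circ\nu)=H(\nu)$ forces $\nu$ to be stationary, $\nu\circ\nu=\nu$, and moreover $\nu\otimes\nu$ to be $\cQ$-invariant. Equality in the data-processing step above, tracked through the two convexity inequalities, forces (i) $(\nu\otimes\nu)\cQ$ to have relative-entropy that splits as a sum over coordinates, which combined with the positive-diagonal assumption~\eqref{posQ} and the structure of $\cQ$ pins down $(\nu\otimes\nu)\cQ=\nu\otimes\nu$; and (ii) consequently the first marginal is preserved, $\nu\circ\nu=\nu$. I expect this rigidity analysis to be the main obstacle, since one has to argue that the only way both convexity inequalities can be tight simultaneously is the claimed one; the positive-diagonal condition is what prevents degenerate equality cases (e.g. periodic orbits permuting configurations).

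Finally, having shown every subsequential limit $\nu$ satisfies $\nu\circ\nu=\nu$ and $(\nu\otimes\nu)\cQ=\nu\otimes\nu$ with $H(\nu)=H_\infty$, I would upgrade subsequential convergence to full convergence. The set $S:=\{q\in K : q\circ q = q,\ H(q)=H_\infty\}$ is a nonempty compact set of fixed points of the continuous map $q\mapsto q\circ q$; one shows $T_t(p)\to S$ (any limit point lies in $S$), and then argues the limit is in fact a single point. The cleanest route is to combine $\dist(T_t(p),S)\to 0$ with a local argument near $S$, or alternatively to note that by Lemma~\ref{station} (or the explicit form of stationary measures in the Ising case) the stationary points form a nice manifold on which the conserved marginals select a unique element; in the abstract setting, uniqueness of the limit follows because $H$ is strictly convex and the conserved quantities plus $H_\infty$ determine $\nu$ uniquely. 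This yields $T_t(p)\to\nu$ for a single stationary $\nu\in\cP_+(\O)$ with $\nu\otimes\nu$ $\cQ$-invariant, as claimed. The deduction of Theorem~\ref{th:conv} is then immediate: Lemma~\ref{lem:irred} supplies irreducibility, Lemma~\ref{station} identifies $\nu=\mu_{\bf J,h}$, and Lemma~\ref{lem:conser} identifies the correct $\bf h$ via the conserved marginals.
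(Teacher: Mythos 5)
Your entropy-monotonicity step and your identification of subsequential limits as full-support stationary points essentially reproduce the paper's argument: the two-step decomposition (apply $\cQ$ on the pair space with invariant measure $\mu\otimes\mu$, then marginalize and use superadditivity of relative entropy) is exactly Lemma~\ref{le:relent}, and the ``rigidity'' you flag as the main obstacle — equality forces $(\nu\otimes\nu)\cQ=\nu\otimes\nu$, via constancy of the density on communicating classes together with the positive-diagonal hypothesis~\eqref{posQ} — is precisely what that lemma proves, so that part is fine modulo writing it out. The genuine gap is in your last step, upgrading subsequential convergence to convergence of the whole sequence. Theorem~\ref{th:genconv} concerns an \emph{abstract} balanced kernel, and in that setting none of your proposed routes is available: there are no ``conserved marginals'' (conservation of site marginals is a feature of the specific Ising kernels, Lemma~\ref{lem:conser}, not of general kernels satisfying \eqref{symQ}, \eqref{posQ}, \eqref{piQ}); strict convexity of $\cD(\cdot\Vert\mu)$ does not make the level set $\{H=H_\infty\}$ a singleton, so ``strict convexity plus conserved quantities plus $H_\infty$'' does not pin down the limit; and the ``local argument near $S$'' is unspecified — all you get abstractly is that the limit set is a compact connected set of stationary, full-support distributions with equal entropy, and nothing rules out a continuum of such points (in the Ising application the stationary set is the whole family $\{\mu_{\bf J,h}\}_{\bf h}$). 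Your alternative via Lemma~\ref{station} and conserved marginals does close the argument, but only for the Ising instance (Theorem~\ref{th:conv}), not for the abstract theorem under review.

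The missing idea, and the way the paper concludes, is a change of reference measure. Once one subsequential limit $\nu\in\cP_+(\O)$ is known to be stationary, the equality analysis (Proposition~\ref{prop:invrev}, i.e.\ the argument inside Lemma~\ref{le:relent}) shows $\nu\otimes\nu$ is $\cQ$-invariant, so $\cQ$ is balanced with respect to $\nu$ itself; since $\nu$ has full support, one may rerun the monotonicity lemma with $\mu$ replaced by $\nu$. Then $\cD(T_t(p)\Vert\nu)$ is nonincreasing and has a subsequence converging to $\cD(\nu\Vert\nu)=0$, hence converges to $0$ along the full sequence, and Pinsker's inequality gives $T_t(p)\to\nu$. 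This simultaneously proves uniqueness of the limit and the claimed $\cQ$-invariance of $\nu\otimes\nu$, which is exactly the step your plan leaves unproved.
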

\noindent
Note that, in general, the limit point~$\nu$ will depend on the initial distribution~$p$.  

The first ingredient  in the proof of Theorem~\ref{th:genconv} is decay of relative entropy. 
Let $\cD(\cdot\Vert\cdot) $ denote the relative entropy, or KL-divergence, for probability measures on $\O$.
I.e., $\cD(p\Vert q) := \sum_{\o\in\O}p(\o)\log\frac{p(\o)}{q(\o)}$. 
\begin{lemma}\label{le:relent}
Suppose the kernel $\cQ$ satisfies \eqref{symQ} and \eqref{posQ}. If $\cQ$ is balanced w.r.t.~$\mu$, then
\begin{align}
\label{pqconver}
\cD(p\circ p\Vert\mu) < \cD(p\Vert \mu),
\end{align} 
for any non-stationary $p\in\cP(\O)$.
\end{lemma}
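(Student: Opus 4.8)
The plan is to exploit the fact that $\cD(\cdot\Vert\mu)$ decomposes through the derivation step $p\mapsto p\circ p$ as a two-stage process: first form the product measure $p\otimes p$ on $\O\times\O$, then apply the stochastic kernel $\cQ$, and finally marginalize. First I would write, using~\eqref{cw2} and~\eqref{symQ}, that $p\circ p$ is the first (equivalently, by symmetry, the second) marginal of the measure $(p\otimes p)\cQ$ on $\O\times\O$. Since relative entropy is non-increasing under pushing a measure through a stochastic kernel (the data-processing inequality), and under taking marginals, we get
\begin{align}
\label{eq:chain}
\cD\bigl((p\circ p)\otimes(p\circ p)\,\big\Vert\,\mu\otimes\mu\bigr)
\;\le\;\cD\bigl((p\otimes p)\cQ\,\big\Vert\,(\mu\otimes\mu)\cQ\bigr)
\;\le\;\cD\bigl(p\otimes p\,\big\Vert\,\mu\otimes\mu\bigr),
\end{align}
where in the middle I have used that $(\mu\otimes\mu)\cQ=\mu\otimes\mu$ by the balanced property~\eqref{piQ}. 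Tensorization of relative entropy gives $\cD(p\otimes p\Vert\mu\otimes\mu)=2\cD(p\Vert\mu)$, and the subadditivity of entropy under marginalization (or again data-processing, projecting onto one coordinate) gives $\cD((p\circ p)\otimes(p\circ p)\Vert\mu\otimes\mu)=2\cD(p\circ p\Vert\mu)$, so dividing~\eqref{eq:chain} by $2$ yields the non-strict inequality $\cD(p\circ p\Vert\mu)\le\cD(p\Vert\mu)$.

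The remaining and genuinely delicate point is to upgrade this to the \emph{strict} inequality~\eqref{pqconver} whenever $p$ is non-stationary; this is the main obstacle. Equality in~\eqref{eq:chain} forces equality in each of the data-processing steps, and the equality case of data-processing for relative entropy is classical: $\cD(\nu K\Vert\pi K)=\cD(\nu\Vert\pi)$ holds iff the ``reverse'' kernels of $K$ started from $\nu$ and from $\pi$ agree, i.e.\ the posteriors coincide $\nu$-a.s. Concretely, equality in the second step of~\eqref{eq:chain} means that for all $(\sigma,\sigma')$ in the support of $p\otimes p$ (which, since we may assume $p$ has full support after one step by~\eqref{posQ}, is all of $\O\times\O$, or at least a set large enough to pin things down), the conditional distribution of $(\sigma,\sigma')$ given the output $(\tau,\tau')$ is the same whether the input was $p\otimes p$ or $\mu\otimes\mu$. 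Using the positive-diagonal assumption~\eqref{posQ}, the event $(\tau,\tau')=(\sigma,\sigma')$ has positive $\cQ$-probability, so one can extract the pointwise relation $p(\sigma)p(\sigma')/\bigl(\mu(\sigma)\mu(\sigma')\bigr)$ equals $p(\tau)p(\tau')/\bigl(\mu(\tau)\mu(\tau')\bigr)$ whenever $\cQ(\sigma,\sigma';\tau,\tau')>0$; that is, the ratio $p/\mu$, viewed multiplicatively on pairs, is constant along the communicating structure of $\cQ$. Feeding this back into the dynamics~\eqref{cw2} and using balancedness, one checks that $p\circ p=p$, i.e.\ $p$ is stationary, contradicting the hypothesis. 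Hence strict inequality must hold.

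A cleaner route to the strict inequality, which I would actually prefer to write up, is to track equality in the \emph{marginalization} step instead: $2\cD(p\circ p\Vert\mu)=\cD((p\circ p)\otimes(p\circ p)\Vert\mu\otimes\mu)$ would need $(p\otimes p)\cQ$ to be a \emph{product} measure equal to $(p\circ p)\otimes(p\circ p)$; but regardless of which equality one tracks, the conclusion is that equality throughout~\eqref{eq:chain} forces the likelihood ratio $p/\mu$ to be $\cQ$-harmonic in the appropriate sense, which by~\eqref{posQ} and~\eqref{symQ} implies $p\circ p = p$. Thus, combining the chain~\eqref{eq:chain} with this equality analysis, we obtain $\cD(p\circ p\Vert\mu)<\cD(p\Vert\mu)$ for every non-stationary $p$, which is exactly~\eqref{pqconver}. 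The one technical caveat to handle carefully is that $p$ itself may not have full support; but $p\circ p$ does have full support by~\eqref{posQ} applied to the interaction $(\sigma,\sigma)\mapsto(\sigma,\sigma)$, so one can either first replace $p$ by $p\circ p$ (the inequality for the new point implies monotonicity for the original, and a separate easy argument handles the first step) or simply restrict all the support-based arguments above to $\supp(p)\times\supp(p)$, which is all that the equality analysis needs.
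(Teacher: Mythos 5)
Your proof is correct and follows essentially the same route as the paper's: the same lift to the pair space with the two-step decomposition (apply $\cQ$ to $p\otimes p$, then marginalize), the same use of balancedness and of the superadditivity of relative entropy with respect to the product reference $\mu\otimes\mu$, with strictness obtained from the equality case of the data-processing inequality plus the positive-diagonal assumption \eqref{posQ} --- which is exactly what the paper proves by hand via the time-reversal kernel $\cQ^*$ and strict convexity of $x\log x$. One small correction to your closing caveat: \eqref{posQ} does not give $p\circ p$ full support (it only yields $\supp(p)\subseteq\supp(p\circ p)$), but no full-support assumption is actually needed, since the posterior-agreement condition at outputs in the support of $(p\otimes p)\cQ$, combined with the diagonal transitions, already forces $(p\otimes p)\cQ=p\otimes p$ and hence $p\circ p=p$.
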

\begin{proof}
Define $\r := p\otimes p$ and $\pi :=\mu\otimes \mu$.  The assumption that $\cQ$ is balanced w.r.t.~$\mu$
implies that $\pi\cQ=\pi$, while the fact that $p$ is not stationary implies that $\r\cQ\ne\r$.  Now we 
may equivalently write the operation~\eqref{cwnew} in the form
\begin{equation}\label{eq:ent3}
   (p\circ p)(\tau) = \sum_{\si,\si'\!,\tau'} \r(\si,\si') \kernel{\sigma}{\sigma'}{\tau}{\tau'}=\sum_{\tau'}(\r \cQ)(\t,\t').
\end{equation}   
Equation~\eqref{eq:ent3} suggests a two-step decomposition of $p\mapsto p\circ p$.  The first step is 
a mapping on the pair space $\O\times\O$, which takes the product distribution $\r=p\otimes p$ to the
distribution $\r\cQ$ (which is not typically a product).  The second step is the mapping
back down to~$\O$ obtained by marginalizing out the second element~$\tau'$ of the pair.  
We argue that each of these steps separately decreases the relative entropy w.r.t.~$\mu$, the first step yielding a strict decrease whenever $p$ is not stationary.

For the first step, note that $\cQ$ defines one step of a 
Markov chain on $\O\times\O$. Simple convexity considerations imply the inequality $\cD(\r\cQ\Vert\pi) \leq \cD(\r\Vert\pi)$.  
We claim the stronger property that 
\begin{equation}\label{eq:ent10}
\cD(\r\cQ\Vert\pi) < \cD(\r\Vert\pi)
\end{equation} 
whenever $\r\cQ\ne\r$. To prove this, we use the following basic  
fact that applies to any (not necessarily irreducible or reversible) Markov chain with a finite state space, with positive diagonal entries, and with an everywhere positive stationary distribution~$\pi$. Let $\cC_1,\dots,\cC_k$ denote the communicating classes (irreducible components) associated with~$\cQ$. 
Note that the assumption $\pi>0$ implies that there is no transient state, so that the~$\cC_i$ partition the state space $\O\times\O$. Let $F=\r/\pi$ denote the density of $\r$ w.r.t.\ $\pi$ and note that $\r\cQ=\r$ iff $F$ is constant within each $\cC_i$. Thus, to prove \eqref{eq:ent10} it is sufficient to show that  the identity 
\begin{equation}\label{eq:ent11}
\cD(\r\cQ\Vert\pi) = \cD(\r\Vert\pi)\,
\end{equation} 
holds iff $F$ is constant within each $\cC_i$. Direct computation shows that
\begin{equation}\label{eq:ent12}
\cD(\r\Vert\pi)-\cD(\r\cQ\Vert\pi) =  \pi(F\log F) - \pi(\cQ^*F \log \cQ^* F)\,,
\end{equation} 
where $\cQ^*(u,v)=\pi(v)\cQ(v,u)/\pi(u)$ is the time-reversal of $\cQ$. Clearly, if $F$ is constant within each $\cC_i$ then $\cQ^*F =F$ and \eqref{eq:ent11} holds. To see the converse, observe that the strict convexity of $x\mapsto x\log x$ implies that 
\[
H:=\cQ^*(F\log F)-\cQ^*F \log \cQ^* F\geq 0
\] 
and that $H(u)=0$ for some state $u$ if and only if  $F (v')=F(v)$ for all $v,v'$ in the neighborhood of $u$ in the Markov chain graph (i.e., the set of $v$ such that $\cQ(u,v)>0$).  By the assumption~\eqref{posQ} of positive diagonal elements for~$\cQ$, this is equivalent
to $F (v)=F(u)$ for all $v$ such that $\cQ(u,v)>0$. Therefore, $H=0$ everywhere implies that $F$ is constant within each communicating class~$\cC_i$. On the other hand,  it follows from~\eqref{eq:ent12}  and the invariance $\pi \cQ^*=\pi$ that \eqref{eq:ent11} is equivalent to $\pi(H)=0$, which by the  positivity assumption on $\pi$, and the fact that $H\geq 0$, is equivalent to $H=0$ everywhere. This ends the proof of  \eqref{eq:ent10}.

For the second (marginalization) step, we appeal to the well-known fact that, among all distributions on $\O\times\O$
with fixed marginals, the relative entropy w.r.t.\ a product measure $\pi=\mu\otimes\mu$ is minimized when
that distribution is a product distribution. Namely,  if a probability $\nu\in\cP(\O\times\O)$ has marginals $\nu_1,\nu_2$ on the first and second element respectively, one has
 \begin{equation}\label{eq:ent14}
\cD(\nu\Vert\mu_1\otimes\mu_2)\geq \cD(\nu_1\Vert\mu_1)+  \cD(\nu_2\Vert\mu_2)\,,
\end{equation} 
for all probability measures $\mu_1,\mu_2\in\cP_+(\O)$.
To see this, recall the variational principle for relative entropy, asserting that
\[
\cD(\nu\Vert\z) = \sup \{\nu(F) - \log\z(e^F)\},
\] 
where $\nu,\z$ are arbitrary probability measures and $F$ ranges over all real functions. Now, take $\z=\mu_1\otimes\mu_2$ and $F(\si,\si')=\log f_1(\si) + \log f_2(\si')$ where $f_1=\nu_1/\mu_1$ and $f_2=\nu_2/\mu_2$, and observe that 
$\nu(F) = \cD(\nu_1\Vert\mu_1)+  \cD(\nu_2\Vert\mu_2)$ while $\z(e^F) =1 $ because of the product structure.  This proves \eqref{eq:ent14}.

Now, in view of the symmetry \eqref{symQ}, the marginal of $\r\cQ$ on the first element of the pair equals the marginal of $\r\cQ$ on the second element of the pair, and they are both equal to $p\circ p$. 
It follows that 
\begin{equation}\label{eq:ent15}
\cD(\r\cQ\Vert\mu\otimes\mu)\geq 2\cD(p\circ p\Vert \mu)\,.
\end{equation} 
In conclusion, if  $p$ is not stationary we have shown that
\begin{equation}\label{eq:aent15}
   \cD(p\Vert \mu) = \frac12\,\cD(\r\Vert \pi) > \frac12\,\cD(\r\cQ\Vert\pi)
        \geq \cD(p\circ p\Vert \mu).  
        \end{equation}
This completes the proof of \eqref{pqconver} and hence the lemma.
\end{proof} 

 \begin{proof}[Proof of Theorem \ref{th:genconv}]
Let $\cQ$ be balanced w.r.t.~$\mu\in\cP_+(\O)$.
Lemma~\ref{le:relent} shows that $\cD(T_t(p)\Vert \mu)$ is monotonically strictly decreasing with~$t$.
Hence, since $\cD(T_t(p)\Vert \mu)\geq 0$,  we know that $\cD(T_t(p)\Vert \mu)$ converges to a limit, say~$d_\mu$.
By compactness, there exists $\nu\in\cP(\O)$ and a subsequence $t_1,t_2,\ldots$ such that $T_{t_i}(p)\to\nu$ as $i\to\infty$.  
Also, it must be the case that $\cD(T_{t_i}(p)\Vert \mu) \to \cD(\nu\Vert\mu)$, and therefore
\begin{align}
\label{pqconvers}
  d_\mu=\cD(\nu\Vert\mu) =\lim_{t\to\infty} \cD(T_t(p)\Vert \mu)= \inf_{t\in\bbN} \,\cD(T_t(p)\Vert \mu).
\end{align}
Moreover, $\nu$ must be stationary.
To see this, assume for contradiction that it is not.
Then, by  Lemma~\ref{le:relent}, one step of the dynamics must strictly decrease the relative entropy and therefore, for some $\e>0$, 
\begin{equation}\label{eq:conv1}
\cD(T_1(\nu)\Vert \mu) \leq \cD(\nu\Vert \mu) - \e.
\end{equation}
On the other hand, by continuity of the map~$T_1$ and the function~$\cD(\cdot\Vert\mu)$,
since $T_{t_i}(p)\to\nu$, there exists $t_\e\in\bbN$ sufficiently large such that 
\[
  \cD(T_1(T_{t_\e}(p))\Vert \mu) \leq \cD(T_1(\nu)\Vert \mu) +\e/2.
\]
Combining this with~\eqref{eq:conv1} gives $$
\cD(T_{t_\e + 1}(p)\Vert \mu)  \leq \cD(\nu\Vert \mu) - \e/2, $$
which contradicts \eqref{pqconvers}.

Now note that, if $p$ is irreducible, then it must be the case that $\nu\in\cP_+(\O)$. 
Thus we have established that~$\nu$ is a stationary point with full support. Therefore, we may take $\mu=\nu $ in \eqref{pqconvers}, so that $\cD(T_t(p)\Vert \nu)\to \cD(\nu\Vert\nu)=0$ as $t\to\infty$. The latter implies, e.g., by Pinsker's inequality,  that $T_t(p)\to\nu$, completing the proof.
\end{proof}

Before proceeding with the proof of Theorem~\ref{th:conv}, we pause to note the following simple consequences of
the arguments given above.
\begin{proposition}\label{prop:invrev}
Suppose $\cQ$ is a balanced kernel satisfying \eqref{symQ} and \eqref{posQ} and let $p\in\cP(\O)$.
Then $p$ is stationary \,iff\, $p\otimes p$ is $\cQ$-invariant. Moreover, if $\cQ$ is also detailed balanced, then $p$ is stationary \,iff\, $p\otimes p$ is $\cQ$-reversible (i.e.,  iff \eqref{eq:detbal} holds with $\mu$ replaced by $p$). 
\end{proposition}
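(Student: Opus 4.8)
The plan is to extract both equivalences directly from the machinery already assembled for Lemma~\ref{le:relent} and Theorem~\ref{th:genconv}. For the first equivalence, one direction is immediate: if $p\otimes p$ is $\cQ$-invariant, then writing $\r = p\otimes p$, equation~\eqref{eq:ent3} gives $(p\circ p)(\t) = \sum_{\t'}(\r\cQ)(\t,\t') = \sum_{\t'}\r(\t,\t') = p(\t)$, so $p$ is stationary. This direction does not even require $\cQ$ to be balanced. For the converse, suppose $p$ is stationary, i.e., $p\circ p = p$. The key observation is that the two-step decomposition in the proof of Lemma~\ref{le:relent} — first $\r\mapsto\r\cQ$ on the pair space, then marginalization back to $\O$ — shows that $p\circ p = p$ forces equality in both steps of~\eqref{eq:aent15}. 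Concretely, if $p\circ p = p$ then $\cD(p\circ p\Vert\mu) = \cD(p\Vert\mu)$, so neither inequality in $\tfrac12\cD(\r\Vert\pi) \ge \tfrac12\cD(\r\cQ\Vert\pi) \ge \cD(p\circ p\Vert\mu)$ can be strict. Equality in the first inequality means (by the argument around~\eqref{eq:ent10}--\eqref{eq:ent12}) that the density $F = \r/\pi$ is constant on each communicating class $\cC_i$ of $\cQ$, which is precisely the statement $\r\cQ = \r$, i.e., $p\otimes p$ is $\cQ$-invariant.

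The main point to get right — and I expect this to be the only real subtlety — is that equality in the first step of~\eqref{eq:aent15} genuinely characterizes $\r\cQ = \r$. But this is exactly what was established in the proof of Lemma~\ref{le:relent}: the identity $\cD(\r\cQ\Vert\pi) = \cD(\r\Vert\pi)$ holds iff $F$ is constant within each $\cC_i$, iff $\r\cQ = \r$. (One should note that this argument uses the positive-diagonal hypothesis~\eqref{posQ} and the positivity of $\pi = \mu\otimes\mu$, both of which are in force; it does not use irreducibility of $\cQ$, which is crucial since $\cQ$ is typically reducible here.) So the first equivalence follows without any new computation, simply by inspecting where the strict inequalities in Lemma~\ref{le:relent} came from.

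For the second equivalence, suppose in addition that $\cQ$ is detailed balanced, say with respect to some positive $\mu$ (equivalently, $\mu\otimes\mu$ is $\cQ$-reversible). We must show: $p$ stationary iff $p\otimes p$ is $\cQ$-reversible, i.e.,
\begin{equation}
p(\si)p(\si')\kernel{\sigma}{\sigma'}{\tau}{\tau'} = p(\t)p(\t')\kernel{\tau}{\tau'}{\sigma}{\sigma'}\,,\qquad\forall\,\si,\si',\t,\t'.
\end{equation}
The backward direction is trivial: reversibility of $p\otimes p$ implies $\cQ$-invariance of $p\otimes p$, hence $p$ is stationary by the first part. For the forward direction, I would argue as follows. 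Since $p$ is stationary, the first equivalence gives that $p\otimes p$ is $\cQ$-invariant, i.e., $F = (p\otimes p)/(\mu\otimes\mu)$ is constant on each communicating class $\cC_i$. Now on each $\cC_i$ the chain $\cQ$, restricted to $\cC_i$, is irreducible with stationary distribution proportional to $\pi|_{\cC_i} = (\mu\otimes\mu)|_{\cC_i}$; since $\cQ$ is reversible w.r.t.\ $\pi = \mu\otimes\mu$, its restriction to $\cC_i$ is reversible w.r.t.\ $\pi|_{\cC_i}$. The distribution $(p\otimes p)|_{\cC_i}$, being a constant multiple of $\pi|_{\cC_i}$ on $\cC_i$ and zero off $\cC_i$, is therefore also reversible for $\cQ$ restricted to $\cC_i$. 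Since every pair $(\si,\si'),(\t,\t')$ with $\kernel{\sigma}{\sigma'}{\tau}{\tau'}>0$ (or $\kernel{\tau}{\tau'}{\sigma}{\sigma'}>0$) lies in a common communicating class, the detailed balance relation for $p\otimes p$ holds for all such pairs, and it holds trivially (both sides zero) otherwise. This establishes $\cQ$-reversibility of $p\otimes p$ and completes the proof. The only delicate bookkeeping here is the reduction to communicating classes and the observation that detailed balance is a ``local'' condition checked pairwise, so it suffices to verify it within each $\cC_i$; but this is routine once the class decomposition from Lemma~\ref{le:relent} is in hand.
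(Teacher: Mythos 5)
Your proof is correct and follows essentially the same route as the paper: you extract from the equality case of Lemma~\ref{le:relent} (via the chain~\eqref{eq:aent15}) that stationarity forces $p\otimes p$ to be a constant multiple of $\mu\otimes\mu$ on each communicating class, which gives $\cQ$-invariance, and then note that detailed balance only involves transitions within a class, so it transfers to $p\otimes p$. Your write-up just makes explicit the steps the paper cites from the proof of Lemma~\ref{le:relent}; nothing is missing.
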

\begin{proof}
Set $\r=p\otimes p$. The only nontrivial implications are (a)~$p\circ p = p \Rightarrow \r \text{\;is\;} \cQ$-invariant and (b)~if $\cQ$ is detailed balanced, then $p\circ p = p \Rightarrow \r \text{\;is\;} \cQ$-reversible.
The argument in the proof of Lemma~\ref{le:relent} shows that if $p\circ p = p$ then $\r$ is a constant multiple of $\mu\otimes\mu$ over the communicating classes of~$\cQ$. 
This implies~(a). To prove~(b), notice that if the detailed balance condition~\eqref{eq:detbal} holds then it continues to hold if $\mu\otimes\mu$ is replaced by any $\r$ that is a constant multiple of $\mu\otimes\mu$ over the communicating classes of~$\cQ$, since only transitions within such components are relevant in~\eqref{eq:detbal}.
\end{proof}

We conclude this section with a proof of our convergence result for the nonlinear Ising model dynamics, Theorem~\ref{th:conv}.  This will follow immediately
from the general criterion for convergence in Theorem~\ref{th:genconv}, together with the irreducibility property we proved in Lemma~\ref{lem:irred}.

\begin{proof}[Proof of Theorem \ref{th:conv}]
In light of Theorem~\ref{th:genconv}, it suffices to check that both our kernels satisfy all the required properties and that any  $p\in\cP(\O)$ with nondegenerate marginals
is irreducible.  The latter fact, for both nonlinear block and Glauber dynamics, is precisely the statement of Lemma~\ref{lem:irred}. 
The fact that both kernels are balanced follows from the fact that both are reversible.
To check the properties~\eqref{symQ} and~\eqref{posQ}, notice that the diagonal entries are positive for both the nonlinear block dynamics and the nonlinear Glauber dynamics. Moreover, the pair symmetry property \eqref{symQ} is easily seen to be satisfied by the block dynamics kernel~\eqref{pq20}. 
Finally, the single site kernel as written in~\eqref{pq22} does not satisfy~\eqref{symQ}, but it does satisfy the exchange symmetry~\eqref{exchsymQ}
and hence is equivalent to a symmetrized kernel~$\bar\cQ$ that satisfies~\eqref{symQ}, as discussed earlier.
\end{proof}
\begin{remark}\label{rem:bc2}
We proved Lemma~\ref{lem:irred} and Theorem~\ref{th:conv} under the assumption~\eqref{margo} of nondegenerate marginals. 
However, there is no difficulty in extending to the general case of an arbitrary $p\in\cP(\O)$, where some spins may be 
deterministically set to~$+1$ or~$-1$.  It is easy to check that the proofs of Lemmas~\ref{lem:conser} and~\ref{lem:irred}
and Theorem~\ref{th:conv} continue to hold in this setting,
with the external fields at the pinned sites taken to be $\pm\infty$ as discussed in Remark~\ref{rem:bc}.
\end{remark}

\begin{remark}\label{rem:qds}
Theorem~\ref{th:genconv} shows that, for balanced systems, convergence to a stationary point with full support
is guaranteed provided one can prove that the trajectory starting from a given initial distribution~$p$ eventually remains 
uniformly bounded away from zero everywhere; i.e., no type ``dies out".  This observation is already known in the
chemical reaction networks literature~\cite{SM00}.  However, we have provided an alternative proof here for several
reasons: (i)~we are working in discrete rather than continuous time as in the reaction networks community, which necessitates
different arguments; (ii)~we have made extensive use of probabilistic, rather than dynamical systems concepts in our proof;
and (iii)~we aim to make this paper self-contained.  We point out also that our proof follows the same lines as that in~\cite[Theorem~2]{Sinetal}
for the restricted case where $\cQ$ is symmetric (i.e., reversible w.r.t.\ the uniform distribution~$\mu$),
while correcting some omissions in that earlier proof: namely, the assumption of positive diagonal elements~\eqref{posQ}
and the requirement  that the initial condition~$p$ be irreducible.
In light of Theorem~\ref{th:genconv} the key to the proof
of our convergence result  Theorem~\ref{th:conv} for the nonlinear Ising model dynamics is establishing irreducibility, as we do in Lemma~\ref{lem:irred}.
With respect to progress on the Global Attractor Conjecture, the most interesting question here seems to be
that of identifying minimal assumptions on the kernel~$\cQ$ that guarantee such an irreducibility property; we leave this question for future work.
\end{remark}

Theorem \ref{th:conv} provides no quantitative estimate on the {\it rate\/} of convergence to stationarity. 
In particular, there is no explicit dependence on the size of the system~$n$. In analogy with the mixing time analysis
for linear Markov chains, in the remainder of the paper we will study the rate of convergence to equilibrium under the
assumption that the interactions in~${\bf J}$ are sufficiently weak (usually referred to as the ``high temperature" regime).

\section{The nonlinear block dynamics}\label{sec:nonlinearbd}
Let $ T_t(p)$, $t\in\bbN$, denote the evolution of the initial distribution $p\in\cP(\O)$ under the nonlinear block dynamics \eqref{pq20}.  From Theorem \ref{th:conv} we know that for any fixed interaction matrix ${\bf J}$, and any $p\in\cP(\O)$, one has the convergence 
\begin{gather}\label{convergio}
T_t(p)\;\to\; \mu_{\bf J,h}\qquad \hbox{\rm as $t\to\infty$}\,,
\end{gather}
where ${\bf h}$ is the unique vector of external fields such that $ \mu_{\bf J,h}$ and the initial state $p$ have the same marginals at $x$, for all $x\in V$. Our main result for the nonlinear block dynamics (Theorem~\ref{thm:main2} in the introduction) establishes a tight bound on the rate of convergence in \eqref{convergio} as a function of the cardinality $n=|V|$, under the Dobrushin-type high-temperature condition on the interaction matrix~${\bf J}$.  For convenience we restate this result here.
\begin{theorem}
\label{th:mainthla}
There exist absolute constants $\d_0>0$, $c>0$ and $C>0$ such that, if
\begin{gather}\label{Dcondition}
\max_x\sum_{y\in V} |J_{xy}|\leq \d_0\,,
\end{gather}
 then, for any $p\in\cP(\O)$, $t\in\bbN$,
\begin{gather}\label{convergi}
\| T_t(p)-\mu_{\bf J,h}\|_{\rm TV}\leq Cn^2e^{-c\,t},
\end{gather}
where ${\bf h}$ is the unique choice of external fields such that $p_x=(\mu_{\bf J,h})_x$ for all $x\in [n]$. In particular, for any $\e>0$, one has $\|T_t(p)-\mu_{\bf J,h}\|_{\rm TV}\leq \e$ as soon as $t\geq \frac2c\log n + C_1(\e)$, where $C_1(\e)=\frac1c\log(C/\e)$.
\end{theorem}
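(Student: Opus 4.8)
The plan is to prove the total-variation bound \eqref{convergi}; the last sentence of the theorem is then immediate, since $Cn^2e^{-ct}\le\e$ precisely when $t\ge\frac1c\log(Cn^2/\e)=\frac2c\log n+\frac1c\log(C/\e)$.

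To prove \eqref{convergi} I would work with the derivation-tree representation of $T_t(p)$ from Section~\ref{sec:fragment}: $T_t(p)$ is the law of the configuration at the root of a complete binary tree of depth~$t$ whose $2^t$ leaves all carry~$p$ and whose internal nodes apply the block collision~$\circ$. The aim is to couple this root configuration with a sample from $\mu_{\bf J,h}$ and show that the coupling fails only on an event of probability $\le Cn^2e^{-ct}$. The first step is a structural analysis of a single block step: given the configurations $\sigma,\sigma'$ entering a node, the swap set~$\L$ is drawn with probability $\propto\mu(\sigma'_\L\sigma_{\L^c})\mu(\sigma_\L\sigma'_{\L^c})$, and expanding the Gibbs weights shows that the dependence on~$\L$ is supported entirely on the disagreement set $D=\{x:\sigma_x\ne\sigma'_x\}$, where it takes the form of a weak Ising law in the variables $\theta_x=2\ind_{x\in\L}-1$ with couplings bounded by $2|J_{xy}|$. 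I would sample this weak Ising law by a coupling with a non-uniform Erd\H{o}s--R\'enyi graph on~$D$ in which edge $\{x,y\}$ appears with probability of order~$|J_{xy}|$ and, conditionally on the graph, the swap bits are resampled independently across connected components. The upshot is a locality property: at each node the spin inherited by a site, and the randomness consumed in producing it, depend on the incoming configurations only through the connected component of that site in the revealed graph.

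Iterating down the tree produces the ``fragmentation with noise'' process: for each site~$x$ one tracks a \emph{dependence cluster} descending through the tree, which absorbs neighbouring sites whenever a noise edge is revealed---an event of probability $O(|J_{xy}|)$ per level---and which otherwise \emph{fragments}, two sites sharing a node splitting into different subtrees at the next level with probability at least~$\tfrac12$. Under \eqref{Dcondition} with $\delta_0$ a sufficiently small absolute constant, the total birth rate $\sum_y|J_{xy}|\le\delta_0$ is dominated by this fragmentation death rate, so the cluster size is stochastically dominated by a subcritical branching process and dies out geometrically. Let $\cG$ be the good event that by depth~$t$ every dependence cluster has been resolved: the relevant sites grounded at distinct leaves with no surviving noise. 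I would then argue that conditionally on~$\cG$ the root configuration is distributed \emph{exactly} as $\mu_{\bf J,h}$---the leaves supply independent spins with the correct single-site marginals (here one uses Lemma~\ref{lem:conser} together with the defining property of~$\bf h$), while the recorded history of the noise edges encodes precisely the Gibbs conditional structure that re-creates the equilibrium correlations as one passes back up the tree.

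It then remains to estimate $\Prob(\cG^c)$, since $\| T_t(p)-\mu_{\bf J,h}\|_{\rm TV}\le\Prob(\cG^c)$ by the coupling. As in the ${\bf J}=0$ computation of Section~\ref{sec:fragment}, the dominant contribution is a union bound over the $\binom n2$ pairs of sites that fail to be separated by the fragmentation, each contributing at most $e^{-ct}$ for a suitable absolute constant $c>0$ (slightly below $\log 2$, the rate in the noiseless case), with the probability of any surviving noise at depth~$t$ being of the same or smaller order. This gives $\Prob(\cG^c)\le Cn^2e^{-ct}$ and hence \eqref{convergi}. I expect the main obstacle to be the exact identification of the conditional law on~$\cG$: showing that conditioning on~$\cG$ reproduces $\mu_{\bf J,h}$ on the nose rather than approximately, which forces the noise-augmented fragmentation process to be set up so that it represents the nonlinear dynamics faithfully at every node---precisely what the Erd\H{o}s--R\'enyi coupling and the bookkeeping of the noise history are designed to achieve. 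A secondary technical point is checking that the effective branching rate stays below~$1$ under \eqref{Dcondition}, uniformly over the signed, configuration-dependent couplings appearing at each node.
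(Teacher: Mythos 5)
Your overall architecture (derivation tree, Erd\H{o}s--R\'enyi coupling of the swap-set law, a fragmentation-with-noise process dominated by a subcritical branching process, then a union bound over pairs of sites) is the same as the paper's, but there is a genuine gap at the concluding step. You claim that conditionally on the good event $\cG$ (all dependence clusters resolved by depth $t$) the root configuration is distributed \emph{exactly} as $\mu_{\bf J,h}$, and you yourself flag this as the main obstacle. This exact identification is not only unproven in your sketch, it is false in general: writing $T_t(p)=\sum_{(\vec G,\vec B)}\widehat\nu(\vec G,\vec B)\,T_t(p\mid\vec G,\vec B)$, stationarity gives $\mu_{\bf J,h}=\sum_{(\vec G,\vec B)}\widehat\nu(\vec G,\vec B)\,T_t(\mu_{\bf J,h}\mid\vec G,\vec B)$ \emph{including} the contribution of the bad event, so the conditional law of a stationary-start tree on $\cG$ need not equal $\mu_{\bf J,h}$, and hence neither need yours. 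The paper avoids this entirely: it runs a second tree with all leaves carrying $\mu_{\bf J,h}$, driven by the \emph{same} auxiliary randomness $(\vec G,\vec B)$, and proves (Theorem~\ref{th:mainarb}) that on the extinction event $\{\cF_t=\emptyset\}$ the two conditional root laws coincide, because each conditional law depends on the leaf distributions only through their marginals on the fragments $F^{(t)}_i$, which are empty or singletons and therefore matched by the hypothesis on marginals (Lemma~\ref{lem:conser}). Stationarity \eqref{eq:inv} then gives $\|T_t(p)-\mu_{\bf J,h}\|_{\rm TV}\le\Prob(\cF_t\neq\emptyset)$, with the probability bounded in Lemma~\ref{fraglemma}. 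You should replace your ``exact conditional law'' claim with this two-copy comparison; without it the coupling you describe does not yield \eqref{convergi}.

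A second, related gap is in your single-step coupling: you put the random graph on the disagreement set $D(\si,\si')$ with edge probabilities of order $|J_{xy}|$. But then the law of the revealed graph depends on the incoming configurations, hence on the history and on the initial distribution, and you can no longer factor the auxiliary randomness out as a measure $\widehat\nu$ independent of $p$ --- which is exactly what the whole backward bookkeeping and the definition of the fragmentation-with-noise process require. The paper's Lemma~\ref{lem:decca} resolves this by a stochastic-domination (monotone Glauber coupling) argument showing that the configuration-dependent graph arising from the high-temperature expansion is dominated by a \emph{configuration-independent} inhomogeneous Erd\H{o}s--R\'enyi measure $\nu_{\bf J}$ on all of $V$ with weights $e^{4|J_{xy}|}-1$; only the conditional law on $V_G$ (which factorizes over components) retains any dependence on $(\si,\si')$. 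Your sketch needs this domination step, or an equivalent device, for the decomposition and the subsequent branching-process analysis to be well defined.
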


\subsection{Main ideas of the proof}\label{subsec:sketch}
Before embarking on the details of the proof, we give a high level description of the main steps. 
By symmetry we may rewrite the operator~\eqref{consepq} in the form
\begin{equation}
\label{pq2}
(p\circ q) (\t)= \sum_{\si,\si'}p(\si)q(\si')
\sum_{\L\subseteq V}\g(\L\tc\si,\si')\ind_{\t=\si_\L\si'_{\L^c}},
\end{equation}
where 
\begin{gather}\label{gammala}
\g(\L\tc\si,\si') = \frac{\mu(\si_\L\si'_{\L^c})\mu(\si'_\L\si_{\L^c})}
{\sum_{A\subseteq V}\mu(\si_A\si'_{A^c})\mu(\si'_A\si_{A^c})}\,.
\end{gather}
Thus, for each $\si,\si'\in\O$, $\g(\cdot\tc\si,\si')$ is a probability measure over subsets $\L\subseteq V$. It will be convenient to view the distribution $\g(\cdot\tc\si,\si')$ as a spin system, i.e., a probability measure over spin configurations $\eta\in \{-1,+1\}^n$, by identifying $\eta_x = +1$ with $x\in \L$ and $\eta_x = -1$ with $x\notin \L$. Recall that in the non-interacting case ${\bf J}=0$, the distribution $\g(\cdot\tc\si,\si')$ does not depend on the pair $(\si,\si')$, and is simply the product of Bernoulli measures with parameter $1/2$. 
As described in Section~\ref{sec:fragment},
the dynamics is then entirely governed by 
the pure fragmentation process that starts with the set $V$ and recursively splits sets of vertices uniformly at random until it reaches a collection of singletons. 
The simple argument given in that proof then
allows one to obtain~\eqref{convergi} with $C=\tfrac12$ and $c=\log 2$; see~\cite{Sinetal2,CapPar} for a detailed analysis
of the non-interacting case.

When there is a nontrivial interaction ${\bf J}\neq 0$, this straightforward analysis breaks down. Our proof of Theorem~\ref{th:mainthla} is based on a coupling argument that allows us to reduce the problem to the analysis of a more general process in which the fragmentation mechanism is perturbed by a ``local growth" process arising from the correlations inherent in the interactions. The main idea is that if the local growth is sufficiently sparse, then the underlying fragmentation dominates and eventually the memory of the initial distribution (except for the marginals) is lost. 
 
The first step in the proof is to couple the above
random variable~$\eta$ with distribution $\g(\cdot\tc\si,\si')$ with a random subgraph $G$ of the complete graph $K_n$ having a suitable distribution $\nu$, i.e., we shall write 
\begin{align}
\label{nuga1}
\g(\cdot \tc\si,\si') = \sum_{G}\nu(G)\, \g_{G}(\cdot\tc \si,\si')
\,,
\end{align} 
where the sum extends over all possible subgraphs $G\subseteq K_n$, and $ \g_{G}(\cdot\tc \si,\si')$ is a probability measure on $\O$ for each realization $G$. The key features of this coupling are: 
\begin{itemize}
\item
the distribution $\nu$ does not depend on the pair $(\si,\si')$;
\item   
the distribution $ \g_{G}(\cdot\tc \si,\si')$ depends on the pair $(\si,\si')$ only through the spins \[\si_{V_G}=\{\si_x,\;x\in V_G\}\,,\qquad \si'_{V_G}=\{\si'_x,\;x\in V_G\},\]
where $V_G$ denotes the vertex set of $G$; and
\item  under $\g_{G}(\cdot\tc \si,\si')$, the random variables $\{\eta_y,\; y\in V\setminus V_G\}$ are i.i.d.\ Bernoulli with parameter $1/2$. 
\end{itemize} 
Actually, it will be crucial that $\nu$ can be taken to be the inhomogeneous Erd\H{o}s-R\'enyi random graph with edge weights proportional to \[\l_{xy}:=e^{4|J_{xy}|}-1.\] This ensures that, under the assumption~\eqref{Dcondition}, the graph~$G$ will be sufficiently sparse and the size of the connected components will satisfy good tail bounds.  Note that the expression \eqref{nuga1} can be seen as a form of 
{\em high-temperature expansion}~\cite{VelenikFriedli2017} for the measure $\g(\cdot \tc\si,\si')$. However, a standard high-temperature expansion would produce an expression of the form~\eqref{nuga1} with real-valued coefficients $\nu(G)$ which depend on $(\si,\si')$, while it is crucial for our coupling argument that $\nu$ be a {\em probability measure independent of} $(\si,\si')$.

Armed with the coupling \eqref{nuga1}, we consider all $2^t-1$ interactions in the derivation tree of Section~\ref{sec:fragment}
that produce the final distribution $T_t(p)$. For each interaction we use a realization of the graph $G$ and we specify a realization $B$ of the Bernoulli random variables with parameter $1/2$ which determine $\eta_y$ for $y\in V\setminus V_ G$. We then compute the resulting distribution. Letting $(\vec G,\vec B)=(G_1,B_1),\dots,(G_{2^t-1},B_{2^t-1})$ denote the vector of all such realizations,
we may then write
  \begin{align}
\label{nugas1}
T_t(p) = \sum_{(\vec G,\vec B)} \widehat\nu(\vec G,\vec B) \,T_t (p \tc \vec G,\vec B)
\,,
\end{align} 
where $\widehat\nu$ is a suitable distribution over the realizations $(\vec G,\vec B)$ and $T_t (p \tc \vec G,\vec B)\in\cP(\O)$ represents the distribution at time $t$ conditional on the realizations $(\vec G,\vec B)$. The important point here is that $\widehat\nu $ is independent of the initial conditions, and therefore all correlations in the initial distribution appear only in the measures $T_t (p \tc \vec G,\vec B)$. Moreover, the measure $\widehat\nu $ can naturally be interpreted as a stochastic process that combines fragmentation with local growth. 

The second main ingredient in the proof of Theorem~\ref{th:mainthla} is the identification of an event $\cE_t$ for this process, roughly representing the fact that within time~$t$ all fragments have reached their minimum 
size, and such that 
 \begin{align}
\label{nat11}
 \widehat\nu (\cE_t)\geq 1- A\,n^2\,e^{-b \,t}
\end{align}
for some absolute constants $A,b>0$. The nature of the event $\cE_t$ will be such that 
 \begin{align}
\label{nat01}
T_t (p \tc \vec G,\vec B) = T_t (p' \tc \vec G,\vec B) 
\,,\qquad (\vec G,\vec B)\in\cE_t
\end{align} 
for all $p,p'\in\cP(\O)$ which have the same marginals at every vertex $x\in V$.          Once these facts are established, \eqref{nugas1}, \eqref{nat11} and
\eqref{nat01} imply that for any such $p,p'\in\cP(\O)$ one has
 \begin{gather}\label{converge1}
\|T_t(p)-T_t(p')\|_{\rm TV}\leq  \widehat\nu(\cE_t^c) \max_{(\vec G,\vec B)}\|T_t(p \tc \vec G,\vec B)-T_t(p' \tc \vec G,\vec B)\|_{\rm TV}\leq A\,n^2\,e^{-b \,t} .
\end{gather}
This implies the result of Theorem~\ref{th:mainthla} by taking $p'=\mu_{\bf J,h}$.

We now turn to detailed proofs of the various claims sketched above.

\subsection{Coupling with inhomogeneous Erd\H{o}s-R\'enyi random graphs} \label{sec:coupling}
We start by observing that for every fixed $\si,\si'\in\O$, there is an interaction matrix $\tilde {\bf J} = \tilde {\bf J}(\si,\si')$ such that 
the set of spins to be exchanged, $\g(\cdot\tc\si,\si')$ from \eqref{gammala}, is itself an Ising Gibbs measure~$\mu_{\tilde{\bf J}}$ as defined in~\eqref{eq:Gibbs}.  

\begin{lemma}\label{lem:gamuJ}
For any $\si,\si'\in\O$, we have $\g(\cdot\tc\si,\si')=\mu_{\tilde{\bf J}}$, i.e.,
\begin{gather}\label{gammalaa}
\g(\eta\tc\si,\si')  \propto\exp\left\{\frac12\sum_{x,y\in V} \tilde J_{xy}\eta_x\eta_y \right\}\,,\qquad \eta\in\O,
\end{gather}
where the  interaction matrix $\tilde {\bf J} = \tilde {\bf J}(\si,\si')$ is given by
\begin{gather}\label{jtilde}
\tilde J_{xy} := 2J_{xy}\si_x\si_y\ind_{x\in D(\si,\si')}\ind_{y\in D(\si,\si')}\,,\qquad D(\si,\si') :=\{z\in V: \si_z\neq \si'_z\}.
\end{gather}
\end{lemma}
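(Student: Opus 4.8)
The plan is to evaluate the unnormalized weight $w_\Lambda:=\mu_{\bf J}(\sigma_\Lambda\sigma'_{\Lambda^c})\,\mu_{\bf J}(\sigma'_\Lambda\sigma_{\Lambda^c})$ appearing in~\eqref{gammala} explicitly, and to read off its dependence on~$\Lambda$. First, since by~\eqref{eq:eqclass3} the kernel $\cQ_{\bf J}$ (hence $\gamma(\cdot\mid\sigma,\sigma')$) is insensitive to the external fields, we may take ${\bf h}=0$, so that $\mu=\mu_{\bf J}$ with $\mu_{\bf J}(\tau)\propto\exp\{\tfrac12\sum_{x,y}J_{xy}\tau_x\tau_y\}$. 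Next, encode the subset~$\Lambda$ by the spin configuration $\eta$ with $\eta_x=+1$ iff $x\in\Lambda$, write $D=D(\sigma,\sigma')$ for the disagreement set, and record the elementary identities
\begin{equation*}
(\sigma_\Lambda\sigma'_{\Lambda^c})_x=\sigma_x\,\eta_x^{\,\mathbf{1}_{x\in D}},\qquad
(\sigma'_\Lambda\sigma_{\Lambda^c})_x=\sigma'_x\,\eta_x^{\,\mathbf{1}_{x\in D}},\qquad x\in V,
\end{equation*}
which hold because flipping the membership of a site in~$\Lambda$ changes the resulting configuration only at sites of~$D$, where moreover $\sigma'_x=-\sigma_x$.

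Substituting these identities into the exponent of $\mu_{\bf J}$ and adding the two resulting quadratic forms $\tfrac12\sum_{x,y}J_{xy}(\cdot)_x(\cdot)_y$, I would split the double sum over $(x,y)$ according to whether each of $x,y$ lies in~$D$. Pairs with $x,y\notin D$ contribute a quantity independent of~$\eta$. For a ``mixed'' pair, say $x\in D$ and $y\notin D$, the first configuration gives $J_{xy}\sigma_x\sigma_y\eta_x$ while the second gives $J_{xy}\sigma'_x\sigma'_y\eta_x=-J_{xy}\sigma_x\sigma_y\eta_x$, so the mixed terms cancel in pairs. Pairs with $x,y\in D$ give $J_{xy}\sigma_x\sigma_y\eta_x\eta_y$ from each configuration (the two sign flips cancel), for a total of $2\sum_{x,y\in D}J_{xy}\sigma_x\sigma_y\eta_x\eta_y=\sum_{x,y}\tilde J_{xy}\eta_x\eta_y$, with $\tilde{\bf J}$ exactly as in~\eqref{jtilde}. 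Hence $w_\Lambda=c(\sigma,\sigma')\,\exp\{\tfrac12\sum_{x,y}\tilde J_{xy}\eta_x\eta_y\}$, where the constant $c(\sigma,\sigma')$ absorbs $Z_{\bf J}^{-2}$ and the $\eta$-independent part of the exponent, and crucially does not depend on~$\Lambda$. Dividing by $\sum_{A\subseteq V}w_A$ as in~\eqref{gammala}, the constant cancels and we obtain $\gamma(\eta\mid\sigma,\sigma')\propto\exp\{\tfrac12\sum_{x,y}\tilde J_{xy}\eta_x\eta_y\}$, i.e.\ $\gamma(\cdot\mid\sigma,\sigma')=\mu_{\tilde{\bf J}}$, as claimed.

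The computation is entirely elementary and I do not anticipate any serious obstacle; the only point requiring care is the sign bookkeeping for the mixed pairs. This cancellation is the conceptual content of the lemma: an interaction $J_{xy}$ incident to a site where $\sigma$ and $\sigma'$ already agree is ``frozen'', contributing only to the $\Lambda$-independent normalization, which is exactly why the effective interaction $\tilde{\bf J}$ is supported on $D\times D$ and carries the gauge factor $\sigma_x\sigma_y$. In writing this up I would avoid enumerating all four cases twice by using the compact substitution rule displayed above.
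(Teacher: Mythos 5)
Your proposal is correct and follows essentially the same route as the paper's proof: both substitute a closed-form expression for $[\si_\L\si'_{\L^c}]_x$ in terms of $\eta_x$ (the paper writes $\tfrac12\si_x(1+\eta_x)+\tfrac12\si'_x(1-\eta_x)$, you write $\si_x\eta_x^{\ind_{x\in D}}$, which is the same identity) and then observe that in the product of the two Gibbs weights the field terms, the agreement pairs and the mixed pairs are $\L$-independent, while the disagreement pairs produce exactly the Ising weight with interaction $\tilde{\bf J}$. The only thing to fix is a factor-of-two bookkeeping slip in your intermediate sentence: keeping the $\tfrac12$ prefactor of the ordered double sums, the $\eta$-dependent part of the combined exponent is $\sum_{x,y\in D}J_{xy}\si_x\si_y\eta_x\eta_y=\tfrac12\sum_{x,y}\tilde J_{xy}\eta_x\eta_y$ rather than $\sum_{x,y}\tilde J_{xy}\eta_x\eta_y$, which is precisely the (correct) exponent you write in your final expression for $w_\Lambda$.
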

\begin{proof}
Define 
\[
\xi_x :=\si_x\eta_x
,\qquad \eta_x := \ind_{x\in\L}-\ind_{x\notin\L}.
\]
Note that \[
[\si_\L\si'_{\L^c}]_x=\tfrac12\,\si_x(\eta_x+1) + \tfrac12\,\si'_x(1-\eta_x)\,,
\]
and therefore
\begin{align*}
[\si_\L\si'_{\L^c}]_x[\si_\L\si'_{\L^c}]_y +[\si'_\L\si_{\L^c}]_x[\si'_\L\si_{\L^c}]_y  &=\tfrac12\,\eta_x\eta_y(\si_x-\si'_x)(\si_y-\si'_y)\\&=2\xi_x\xi_y\ind_{x\in D(\si,\si')}\ind_{y\in D(\si,\si')}\,.
\end{align*}
It follows that 
\begin{align*}
&\mu(\si_\L\si'_{\L^c})\mu(\si'_\L\si_{\L^c})=C(\si,\si')\exp\Biggl\{\sum_{x,y\in D(\si,\si')} J_{xy}\xi_x\xi_y\Biggr\}\,,
\end{align*}
where the constant $C(\si,\si')$ does not depend on $\L$. By definition \eqref{gammala}  this concludes the proof. 
\end{proof}

It follows from Lemma \ref{lem:gamuJ} that for any fixed pair $\si,\si'\in\O$, if $\eta\in\O$ is distributed according to $\g(\cdot\tc\si,\si')$, then $\{\eta_x, x\in V\setminus D(\si,\si')\}$ is the Bernoulli measure with parameter $1/2$ and, independently, $\{\si_x\eta_x, x\in D(\si,\si')\}$ is the Ising measure  on $D(\si,\si')$ with zero external fields and interaction $2{\bf J}$.  For our purposes, the problem with this representation of $\g(\cdot\tc\si,\si')$
is that it is structurally highly dependent on the 
configurations $\si,\si'$ through the set $D(\si,\si')$.  
Our goal in this subsection is to formulate an alternative
representation, in Lemma~\ref{lem:decca} below,
that overcomes this problem.

Let $\cG$ be the set of all subgraphs of the complete graph $K_n$ over $V\subseteq [n]$ with isolated vertices removed, and write $\cP(\cG)$ for the set of  probability measures over $\cG$. Thus $G\in\cG$ can be viewed as a collection of unordered pairs $\{x,y\}$ for $x,y\in V$. Note that $G\in\cG$ 
need not be connected and can be the empty graph (with no vertices).  
We write $V_G,E_G$ for the vertex and edge set of $G\in\cG$, respectively.    

\begin{lemma}\label{lem:decca}
Let $\nu_{\bf J}$ be the inhomogeneous Erd\H{o}s-R\'enyi random graph measure associated with the weights $\l_{xy}=e^{4|J_{xy}|}-1$, i.e.,
\begin{gather}\label{aformapp}
\nu_{\bf J}(G) \propto \prod_{\{x,y\}\in E_G} (e^{4|J_{xy}|}-1)\,.
\end{gather}
Then  
\begin{align}
\label{nug1}
\g(\cdot \tc\si,\si') = \sum_{G\in\cG}\nu_{\bf J}(G)\, \mu_{G}(\cdot\tc \si_{V_G},\si'_{V_G})\otimes {\rm Be}_{V \setminus V_G}(\tfrac12)
\,,
\end{align}
where, for any $G\in\cG$, $\mu_{G}(\cdot \tc \si_{V_G},\si'_{V_G})$ is a probability measure on $\{-1,+1\}^{V_G}$ that depends on $\si,\si'$ only through the spins $\si_{V_G},\si'_{V_G}$ and ${\rm Be}_{V \setminus V_G}(\tfrac12)$ is the Bernoulli probability measure on $\{-1,+1\}^{V\setminus V_G}$, which assigns independently the values $\pm1$ with probability $1/2$ to each $x\in V\setminus V_G$. Moreover, the probability measure $\mu_{G}(\cdot\tc \si_{V_G},\si'_{V_G})$ has the product structure 
\begin{align}
\label{prodstr}
\mu_{G}(\cdot\tc \si_{V_G},\si'_{V_G})= \otimes_{i=1}^k\, \mu_{G_i}(\cdot\tc \si_{V_{G_i}},\si'_{V_{G_i}})
\,,
\end{align}
where $G_1,\dots,G_k$ are the connected components of $G$. 
\end{lemma}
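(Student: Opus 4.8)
The plan is to first invoke Lemma~\ref{lem:gamuJ}, which identifies $\g(\cdot\tc\si,\si')$ with the Ising Gibbs measure $\mu_{\tilde{\bf J}}$, so that $\g(\eta\tc\si,\si')\propto\prod_{\{x,y\}}e^{\tilde J_{xy}\eta_x\eta_y}$, where by~\eqref{jtilde} the couplings $\tilde J_{xy}$ are supported on pairs inside $D(\si,\si')$ and satisfy $|\tilde J_{xy}|=2|J_{xy}|$. From here I would perform a bond (subgraph) expansion of this product, in the spirit of the high-temperature and random-cluster expansions, but normalized so that the induced law on the set $G$ of ``open'' bonds is a genuine probability measure---namely the inhomogeneous Erd\H{o}s--R\'enyi measure $\nu_{\bf J}$ of~\eqref{aformapp}---and, crucially, does not depend on the pair $(\si,\si')$.

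Concretely, for each edge $\{x,y\}$ I would split the edge weight as
\[
e^{\tilde J_{xy}\eta_x\eta_y}=a_{xy}\bigl(1+\l_{xy}\,\phi_{xy}(\eta_x,\eta_y)\bigr),\qquad \l_{xy}=e^{4|J_{xy}|}-1,
\]
with $a_{xy}>0$ a suitable constant depending only on ${\bf J}$ and on whether $\{x,y\}\subseteq D(\si,\si')$, and $\phi_{xy}\ge 0$ a nonnegative $2\times 2$ array. The point that this is the correct normalization is threefold: (i)~the edge weight $e^{\tilde J_{xy}\eta_x\eta_y}$ takes values in $[e^{-2|J_{xy}|},e^{2|J_{xy}|}]$, whose ratio is exactly $e^{4|J_{xy}|}=1+\l_{xy}$, so one can indeed choose $a_{xy}$ with $\phi_{xy}\ge 0$; (ii)~$\phi_{xy}$ depends on $(\si,\si')$ only through $\si_x,\si_y,\si'_x,\si'_y$ (via whether $x,y\in D(\si,\si')$ and, when they are, via $\operatorname{sign}(J_{xy})\si_x\si_y$); and (iii)~the prefactor $\prod_{\{x,y\}}a_{xy}$ is independent of $\eta$ and hence disappears upon normalizing $\g(\cdot\tc\si,\si')$, so that after normalization the weight of a bond configuration $G$ depends on $G$ only through $\prod_{e\in E_G}\l_e$, i.e.\ it is proportional to $\nu_{\bf J}(G)$. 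Expanding $\prod_{\{x,y\}}(1+\l_{xy}\phi_{xy})$ and collecting terms according to the set $G$ of open bonds, the weight of $G$ factorizes over the connected components $G_1,\dots,G_k$ of $G$, which yields the product structure~\eqref{prodstr}; the vertices of $V\setminus V_G$ are untouched by the bonds and therefore carry an i.i.d.\ ${\rm Be}(\tfrac12)$ factor, which yields the ${\rm Be}_{V\setminus V_G}(\tfrac12)$ term; and locality of $\phi_{xy}$ forces each component law $\mu_{G_i}$ to depend on $(\si,\si')$ only through the spins on $V_{G_i}$.

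The delicate point---and the step I expect to be the main obstacle---is to turn this expansion into an honest mixture in which the bond marginal is exactly $\nu_{\bf J}$ \emph{and} each $\mu_{G_i}$ is a bona fide probability measure. The naive choice $\mu_{G_i}\propto\prod_{e\in E_{G_i}}\phi_e$ succeeds only when the per-component partition functions $\sum_{\eta_{V_{G_i}}}\prod_{e\in E_{G_i}}\phi_e$ are all equal to $2^{|V_{G_i}|}$; one can arrange this for tree-like $G_i$ by choosing $a_{xy}$ so that every row and column sum of the array $\phi_{xy}$ equals $2$, but it fails as soon as $G_i$ contains a cycle, since then an extra contribution appears that is governed by the product of the bond parameters around the cycle. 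The fix is to construct the component laws more carefully---for instance recursively, adding the edges of $K_n$ one at a time and maintaining as invariants that the current decomposition is a $\nu_{\bf J}$-mixture and that every component law has full support (so that multiplying by the next bond density never collapses a component measure), while absorbing the cycle-dependent discrepancies into the $\mu_{G_i}$ and keeping them local. Verifying that this can be done consistently, so that the bond marginal remains the fixed Erd\H{o}s--R\'enyi measure $\nu_{\bf J}$ throughout, is where essentially all of the work lies, and it is precisely here that the special algebraic form $\l_{xy}=e^{4|J_{xy}|}-1$ is used.
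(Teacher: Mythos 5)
You start the same way the paper does---via Lemma~\ref{lem:gamuJ} and a bond expansion with nonnegative edge weights---and you correctly locate the obstruction: after expanding, the weight of a bond configuration $G$ involves per-component partition functions that depend on $(\si,\si')$ (and, on components with cycles, cannot be normalized away edge by edge), so the induced law on graphs is \emph{not} the $(\si,\si')$-independent measure $\nu_{\bf J}$ of \eqref{aformapp}. But your proposed fix---building the component laws recursively, edge by edge, while ``absorbing the cycle-dependent discrepancies into the $\mu_{G_i}$'' so that the bond marginal stays exactly $\nu_{\bf J}$---is exactly the part you leave unproved, and it is the entire content of the lemma. You give no mechanism guaranteeing that the discrepancies can be absorbed while keeping all mixture weights nonnegative, all component laws genuine probability measures, and the dependence on $(\si,\si')$ local; indeed there is no reason to expect the bond-expansion weights themselves to ever coincide with $\nu_{\bf J}$.

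The missing idea is that one should not force equality but \emph{stochastic domination}. The high-temperature expansion first yields a decomposition of $\g(\cdot\tc\si,\si')$ whose graph law $p_{{\bf J},\si,\si'}$ genuinely depends on $(\si,\si')$; one then shows that $p_{{\bf J},\si,\si'}$ is stochastically dominated (for inclusion) by $\nu_{\bf J}$, because each expansion weight satisfies $\d_e(\eta(e))\le e^{4|J_{xy}|}-1$, the heat-bath probability of adding an edge is monotone in this weight, and at the extreme value it equals exactly the Erd\H{o}s--R\'enyi edge probability $(e^{4|J_{xy}|}-1)/e^{4|J_{xy}|}$; this is \eqref{mono}--\eqref{aafformapp}, realized via coupled Glauber-type chains on subgraphs. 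Sampling $H\sim\nu_{\bf J}$ and then $G\subseteq H$ from the monotone coupling $\pi(\cdot\tc H)$, one defines $\mu_H(\cdot\tc\si_{V_H},\si'_{V_H})$ as the resulting mixture of $\hat\mu_G\otimes{\rm Be}_{V_H\setminus V_G}(\tfrac12)$ over $G\subseteq H$; thus the measures $\mu_G$ appearing in \eqref{nug1} are themselves mixtures, not the naive per-component Gibbs weights, and this is precisely how the cycle/normalization problem you identified disappears. One must still verify, as the paper does, that $\pi(\cdot\tc H)$ depends on $(\si,\si')$ only through $\si_{V_H},\si'_{V_H}$ and factorizes over the connected components of $H$, which is what delivers the locality and the product structure \eqref{prodstr}. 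Without this domination/coupling step---or a concrete substitute for it---your argument has a genuine gap.
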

\begin{proof}
We start with a high-temperature expansion, which is valid for any probability measure on $\O=\{-1,+1\}^V$ of the form
\begin{gather}\label{forma}
\g_\phi(\eta)=  \frac1Z\exp\Bigl\{\sum_{e} \phi(e)\eta(e) \Bigr\}\,,
\end{gather}
where $e$ denotes an arbitrary undirected edge $e=\{x,y\}$ for $x,y\in V$, $\eta(e)=\eta_x\eta_y$, and $\phi(e)\in\bbR$ are some given interaction coefficients. Note that  \eqref{gammalaa} is of this form. By adding a constant independent of $\eta$ to the exponent, we can rewrite this as
\begin{gather}\label{forma2}
\g_\phi(\eta)=  \frac1{\tilde Z}\prod_e (1+ \d_e(\eta(e))\,,
\end{gather}
where 
\[
\d_e(\eta(e))=\exp\left\{\phi(e)\eta(e) + |\phi(e)| \right\} -1.
\]
The point of adding $ |\phi(e)|$ in the exponent is to ensure that $\d_e(\eta(e))\geq 0$.
Moreover, expanding the product yields
\begin{gather}\label{forma3}
\tilde Z=\sum_{\eta} \prod_e (1+ \d_e(\eta(e)) = \sum_{\eta} \sum_{G\in\cG}\prod_{e\in E_G} \d_e(\eta(e))\,.
\end{gather}
Letting $G_1,\dots,G_k$ denote the (maximal) connected components of $G$, we see that 
\begin{gather}\label{forma4}
\tilde Z=\sum_{G\in\cG}w(G)\,,\qquad w(G) :=2^{|V\setminus V_G|}\prod_{i=1}^k w_c(G_i)\,,
\end{gather}
where, for any connected graph $G\in\cG$, we define 
\begin{gather}\label{wconn}
w_c(G)=\sum_{\eta_{V_G}}\prod_{e\in E_G} \d_e(\eta(e))\,.
\end{gather}
Since $\d_e(\eta(e))\geq 0$, the weights $w(G)$ are all nonnegative. 
Therefore, the measure \eqref{forma} satisfies 
\begin{gather}\label{formap}
\g_\phi(\eta)=  \sum_{G\in\cG} p_\phi(G)\, \hat\mu_G \otimes {\rm Be}_{V \setminus V_G}(\tfrac12),
\end{gather} 
where 
\begin{gather}\label{formap1}
p_\phi(G) :=  \frac{w(G)}{
\sum_{G'\in\cG}w(G')}
\end{gather} 
is a probability measure $p_\phi\in\cP(\cG)$ depending on the interactions  $\phi:=\{\phi(e)\}$, and $\hat\mu_G$ is the probability measure on $\{-1,+1\}^{V_G}$ given by 
the product $\hat\mu_G=\otimes_{i=1}^k v_{G_i}$, where, for any connected graph $G\in\cG$, we define the probability measure 
\begin{gather}\label{wconn2}
v_G(\eta_{V_G}) :=\frac{\prod_{e\in E_G} \d_e(\eta(e))}{w_c(G)}\,
\end{gather}
on $\{-1,+1\}^{V_G}$. 
We remark that, for any $G\in\cG$, $\hat\mu_G$ depends on the interactions 
$\phi_G:=\{\phi(e),\,e\in E_G\}$ only.  We now apply this decomposition to the measure \eqref{gammala}, which by Lemma~\ref{lem:gamuJ} is \eqref{forma} with the choice $\phi(e) = \tilde J_{xy}$, $e=\{x,y\}$, to obtain
\begin{gather}\label{aformappap}
\g(\cdot\tc\si,\si')=  \sum_{G\in\cG} p_{{\bf J},\si,\si'}(G)\, \hat\mu_G(\cdot\tc\si_{V_G},\si'_{V_G}) \otimes {\rm Be}_{V \setminus V_G}(\tfrac12),
\end{gather} 
for a distribution $p_{{\bf J},\si,\si'}\in\cP(\cG)$ that depends on the interactions ${\bf J}=\{J_{xy}\}$ and on the configurations $\si,\si'$ through the set $D(\si,\si')$; see \eqref{jtilde}, \eqref{wconn} and \eqref{formap1}. We note that, because of the dependance on $\si,\si'$, this is not sufficient to prove the desired claim \eqref{nug1}. We shall use a further coupling argument
to lift the decomposition \eqref{aformappap} to the decomposition \eqref{nug1} with the desired properties. 

Observe that the Erd\H{o}s-R\'enyi measure $\nu_{\bf J}$ defined by~\eqref{aformapp}
can be rewritten as in~\eqref{formap1} if we redefine the weights
$w(G)$ as 
\begin{gather}\label{barw}
\bar w(G)= \prod_{e\in E_G} \bar \d_e\,,\qquad \bar \d_e:=(e^{4|J_{xy}|}-1)\,,\;\;e=\{x,y\}.
\end{gather} 
We are going to show that for all $(\si,\si')$, the measure $p_{{\bf J},\si,\si'}$ is stochastically dominated by  $\nu_{\bf J}$, or equivalently that there exists a coupling $\pi\in\cP(\cG\times\cG)$ of $\nu_{\bf J}$ and $p_{{\bf J},\si,\si'}$ such that for all $G,H\in\cG$,
\begin{equation}
\begin{gathered}\label{abformapp}
\sum_{G'\in\cG}\pi(G',H) = \nu_{\bf J}(H)\,,\quad   \sum_{H'\in\cG}\pi(G,H') = p_{{\bf J},\si,\si'}(G)\,, \qquad\hbox{\rm and}\\
\sum_{G,H\in\cG:\,G\subseteq H}\pi(G,H) = 1. 
\end{gathered}
\end{equation}
In particular, it follows that the conditional distribution $\pi(\cdot\tc H)$ of $G\in\cG$ is supported on graphs $G\subseteq H$. Moreover, we will also show that for any $H\in\cG$, 
\begin{itemize}
\item
$\pi(\cdot\tc H)$ depends on the spin configurations $\si,\si'$ only through their values $\si_{V_H},\si'_{V_H}$ on $V_H$,
\item 
$\pi(\cdot\tc H)$ has the product structure $\pi(\cdot\tc H)=\otimes_{i=1}^k \pi(\cdot\tc H_i)$ where $H_1,\dots,H_k$ denote the connected components of $H$.  
\end{itemize}
Once these facts are established we can quickly conclude the proof of Lemma~\ref{lem:decca} as follows. Let $\pi$ be the coupling of $p_{{\bf J},\si,\si'}$ and $\nu_{\bf J}$ as above, so that 
\begin{gather}\label{abformappa}
 p_{{\bf J},\si,\si'}(G) = \sum_{H\in\cG:\, G\subseteq H}\nu_{\bf J}(H) \pi(G\tc H)\,.
\end{gather} 
From \eqref{aformappap} we obtain 
\begin{align}\label{acformapp}
\g(\cdot\tc\si,\si')&=  \sum_{H\in\cG}\nu_{\bf J}(H)\sum_{G\in\cG: \,G\subseteq H} \pi(G\tc H)\, \hat\mu_G(\cdot\tc\si_{V_G},\si'_{V_G}) \otimes {\rm Be}_{V \setminus V_G}(\tfrac12)
\nonumber\\
& = \sum_{H\in\cG}\nu_{\bf J}(H)\, \mu_H(\cdot\tc\si_{V_H},\si'_{V_H}) \otimes {\rm Be}_{V \setminus V_H}(\tfrac12)\,,
\end{align} 
 where we define 
 \begin{align}\label{adformapp}
\mu_H(\cdot\tc\si_{V_H},\si'_{V_H}) :=\sum_{G\in\cG: \,G\subseteq H} \pi(G\tc H)\, \hat\mu_G(\cdot\tc\si_{V_G},\si'_{V_G}) \otimes {\rm Be}_{V_H \setminus V_G}(\tfrac12).
\end{align} 
Since  $\pi(\cdot\tc H)$ depends only on $\si_{V_H},\si'_{V_H}$, the measure in \eqref{adformapp} also depends on $\si,\si'$ only through $\si_H,\si'_H$. Moreover, because of the product structure of $\hat\mu_G(\cdot\tc\si_{V_G},\si'_{V_G}) $ and $\pi(\cdot\tc H)$, the measure $\mu_H(\cdot\tc\si_{V_H},\si'_{V_H})$ defined by \eqref{adformapp} must necessarily also have the desired product structure \begin{align}
\label{prodstr2}
\mu_{H}(\cdot\tc \si_{V_H},\si'_{V_H})= \otimes_{i=1}^k \mu_{H_i}(\cdot\tc \si_{V_{H_i}},\si'_{V_{H_i}})
\end{align}
along the connected components $H_1,\dots,H_k$ of $H$. Therefore, 
the 
decomposition \eqref{acformapp} concludes the proof of the lemma once we establish the desired properties of the coupling~$\pi$. 

To construct the coupling $\pi$ we use the following coupled Glauber-type Markov chains. Let $U_1,U_2,\dots$ denote i.i.d.\ uniform random variables in $[0,1]$ and let $e_1,e_2,\dots$ denote i.i.d.\ uniformly random edges $e=\{x,y\}$ taken  among all $\binom{n}2$ possible choices. For any $G\subseteq H\in\cG$, any fixed $e=\{x,y\}$, let $G^{e,+}$ denote the graph $G\cup\{e\}$, and
let $G^{e,-}$ denote the graph $G\setminus\{e\}$. We define
\begin{align}\label{afformapp}
p(G,e,\pm) = \frac{w(G^{e,\pm})}{w(G^{e,-})+w(G^{e,+})},\qquad \bar p(G,e,\pm) = \frac{\bar w(G^{e,\pm})}{\bar w(G^{e,-})+\bar w(G^{e,+})}
\end{align} 
For any fixed $G_0, H_0\in\cG$, we write $(X_t,Y_t)$, $t=0,1,\dots$, for the Markov chain with $X_0=G_0, Y_0=H_0$, and such that for any $t\geq 1$, 
\begin{enumerate}
\item if $U_t\leq p(X_{t-1},e_t,+)$ then $X_t=X_{t-1}\cup \{e\}$, otherwise $X_t=X_{t-1}\setminus \{e\}$;
\item if $U_t\leq \bar p(Y_{t-1},e_t,+)$ then $Y_t=Y_{t-1}\cup \{e\}$, otherwise $Y_t=Y_{t-1}\setminus \{e\}$.
\end{enumerate}
In words, at each step a uniformly random edge $e_t$ is chosen, and the graphs $X_{t-1},Y_{t-1}$ are updated by adding or removing the edge $e_t$ according to the prescribed probabilities and the common source of randomness~$U_t$. 
This defines the Markov chain with state space $\cG\times\cG$. 
By construction, we have the reversibility conditions
\begin{align*}
&p_{{\bf J},\si,\si'}(G\setminus\{e\}) \,p(G,e,+) = p_{{\bf J},\si,\si'}(G\cup\{e\})\, p(G,e,-)\,;\\
 &\nu_{\bf J}(H\setminus\{e\}) \,\bar p(H,e,+) = \nu_{\bf J}(H\cup\{e\})\,\bar p(H,e,-), 
\end{align*} 
for any $G,H\in\cG$. Therefore, the marginals $X_t$ and $Y_t$,  $t=0,1,\dots$,  are  Markov chains with state space $\cG$ with stationary distributions $p_{{\bf J},\si,\si'}$ and $\nu_{\bf J}$ respectively.  

Let us now observe that the above coupled process preserves ordering, in the sense that if $G_0\subseteq H_0\in\cG$ then $X_t\subseteq Y_t$ for 
all $t=1,2,\dots$.  In view of our construction, to prove this it suffices to note that, for any $G\subseteq H$ and any~$e$,
   \begin{align}\label{mono}
p(G,e,+) \leq \bar p(H,e,+)\,.
\end{align} 
For $e=\{x,y\}$, one has 
 \begin{align}\label{mono2}
p(G,e,+) = \frac{\sum_{\eta_x,\eta_y}\d_e(\eta(e))\sum_{\{\eta_z,z\neq x,y\}}\prod_{e'\in G:e'\neq e}\d_{e'}(\eta({e'}))}{\sum_{\eta_x,\eta_y}[1+\d_{e}(\eta(e))]\sum_{\{\eta_z,z\neq x,y\}}\prod_{e'\in G:e'\neq e}\d_{e'}(\eta(e'))}.
\end{align}
This is an increasing function of the nonnegative variable $\d_e(\eta(e))$ and thus, using 
\[
\d_e(\eta(e))=\exp\left\{\phi(e)\eta(e) + |\phi(e)| \right\}-1
\leq \bar \d_e = e^{4|J_{xy}|}-1,
\] 
where $e=\{x,y\}$, $\eta(e)=\eta_x\eta_y$, and $\phi(e)=\tilde J_{xy}$, one has $p(G,e,+) \leq \hat p(G,e,+) $ where the latter is defined as in \eqref{mono2} with $\d_e(\eta(e))$ replaced by $\bar \d_e$. On the other hand it is not hard to see that   
\begin{align}\label{aafformapp}
\hat p(G,e,+) = \frac{e^{4|J_{xy}|}-1}{e^{4|J_{xy}|}} = 
\bar p(H,e,+)\,,
\end{align} 
for any $G,H\in\cG$. This proves 
the desired monotonicity~\eqref{mono}.

The monotone coupling $\pi$ of $p_{{\bf J},\si,\si'}$ and $\nu_{\bf J}$ may then be defined as the stationary distribution of the Markov chain $(X_t,Y_t)$. The required property \eqref{abformapp} is a consequence of the above construction and the inequality \eqref{mono}, since if we start the Markov chain at $(X_0,Y_0)$ with $X_0\subseteq Y_0$ we will have $X_t\subseteq Y_t$ at all times and thus the limiting distribution $\pi$ as $t\to\infty$ must be supported  on ordered pairs.   

Finally, we need to check that
 for any $H\in\cG$, the conditional distribution $\pi(\cdot\tc H)$ of $G\subseteq H$ depends on the spin configurations $\si,\si'$ only through their values $\si_{V_H},\si'_{V_H}$ on $V_H$.   
To this end, observe that $\pi(\cdot\tc H)$ is the stationary distribution of the Markov chain $(X_t,Y_t)$ obtained by conditioning on $Y_t=H$ for all $t$, or equivalently the Markov chain defined as above with the restriction that any transition that would result in adding or removing an edge from the graph $Y_t=H$ is rejected. This yields the Markov chain $X^H_t$, $t=0,1,\dots$ with state space $\cG_H=\{G\in\cG:\, G\subseteq H\}$ and transition probabilities given by \eqref{mono2} with the restriction that adding an edge $e$ is only allowed if $G\cup\{e\}\subseteq H$.  Since 
the expression  \eqref{mono2} factorizes over connected components of $G$, and since $G\subseteq H$ it follows that  \eqref{mono2} only depends on the coefficients $\phi(e)=\tilde J_{xy}$, where $e=\{x,y\}\in H$, and therefore the stationary distribution $\pi(\cdot\tc H)$ has the same property. This ends the proof of Lemma \ref{lem:decca}.
\end{proof}

So far we have not used the  
weak interaction assumption \eqref{Dcondition}, so Lemma \ref{lem:decca} holds for arbitrary coefficients $J_{xy}$. Next, we observe that the condition~\eqref{Dcondition}
implies a strong sparsity property of the measure $\nu_{\bf J}$  in Lemma \ref{lem:decca}.
From the definition~\eqref{aformapp}, this
measure is the inhomogeneous Erd\H{o}s-R\'enyi random graph where each edge $e=\{x,y\}$, $x,y\in V$, is included independently with probability
 \begin{align}\label{inhomER}
p_{xy} = 1-e^{-4|J_{xy}|}.
\end{align} 
In particular, using $(e^z-1)e^{-z}\leq z$ for $z\ge 0$, one has $p_{xy}\leq 4|J_{xy}|$, and if \eqref{Dcondition} holds, then for any $x\in V$, 
 \begin{align}\label{inhomER2}
\nu_{\bf J}(V_G\ni x) \leq \sum_{y:\,y\neq x} p_{xy}\leq 4\d_0.
\end{align} 
Moreover, if $\d_0$ is sufficiently small, then for any given $x\in V$ the size of the connected component of $G$ at $x$ has an exponential tail.
We make this precise in Lemma~\ref{lem:expmom} below.

\ignore{
\begin{remark}\label{rem:er}
From the definition \eqref{aformapp} one has that
the measure $\nu_{\bf J}$ from Lemma \ref{lem:decca} is the inhomogeneous Erd\H{o}s-R\'enyi random graph where each edge $e=\{x,y\}$, $x,y\in V$, is included independently with probability
 \begin{align}\label{inhomER}
p_{xy} = \frac{e^{4|J_{xy}|}-1}{e^{4|J_{xy}|}}.
\end{align} 
In particular, using $(e^z-1)e^{-z}\leq z$, $z\in[0,\infty)$, one has $p_{xy}\leq 4|J_{xy}|$, and that if \eqref{Dcondition} holds, then for any $x\in V$, 
 \begin{align}\label{inhomER2}
\nu_{\bf J}(V_G\ni x) \leq \sum_{y:\,y\neq x} p_{xy}\leq 4\d_0.
\end{align} 
Moreover, if $\d_0$ is sufficiently small in~\eqref{Dcondition}, then for any given $x\in V$ the size of the connected component of $G$ at $x$ has an exponential tail; see Lemma \ref{lem:tailux} for a detailed estimate.
\end{remark}
}

\subsection{Fragmentation with noise}
We now develop the main construction behind the convergence result in Theorem \ref{th:mainthla}. It is based on a  {\it perturbed\/} fragmentation process, i.e., a process that combines the random fragmentations of the non-interacting case (as described in Section~\ref{sec:fragment}) with some competing noise represented by the random graphs encoding dependencies. 

Given a set $A\subseteq [n]$ and a random graph $G\in\cG$, we define the random set $A'$ as the vertex set of the union of all connected components of $G$ that have non-empty intersection with~$A$. More formally, write $G=\cup_{i=1}^\ell G_i$ where $G_i$ are the connected components of $G$ and let 
\[
G(A)=\bigcup_{i:\;V_{G_i}\cap A\neq \emptyset}\,G_i.
\]
Then we set $A'=V_{G(A)}$. 
We may sample $A'$ starting from $A$ by a breadth-first search, i.e., by revealing sequentially for each $x\in A$ the neighborhood of $x$ in $G$, then recursively the neighborhood of each vertex revealed at the previous step, and so on until there are no more neighbors to reveal. Clearly, $A'$ may contain sites that are not in $A$. However, if $x\in A\cap V_G^c$ then $x\notin A'$. 
If $ A\setminus A'\neq\emptyset$,  for every $x\in A\setminus A'$, we independently declare $x$ to be {\em in} or {\em out} by a fair coin flip. We thus obtain two random sets $A_{\rm in}$ and $A_{\rm out}$, such that $$A_{\rm in}\cup A_{\rm out}=A\setminus A'\,,\qquad A_{\rm in}\cap A_{\rm out}=\emptyset.$$ 
Next, for any $A\subseteq [n]$, we define two sets  
$\Phi_0(A),\Phi_1(A)$ by 
  \begin{align}\label{phi01}
\Phi_0(A) = \Phi_1(A) = \emptyset \,,\qquad \text{if\;\;} |A|\leq 1\,,
\end{align}
and
  \begin{align}\label{phi011}
\Phi_0(A) = A'\cup A_{\rm in}\,,\quad \Phi_1(A) = A'\cup A_{\rm out}
\,,\qquad \text{if\;\;} |A|\geq 2\,.
\end{align}

\begin{definition}\label{def:fragnoise}
The {\em fragmentation plus noise process} $\cF_t$, $t=0,1,\dots$ is the random process defined as follows. For each $t\in\bbN$, $\cF_t$ consists of $2^t$ labeled fragments, i.e., (possibly empty) subsets $F^{(t)}_1,\dots,F^{(t)}_{2^t}$, $F^{(t)}_i\subseteq [n]$, obtained by repeated application of the following rule.  
At time zero we have $\cF_0=[n]$, i.e., $F^{(0)}_1=[n]$. At time $t\in\bbN$, 
if $\cF_{t-1}=(F^{(t-1)}_1,\dots,F^{(t-1)}_{2^{t-1}})$, then for each $i$ independently, we replace $F^{(t-1)}_i$ by $(\Phi_0(F^{(t-1)}_i),\Phi_1(F^{(t-1)}_i))$ where $\Phi_0,\Phi_1$ are the random maps defined by \eqref{phi01}-\eqref{phi011}, so that 
\begin{align}\label{phrag}
\cF_{t}&=(F^{(t)}_1,\dots,F^{(t)}_{2^t})\nonumber \\&=(\Phi_0(F^{(t-1)}_1),\Phi_1(F^{(t-1)}_1),\Phi_0(F^{(t-1)}_2),\Phi_1(F^{(t-1)}_2),\dots,\Phi_0(F^{(t-1)}_{2^{t-1}}),\Phi_1(F^{(t-1)}_{2^{t-1}}))\,. 
\end{align}
We say that the process\/ {\rm dies out} if there is a time $t$ such that all fragments are empty,  i.e., $F^{(t)}_i=\emptyset$ for all $i=1,\dots,2^t$. With slight abuse of notation, we write $\cF_t=\emptyset$ for the latter event. 
\end{definition}

In the non-interacting case ${\bf J}=0$ one has $p_{xy}=0$ for all $\{x,y\}$, and thus $A'=\emptyset$, $\Phi_0(A)=A_{\rm in}, \Phi_1(A)=A_{\rm out}$ and $A\mapsto (\Phi_0(A),\Phi_1(A))$ is one step of a pure fragmentation process, where the set $A$ is partitioned into two subsets using independent fair coin flips for each vertex. In this case $F_i^{(t)}\cap F_j^{(t)}=\emptyset $ for all $i,j=1,\dots,2^t$ and for all $t\in\bbN$. In particular, it is not hard to see that in this case 
\begin{align}\label{dkzero}
\bbP(\cF_t\neq\emptyset)\leq n(n-1)\,2^{-t}\,,\qquad t=1,2,\dots
\end{align}
Indeed, by construction $\cF_t\neq\emptyset$ implies that there exist two vertices $x,y\in[n]$ that belong to the same fragment up to time $t-1$. Thus \eqref{dkzero} follows as in the analysis of Section~\ref{sec:fragment}.
In the interacting case, there is a first stage where the set $A$ grows according to the local branching at  every $x\in A$, and the fragmentation occurs only on those vertices that have an empty neighborhood in $G$.  
Our main technical result below establishes that the fragmentation plus noise process also satisfies a bound of the form \eqref{dkzero}, with a slightly weaker exponential decay rate, provided  \eqref{Dcondition} holds for a suitably small $\d_0>0$.

\begin{lemma}\label{fraglemma}
For any $\d\in(0,1)$, there exists $\d_0>0$, and a constant $C_\d>0$ such that if  \eqref{Dcondition} holds with constant $\d_0$ then
\begin{align}\label{dkz}
\bbP(\cF_t\neq\emptyset)\leq n^2C_\d\,(2-\d)^{-t}\,,\qquad t=1,2,\dots
\end{align}
\end{lemma}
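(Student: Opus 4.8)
The plan is to dominate the fragmentation plus noise process $\cF_t$ by tracking, for a fixed pair of vertices $x,y \in [n]$, the event that $x$ and $y$ remain together in a common fragment. Since $\cF_t \neq \emptyset$ implies (by the same argument as in the non-interacting case) that some fragment has size $\geq 2$, hence contains a pair $x,y$, a union bound over the $\binom{n}{2}$ pairs reduces \eqref{dkz} to showing that for each fixed pair,
\begin{align}\label{eq:pairbound}
\bbP(\text{$x,y$ in a common fragment at time $t$}) \leq C_\d'\,(2-\d)^{-t}.
\end{align}
To analyze a single fragment's evolution, I would follow one ``tagged'' fragment $F_t$ containing the tagged pair (at each branching step, follow whichever of $\Phi_0,\Phi_1$ retains both $x$ and $y$; if they are separated, we stop, having won). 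The key point is that along this tagged line, the fragment size $|F_t|$ evolves as a Markov-type chain: at each step, $F_{t-1}$ first grows to $F_{t-1}' = V_{G(F_{t-1})}$ via the connected components of an independent copy of the Erd\H{o}s--R\'enyi graph $\nu_{\bf J}$, and then the ``free'' vertices $F_{t-1}\setminus F_{t-1}'$ are split by fair coin flips, of which the tagged fragment retains roughly half (in expectation exactly half, conditionally).

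The heart of the argument is a supermartingale / Lyapunov estimate. The natural quantity to control is $\E[\theta^{|F_t|}]$ for a suitable constant $\theta>1$, or more simply $\E[|F_t|]$ together with a tail bound. First I would establish, using \eqref{inhomER2} and the exponential tail on component sizes promised in Lemma~\ref{lem:expmom} (invoked as a black box), that the growth step is only a mild perturbation: if $|A|=m$, then $\E[|A'|] \leq m(1+\kappa)$ for some $\kappa = \kappa(\d_0) \to 0$ as $\d_0 \to 0$, and moreover $|A'| - m$ has exponential tails uniformly in the configuration (because each vertex of $A$ drags in only its own connected component, whose size is stochastically dominated by a subcritical branching process when $\d_0$ is small). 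Combining the growth factor $(1+\kappa)$ with the fragmentation factor $\tfrac12$ for the free vertices — and carefully accounting for the vertices in $A'$, which are \emph{not} fragmented and are fully retained — I would show that for a one-step passage,
\begin{align}\label{eq:onestep}
\E\bigl[\theta^{|F_t|} \,\big|\, F_{t-1}\bigr] \leq (2-\d)^{-1}\,\theta^{|F_{t-1}|}
\end{align}
holds for $|F_{t-1}| \geq 2$, for an appropriate choice of $\theta>1$ and $\d\in(0,1)$ once $\d_0$ is small enough. The reason this works: when $|A|=m\geq 2$, with overwhelming probability $|A'|$ is small (say $\leq$ a slowly growing function, or just $O(1)$ on the relevant event), so the bulk of $A$ gets fragmented, each vertex going to the tagged side with probability $\tfrac12$; a binomial concentration / generating-function computation then gives that $\E[\theta^{|F_t|}\mid F_{t-1}]$ picks up a factor close to $\bigl(\tfrac{1+\theta}{2}\bigr)^{m}$ from fragmentation, times a factor close to $1$ from growth, which for $\theta$ close to $1$ and $m\geq 2$ is at most $(2-\d)^{-1}\theta^m$. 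Iterating \eqref{eq:onestep} and using $\theta^{|F_0|} = \theta^n$ — wait, that is too lossy; instead I would iterate starting from the \emph{pair}: track $\theta^{|F_t|}$ only from the moment the tagged fragment is small, or better, directly bound $\bbP(x,y\text{ together at time }t)$ by noting that staying together requires the tagged fragment to survive each fragmentation step, contributing a factor $\leq \tfrac12 \cdot (1+\kappa)$ per step from the pair $\{x,y\}$ itself never being separated, and handle the ambient growth via the exponential-tail control so that it contributes only a bounded multiplicative constant $C_\d$ overall.

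The main obstacle, and where the real work lies, is \eqref{eq:onestep}: controlling the interaction between growth and fragmentation. The subtlety is that the set $A'$ (the union of ER-components meeting $A$) is never fragmented, so if $A'$ were large the process would not contract; one must show that under \eqref{Dcondition} with small $\d_0$, $|A'|$ stays bounded (with exponentially small probability of being large, uniformly over the current fragment, via Lemma~\ref{lem:expmom}) so that the fragmentation of $A \setminus A'$ dominates. A clean way to organize this is: condition on the ER-graph realization $G$; on the ``good'' event that every connected component of $G$ meeting $A$ has size $\leq L$ for a suitable constant $L$, bound $|A'| \leq L\cdot(\text{number of components met})$ and note the number of met components is at most $|A|$ but typically much smaller; on the ``bad'' event, use the exponential tail to absorb the loss. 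Then average over $G$. The remaining ingredients — the union bound over pairs and the reduction of $\{\cF_t \neq \emptyset\}$ to ``some pair stays together'' — are routine, exactly as in \eqref{dkzero}. I would also need to be slightly careful that the $2^t$ fragments, though not disjoint in the interacting case, still satisfy the pairwise reduction: $\cF_t \neq \emptyset$ means some $F_i^{(t)}$ is nonempty; if all nonempty fragments were singletons we would be fine to bound differently, but in fact the cleanest route is to observe that a nonempty fragment of size $1$ at time $t$ must have had size $\geq 2$ (hence contained a pair) at some earlier time $s<t$ unless it was a singleton from birth — and singletons die immediately by \eqref{phi01} — so every nonempty fragment at time $t\geq 1$ contains a pair $\{x,y\}$ that co-occupied a fragment at all times $0,1,\dots,t-1$, which is exactly the event bounded by \eqref{eq:pairbound}.
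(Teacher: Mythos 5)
Your reduction step contains a genuine gap. You claim that ``every nonempty fragment at time $t\geq 1$ contains a pair $\{x,y\}$ that co-occupied a fragment at all times $0,1,\dots,t-1$,'' and then you union-bound over the $\binom{n}{2}$ fixed pairs. This is false for the fragmentation plus noise process, precisely because of the growth step in \eqref{phi011}: the map $A\mapsto A'\cup A_{\rm in}$ can \emph{add} vertices that were not in $A$, so the identity of the pair witnessing $|F^{(s)}|\geq 2$ can change from step to step. For instance, the ancestor line can evolve as $\{1,2,3\}\to\{1,2\}\to\{2,3\}\to\{2,3\}$ (vertex $3$ is dragged in by a component $\{2,3\}$ of $G$ at the second step while vertex $1$ is fragmented away): the line keeps size $\geq 2$ throughout, hence $\cF_t\neq\emptyset$, yet no single pair co-occupies a fragment at all times. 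The event you must control is ``the ancestor fragment has size $\geq 2$ at every time $\leq t-1$,'' which includes such relay behavior, and your fixed-pair events simply do not cover it. Your fallback -- the Lyapunov/supermartingale bound $\E[\theta^{|F_t|}\mid F_{t-1}]\leq(2-\d)^{-1}\theta^{|F_{t-1}|}$ iterated from $|F_0|=n$ -- does address the right event but, as you yourself note, produces a factor $\theta^n$ rather than $n^2$, and you never resolve this; so as written neither branch of the argument closes the proof. (A smaller point: the component-size tail you invoke is Lemma~\ref{lem:tailux}, not Lemma~\ref{lem:expmom}, which concerns the total progeny of a branching process over time.)

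For comparison, the paper handles both difficulties at once by stochastically dominating the evolution of a single fragment line by the union of $n$ \emph{independent} labeled branching processes $X^y$, one started from each vertex $y$, with offspring law \eqref{offspringx} (a coin-flip death/survival when the $\nu_{\bf J}$-neighborhood is empty, and absorption of the whole connected component otherwise). Survival of the fragment with size $\geq 2$ up to time $t-1$ forces the existence of two processes $X^y,X^z$ and a time $s<t$ such that both are alive up to time $s$ and $X^y$ has at least two individuals in every generation between $s$ and $t$; this pins the total progeny at $N^y(t)\geq 2t-s-1$ and $N^z(s+1)\geq s+1$, so the exponential moment bound of Lemma~\ref{lem:expmom} plus Markov's inequality gives $C_a^2 2^{-2at}$ per choice of $(y,z,s)$, and the $n^2 t$ union bound (over starting vertices, not over pairs that stay together) yields the $(4-\d)^{-t}$ per-fragment estimate, which survives the union over the $2^{t-1}$ fragments to give \eqref{dkz}. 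To repair your proposal you would need either this kind of decomposition of the survival event, or a Lyapunov argument whose initial condition is reduced from $[n]$ to single vertices -- which is essentially what the branching-process domination accomplishes.
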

The proof of this lemma is quite technical and is postponed to Section \ref{sec:pflem38}. We turn first to the proof of Theorem~\ref{th:mainthla}.

\subsection{Proof of Theorem \ref{th:mainthla}}
We now have the tools to conclude the proof of Theorem \ref{th:mainthla}, our main result for nonlinear block dynamics.
Recall again the construction of $T_t(p)$ in terms of the binary derivation tree in Section~\ref{sec:fragment}.

By the invariance property \eqref{eq:inv}, the target measure $\mu_{\bf J,h}$ can be obtained at the root by taking the distribution $\mu_{\bf J,h}$ on each leaf. Thus, Theorem \ref{th:mainthla} says that when each leaf is given a distribution $p$ with the same marginals as $\mu_{\bf J,h}$, the two distributions $ T_t(p)$ and $\mu_{\bf J,h}$ can be coupled with an error at most $Cn^2e^{-c t}$ for any~$t$. We shall actually prove the following stronger result. Let $\vec p=(p_1,\dots,p_{2^t})$ and $\vec q=(q_1,\dots,q_{2^t})$ be arbitrary vectors of distributions in~$\cP(\O)$ whose marginals on $\si_x$ at all sites~$x$ agree, i.e., $p_i,q_i\in\cP(\O)$ satisfy 
\begin{gather}\label{samemarg}
(p_i)_x=(p_j)_x=(q_i)_x=(q_j)_x\,,\qquad\;i,j=1,\dots,2^t,\;x\in V. 
\end{gather}
Let $T_t(\vec p)$ (resp., $T_t(\vec q)$) denote the distribution at the root of the binary tree of depth $t$, 
where the leaf labeled $i=1,\dots,2^t$ is equipped with the distribution $p_i$ (resp., $q_i$); recall Figure~\ref{fig1} for the case $t=2$.
\begin{theorem}\label{th:mainarb}
There exist absolute constants $\d_0>0$, $c>0$ and $C>0$ such that, if
\eqref{Dcondition} holds with constant~$\d_0$ then for any choice of initial distributions $\vec p,\vec q$ as in \eqref{samemarg}, 
\begin{gather}\label{convergiarb}
\| T_t(\vec p)- T_t(\vec q)\|_{\rm TV}\leq Cn^2e^{-c\,t},\qquad t\in\bbN.
\end{gather}
\end{theorem}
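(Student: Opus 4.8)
The plan is to realize the ``fragmentation plus noise'' process as the bookkeeping device for the derivation tree under the coupling of Lemma~\ref{lem:decca}, and then to use Lemma~\ref{fraglemma} to control the probability that this process fails to die out. Concretely, to each of the $2^t-1$ internal interactions of the binary derivation tree we attach an independent realization $(G,B)$, where $G\sim\nu_{\bf J}$ is the inhomogeneous Erd\H{o}s--R\'enyi graph from~\eqref{aformapp} and $B$ is the collection of fair coin flips assigning $\eta_y\in\{-1,+1\}$ for $y\in V\setminus V_G$; by Lemma~\ref{lem:decca} this yields the decomposition~\eqref{nugas1}, $T_t(\vec p)=\sum_{(\vec G,\vec B)}\widehat\nu(\vec G,\vec B)\,T_t(\vec p\tc\vec G,\vec B)$, where $\widehat\nu$ is a product measure independent of the leaf distributions and the same for $\vec p$ and $\vec q$. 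The key structural point is that, conditionally on $(\vec G,\vec B)$, each spin $\t_x$ of the root configuration is a deterministic function traced down the tree: at each node one follows which child supplied the spin at $x$, and the ``noise'' graph $G$ forces the spins in a connected component of $G$ to be sampled \emph{jointly} from $\mu_G(\cdot\tc\si_{V_G},\si'_{V_G})$, which depends on the two incoming configurations only through their restriction to $V_G$.

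Next I would define the relevant subset process. Start at the root with the set $A=V$ of all sites we need to determine. Pushing one level down, the random graph $G$ at the root merges each site of $A$ with its whole $G$-component, producing $A'=V_{G(A)}$; the sites of $A\setminus A'$ are then split by the coin flips $B$ into those inherited from the left child and those from the right child; and---crucially---any component of size one that carries a site is simply resampled as an i.i.d.\ fair spin, so singletons can be dropped. Iterating, the sets routed to the $2^t$ leaves are exactly the fragments $F^{(t)}_i$ of Definition~\ref{def:fragnoise}, coupled to $(\vec G,\vec B)$. Now take $\cE_t:=\{\cF_t=\emptyset\}$, the event that the fragmentation-plus-noise process dies out by time~$t$. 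On $\cE_t$, every site of $V$ has, somewhere along the tree, been resampled as an independent fair coin flip \emph{or} jointly within a size-$\ge 2$ component whose subsequent evolution also dies out to singletons; in all cases the only information from the leaf distributions that survives to the root is the collection of single-site marginals $(p_i)_x$. Hence by~\eqref{samemarg} the conditional root distributions agree, $T_t(\vec p\tc\vec G,\vec B)=T_t(\vec q\tc\vec G,\vec B)$ for $(\vec G,\vec B)\in\cE_t$, which is exactly~\eqref{nat01}. The clean way to phrase this is an induction on the tree depth: if every fragment sent to a subtree dies out, then the distribution produced at the root of that subtree depends on the leaf distributions only through their site marginals.

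The remaining estimate is immediate from Lemma~\ref{fraglemma}: choosing $\d=1$ there (say), there is an absolute $\d_0>0$ so that under~\eqref{Dcondition} one has $\widehat\nu(\cE_t^c)=\bbP(\cF_t\neq\emptyset)\le C_1 n^2 (2-\d)^{-t}$, i.e.\ $\le C n^2 e^{-ct}$ with $c=\log(2-\d)>0$. Combining the decomposition, the identity on $\cE_t$, and the trivial bound $\|\cdot\|_{\rm TV}\le 1$ on the complement, exactly as in~\eqref{converge1}, gives
\begin{gather}
\|T_t(\vec p)-T_t(\vec q)\|_{\rm TV}\le \widehat\nu(\cE_t^c)\,\max_{(\vec G,\vec B)}\|T_t(\vec p\tc\vec G,\vec B)-T_t(\vec q\tc\vec G,\vec B)\|_{\rm TV}\le C n^2 e^{-ct},
\end{gather}
which is~\eqref{convergiarb}. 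Theorem~\ref{th:mainthla} then follows by taking all $p_i=p$ and all $q_i=\mu_{\bf J,h}$ and invoking the stationarity~\eqref{eq:inv}, since $p$ and $\mu_{\bf J,h}$ share their site marginals by the choice of~${\bf h}$.

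The main obstacle is the second paragraph: making the informal ``on $\cE_t$ only the marginals survive'' argument fully rigorous. One has to carefully set up the conditional dynamics $T_t(\cdot\tc\vec G,\vec B)$ so that the dependence of $\mu_G(\cdot\tc\si_{V_G},\si'_{V_G})$ on the incoming configurations is correctly propagated, verify that a site leaving a fragment (via a coin flip, or via a singleton component) genuinely decouples from all leaf information except its own marginal, and check that these local decouplings compose correctly up the tree---this is where the product structure~\eqref{prodstr} of $\mu_G$ over connected components, and the independence built into $\widehat\nu$, are essential. The probabilistic input (Lemma~\ref{fraglemma}) is quoted as a black box, so the burden here is entirely in the combinatorial/structural reduction.
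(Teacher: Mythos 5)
Your proposal follows essentially the same route as the paper: the decomposition \eqref{nugas1} via the coupling of Lemma~\ref{lem:decca}, the identification of the surviving leaf information with the fragments of the fragmentation-plus-noise process, the identity \eqref{nat01} on the extinction event, and Lemma~\ref{fraglemma} to bound its complement, combined exactly as in \eqref{converge1}. The only slip is the choice ``$\d=1$'': Lemma~\ref{fraglemma} requires $\d\in(0,1)$, and with $\d=1$ your rate $c=\log(2-\d)$ would vanish, so take any fixed $\d\in(0,1)$ (e.g.\ $\d=\tfrac12$) instead.
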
   
Clearly, Theorem \ref{th:mainarb} implies Theorem \ref{th:mainthla} since we may take $p_i\equiv p\in\cP(\O)$ and $q_i\equiv \mu_{\bf J,h}$, where the external fields~$\bf h$ are chosen in such a way that $p$ and $  \mu_{\bf J,h}$ have the same marginals.   

We shall prove Theorem \ref{th:mainarb} by analyzing the interaction history backwards in time, i.e., from the root to the leaves. This is  reminiscent of the coupling from the past approach for linear Markov chains \cite{propp1996exact,LPW}, and to some extent our proof is inspired by ideas that have been developed in that context. In particular, our proof is related to the information percolation framework developed by Lubetzky and Sly in \cite{lubetzky2017universality}.  

Each internal node of the tree is associated with an interaction, or collision, which according to \eqref{pq2} is specified by the random set $\L$ with distribution $\g(\cdot\tc \si,\si')$. For each such interaction we reveal the realization of the graph $G$ and of the Bernoulli variables $B$ in $V\setminus V_G$ that are used in sampling $\Lambda$; see  Lemma \ref{lem:decca}. 
In this way, starting from the root, we have a pair $(G,B)$, $G\in\cG$ and a subset $B\subseteq V\setminus V_G$ is identified with the set 
of $x\in V\setminus V_G$ for which $\eta_x=+1$. Suppose  the descendants of the root have distributions $p$ and $q$ respectively, as in Figure \ref{fig2}. Then, according to Lemma \ref{lem:decca},  the distribution at the root is given by
\begin{align}\label{eq:pqtas}
p\circ q=\sum_{(G,B)} \nu(G,B)\,  T(p,q\tc G,B)\,,
\end{align}
where $\nu(G,B)=2^{-|V\setminus V_G|}\nu_{\bf J}(G)$, and             
for each realization $(G,B)$, 
$ T(p,q\tc G,B)\in\cP(\O)$ is the distribution
\begin{align}\label{eq:pqt}
 T(p,q\tc G,B)(\t)&=\sum_{\si,\si'} p(\si)q(\si')\sum_{\eta_{V_G}}\mu_{G}(\eta_{V_G}\tc \si_{V_G},\si'_{V_G})\,.
\,\ind_{\t\sim (\si,\si',\eta_{V_G},B)}.
\end{align}
Here, for $\t\in\O$, the notation $\t\sim (\si,\si',\eta_{V_G},B)$ is shorthand for 
\begin{align}\label{eq:pqt2}
\t\sim (\si,\si',\eta_{V_G},B)\;\Leftrightarrow\;\begin{cases}\t_x=\si_x\,,& x\in V:\,\eta_x=+1;\\
\t_x=\si'_x\,,& x\in V:\,\eta_x=-1,
\end{cases}\end{align}
with the understanding that the value of $\eta$ on $ V\setminus V_G$ is specified by $B\subseteq  V\setminus V_G$, i.e., $\eta_y=+1$ for $y\in B$ and $\eta_y=-1$ for $y\notin B$.

For every $G\in\cG$, $\mu_{G}(\cdot\tc \si_{V_G},\si'_{V_G})
$ depends on $(\si,\si')$ only through $(\si_{V_G},\si'_{V_G})$, and for every $\eta_{V_G}\in\{-1,1\}^{V_G}$, $\t\in\{-1,1\}^n$ and $B\subseteq V\setminus V_G$, the condition \eqref{eq:pqt2} 
depends on $(\si,\si')$ only through
\[
\{\si_x,\,\,x\in V_G \cup B\}\,,\qquad \{\si'_x,\,\,x\in V_G \cup ((V\setminus V_G)\setminus B)\}\,.
\]
Therefore, to compute the distribution $ T(p,q\tc G,B)$ we only need the marginals $p_{V_G\cup B}$ and $q_{V_G\cup B'}$, where $B' =  (V\setminus V_G)\setminus B$. 
Note that we may identify $V_G\cup B$ with the set $\Phi_0([n])$ and $V_G\cup B'$ with the set $\Phi_1([n])$, where $\Phi_0,\Phi_1$ are the maps defined in \eqref{phi011}. Indeed, by definition of the measure $\nu_{\bf J}$, these random sets have the same distribution since, when $A=[n]$,  $V_G$ is equivalent to $A'$ and $B$ is equivalent to $A_{\rm in}$. One way to rephrase this is to say that, as far as the distribution $ T(p,q\tc G,B)$ is concerned, the only relevant information about the distribution $p$ is contained in the set $\Phi_0([n])$ and the only relevant information about the distribution $q$ is contained in the set $\Phi_1([n])$.

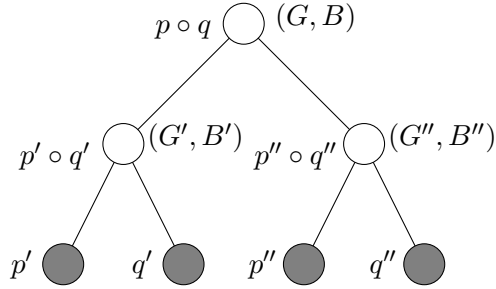
\begin{figure}[h]
\center

\begin{tikzpicture}[scale=0.8]
    
    \draw  (1,-1) -- (2,1);
    \draw  (3,-1) -- (2,1);
   \draw  (2,1) -- (4,3);
    \draw  (5,-1) -- (6,1);
    \draw  (7,-1) -- (6,1);
    \draw  (6,1) -- (4,3);
    
    \node[shape=circle, draw=black, fill = white, scale = 1.5]  at (4,3) {}; 
    \node[shape=circle, draw=black, fill = white, scale = 1.5]  at (2,1) {};
    \node[shape=circle, draw=black, fill = gray, scale = 1.5]  at (1,-1) {};
    \node[shape=circle, draw=black, fill = gray, scale = 1.5]  at (3,-1) {};
    \node[shape=circle, draw=black, fill = white, scale = 1.5]  at (6,1) {};
    \node[shape=circle, draw=black, fill = gray, scale = 1.5]  at (5,-1) {};
    \node[shape=circle, draw=black, fill = gray, scale = 1.5]  at (7,-1) {};
    
    \node at (.25,-1) {$\;p'$};
    \node at (2.25,-1) {$\;q'$};
    \node at (4.25,-1) {$\;p''$};
    \node at (6.25,-1) {$\;q''$};
    \node at (1,.8) {$p' \circ q'\;\;$};
    \node at (3.2,1.1) {$(G',B')$};
\node at (5,.8) {$p'' \circ q''\;\;$};
    \node at (7.3,1.1) {$(G'',B'')$};

    \node at (3,2.9) {$p \circ q$};
\node at (5.2,3.1) {$(G,B)$};

\end{tikzpicture}
\caption{Graphical representation of the distribution at the root,  when $t=2$ and the leaves are equipped with distributions $p',q',p'',q''$. Each internal node is equipped with a realization of the random pair $(G,B)$, where $G\in\cG$ and $B\subseteq V\setminus V_G$. }
\label{fig2}
\end{figure}

Next, we move one step backwards in time and consider the interaction that produced the distribution $p$ from the previous computation. Suppose that $p',q'$ are the distributions at the two descendants of~$p$ respectively, so that $p=p'\circ q'$, as in Figure~\ref{fig2}. Suppose we revealed the realization $(G',B')$ of the graph and Bernoulli variables associated with this interaction. Note that we can use the same expressions \eqref{eq:pqt}-\eqref{eq:pqt2} to compute $ T(p',q'\tc G',B')$, provided we replace $(p,q)$ by $(p',q')$ and $(G,B)$ by $(G',B')$. However, the key point is that we now only need the marginal of $p$ on the set $\Phi_0([n])$, and therefore when we compute $ T(p',q'\tc G',B')(\t)$ as above we can sum away all $\t_y$, $y\notin \Phi_0([n])$, so that the indicator function \eqref{eq:pqt2} is relevant only for sites $x\in \Phi_0([n])$, and the distribution of $\eta_x$, $x\in V_{G'}\cap \Phi_0([n])$, under $\mu_{G'}(\cdot\tc \si_{V_{G'}},\si'_{V_{G'}})$
 is only influenced by the spins $\si_y,\si'_y$ for $y\in G'(\Phi_0([n]))$, where we recall that $G'(A)$ is the 
 union of all connected components of $G'$ that have nonempty intersection with the set~$A$. The latter property is a consequence of the product structure of the measure $\mu_{G'}(\cdot\tc \si_{V_{G'}},\si'_{V_{G'}})$; see Lemma~\ref{lem:decca}. 
 Hence we can neglect all connected components of $G'$  that do not intersect $\Phi_0([n])$, and we can discard the information about all Bernoulli variables at sites $y\in V\setminus V_{G'}$ such that $y\notin \Phi_0([n])$. A close inspection of our definition of the maps $\Phi_0$ and $\Phi_1$ then reveals that the only information about the distributions $p',q'$ that is needed to compute $ T(p',q'\tc G',B')$ is contained in the marginals $p'_{\Phi_0(\Phi_0([n]))}$ and $q'_{\Phi_1(\Phi_0([n]))}$. 
 
 Similarly, considering the interaction which produced the distribution $q=p''\circ q''$ (see Figure \ref{fig2}),  we may fix a realization $(G'',B'')$ of the graph and Bernoulli variables, and repeating the above reasoning one has that the only information about the distributions $p'',q''$ that is needed to compute $ T(p'',q''\tc G'',B'')$ is contained in the marginals $p''_{\Phi_0(\Phi_1([n]))}$ and $q''_{\Phi_1(\Phi_1([n]))}$. We note that, after two steps of the evolution, conditional on the realizations of the variables $(G,B),(G',B'),(G'',B'')$, we have obtained a probability measure at the root depending only on $(G,B),(G',B'),(G'',B'')$ and on the marginals \[p'_{\Phi_0(\Phi_0([n])))},q'_{\Phi_1(\Phi_0([n]))},p''_{\Phi_0(\Phi_1([n]))},q''_{\Phi_1(\Phi_1([n]))}.\]
Equivalently, using the notation introduced in Definition \ref{def:fragnoise}, after two steps we have that the correlations of the initial distributions at the leaves are entirely contained in the fragments
\[
\cF_2=(F^{(2)}_1,F^{(2)}_2,F^{(2)}_3,F^{(2)}_4) = (\Phi_0(\Phi_0([n])),\Phi_1(\Phi_0([n])),\Phi_0(\Phi_1([n])),\Phi_1(\Phi_1([n]))).
\]

\begin{example}\label{ex:n4} 
Consider the following simple example with $n=4$. 
Suppose $G=\{\{1,2\}\}$, i.e., $G$ consists of the single edge $\{1,2\}$ and suppose that $B=\emptyset$, i.e., both $3$ and $4$ are {\em out}. This gives $F^{(1)}_1=\Phi_0([n]) = \{1,2\}$, $F^{(1)}_2=\Phi_1([n]) =\{1,2,3,4\}$.
Suppose also that $G'=\{\{3,4\}\}$, and $B'=\{1\}$. Thus $F^{(2)}_1 = \{1\}$, and $F^{(2)}_2=\{2\}$. Suppose finally that $G''=\emptyset$, $B''=\{1,2\}$. Then $F^{(2)}_3 = \{1,2\}$, and $F^{(2)}_4=\{3,4\}$. Thus, one has the following fragmentation after two steps
\begin{gather}
\cF_0=\{1,2,3,4\}\,,\\
\cF_1=(\{1,2\},\{1,2,3,4\})\,,\\
\cF_2=(\{1\},\{2\},\{1,2\},\{3,4\}) .
\end{gather}
Note that in this example, conditionally on the given realizations of the variables $(G,B),(G',B')$, and $(G'',B'')$, the distribution  $p\circ q$ at the root in Figure \ref{fig2} can be computed only using the marginals \[(p')_{\{1\}},(q')_{\{2\}},(p'')_{\{1,2\}},(q'')_{\{3,4\}}\] of the input distributions $p',q',p'',q''$.
\end{example}
Repeating the above procedure one has that, after $t$ steps, conditional on the realizations of all graphs and Bernoulli variables involved in each of the $2^t-1$ interactions, the only information needed from leaf~$i$ is contained in the marginal of the distribution at that leaf on the fragment~$F^{(t)}_i$, for each $i=1,\dots,2^t$, as defined in   
Definition~\ref{def:fragnoise}. The crucial observation is that, as soon as a fragment either becomes empty or contains one site only, then the information carried by the corresponding leaf is irrelevant. Indeed, if it is empty this is obvious, while if it contains one site only then the marginal at that site is irrelevant since all marginals are assumed to be fixed, and in particular they are the same in any choice of the initial conditions~$\vec p$ or~$\vec q$. This explains why we introduced the killing step~\eqref{phi01} in our definition of the fragmentation plus noise process~$\cF_t$, which in turn is crucial to the probability of extinction we are able to establish in Lemma~\ref{fraglemma}.

The above discussion shows that, if we denote by 
\[
(\vec G,\vec B)=(G^{(1)},B^{(1)},\dots,G^{(2^t-1)},B^{(2^t-1)})
\] 
the vector of realizations of the random graphs and Bernoulli variables involved in each interaction at the $2^t-1$ internal nodes of the binary tree of depth $t$, one can write
\begin{gather}\label{eqcq1}
 T_t(\vec p)=\sum_{(\vec G,\vec B)} \widehat\nu(\vec G,\vec B)\,  T_t(\vec p\tc \vec G,\vec B)
\end{gather}
 where 
 \begin{gather}\label{eqcq2a1}
 \widehat\nu(\vec G,\vec B)=\prod_{i=1}^{2^t-1}
 \nu_{\bf J}(G^{(i)})\,2^{-|V\setminus V_{G^{(i)}}|}
 \end{gather}
  is the distribution of the random graphs and Bernoulli variables, while $ T_t(\vec p\tc \vec G,\vec B)$ is some probability measure that may depend on $(\vec G,\vec B)$ and on $\vec p=(p_1,\dots,p_{2^t})$ in a complicated way but has the property that its dependence on the distribution $p_i$ from the $i$-th leaf occurs only through the marginal of $p_i$ on the fragment $F^{(t)}_i$. In particular, if $\cF_t=\emptyset$, i.e., $F^{(t)}_i = \emptyset$ for all $i$, then 
 $ T_t(\vec p\tc \vec G,\vec B)= T_t(\vec q\tc \vec G,\vec B)$. We note that the event $\cF_t=\emptyset$ is measurable with respect to $(\vec G,\vec B)$, so that we may write
 \begin{gather}\label{eqcq2}
 T_t(\vec p\tc \vec G,\vec B)= T_t(\vec q\tc \vec G,\vec B)\,,\qquad (\vec G,\vec B)\in \{\cF_t=\emptyset\}\,,
\end{gather}
and hence
\begin{gather}\label{convergiarba}
\| T_t(\vec p)- T_t(\vec q)\|_{\rm TV}\leq 
\sum_{(\vec G,\vec B)\notin\{\cF_t=\emptyset\}} \widehat\nu(\vec G,\vec B)\,.
\end{gather}
Next, we note that 
\begin{gather}\label{convergiarbat}
\sum_{(\vec G,\vec B)\notin\{\cF_t=\emptyset\}} \widehat\nu(\vec G,\vec B)
=\bbP(\cF_t\neq\emptyset)\,,\qquad t\in\bbN\,,
\end{gather}
where the latter is the probability estimated in Lemma \ref{fraglemma}. Indeed, \eqref{convergiarbat} is a consequence of our definition of the fragmentation plus noise  process: we have already observed that each step of the fragmentation 
\[
F^{(t-1)}_i\longrightarrow (\Phi_0(F^{(t-1)}_i),\Phi_1(F^{(t-1)}_i))
\]
is produced with the correct distribution,   and  the product structure \eqref{eqcq2a1} of the measure $\widehat\nu(\vec G,\vec B)$ guarantees that all such steps are performed independently. From Lemma \ref{fraglemma} we thus conclude that,
for any $\d\in(0,1)$, there exists a constant $\d_0>0$ in~\eqref{Dcondition}, and a constant $C_\d>0$ such that 
\begin{align}\label{dkzo}
\| T_t(\vec p)- T_t(\vec q)\|_{\rm TV}\leq 
 n^2C_\d\,(2-\d)^{-t}\,,\qquad t=1,2,\dots
\end{align}
This implies \eqref{convergiarb} (and in fact shows that we can take the constant $c$ as close as we wish to $\log 2$ provided $\d_0$ is taken suitably small).
This ends the proof of Theorem \ref{th:mainarb} and Theorem \ref{th:mainthla}. \hfill $\square$

We now need to provide the missing proof of Lemma~\ref{fraglemma}.

\subsection{Proof of Lemma \ref{fraglemma}}\label{sec:pflem38}
The event $\cF_t\neq\emptyset$ implies that there exists a fragment $F^{(t-1)}_i$ at time $t-1$ that has cardinality $|F^{(t-1)}_i|\geq 2$. Indeed, by construction, if all fragments 
have size at most $1$ at time $t-1$, then $\cF_t=\emptyset$; see \eqref{phi01}. Since there are $2^{t-1}$ fragments at time $t$, and since all the $F^{(t-1)}_i$ have the same distribution, by a union bound it suffices to show that for any $\d\in(0,1)$, 
\begin{align}\label{dkz1}
\bbP(|F^{(t-1)}_1|\geq 2)\leq n^2C_\d\,(4-\d)^{-t}\,,\qquad t=1,2,\dots\,,
\end{align}
provided \eqref{Dcondition} holds with a sufficiently small $\d_0>0$.
 To prove \eqref{dkz1} we shall use a 
 stochastic domination argument that bounds the evolution of the fragment $F^{(t-1)}_1$, $t\geq 1$ by means of independent {\em labeled branching processes}.

 Consider 
  $n$ independent  processes $X^y:=\{X^y_\ell, \;\ell=0,1,\dots\}$, $y\in[n]$, such that for each $y\in[n]$, $X^y$ is the labeled branching process with $X^y_0=\{y\}$ and such that, at time $t\in\bbN$, each individual with label~$x$ in the $(t-1)$-th generation independently gives birth to the set of individuals $U\subseteq [n]$ with offspring distribution 
  \begin{align}\label{offspringx}
 \mu_x (U) = \begin{cases}\tfrac12(1- \r_x)&U=\emptyset\;\;\text{or}\;\; U=\{x\};\\
 \sum_{G\in\cG}\nu_{\bf J}(G)\,\ind_{G(x)=U} & |U|\geq 2,
 \end{cases}
 \end{align}
where $G(x)$ denotes the connected component of $G$ containing~$x$ and, 
for any $x\in [n]$,
\begin{align}\label{offspringa}
\r_x:= 1-\prod_{z\in[n]\setminus\{x\} }(1-p_{xz}) = \sum_{G\in\cG}\nu_{\bf J}(G)\,\ind_{G(x)\neq\emptyset}
 \end{align}
is the probability that $x$ has a non-empty neighborhood in the random graph defined by \eqref{inhomER}. 
Notice that by definition either $G(x)$ is empty or $|G(x)|\geq 2$, and therefore \eqref{offspringx}, for any $x\in[n]$, defines a probability measure on subsets of $[n]$: a sample $U$ from $\mu_x$ is obtained by first sampling the neighborhood $G(x)$ from $\nu_{\bf J}$; if $|G(x)|\geq 2$ then we set $U=G(x)$; if $G(x)=\emptyset$ then we flip a fair coin and set $U=\emptyset$ if heads and $U=\{x\}$ if tails.

Let $N^y(t)$
denote the size of the whole population of the labeled branching process $X^y$ at time $t-1$, i.e., the total number of individuals generated up to time $t-1$. The proof of Lemma \ref{fraglemma} is based on the following bound on the exponential moment of the random variable~$X^y(t)$. 

\begin{lemma}\label{lem:expmom}
For all $a\in(0,1)$, there exists $\d_0>0$ and $C_a>0$ such that if \eqref{Dcondition} holds with constant~$\d_0$ then,
for all $y\in[n]$ and all $t\in\bbN$, 
\begin{align}\label{expmom}
\bbE[2^{aN^y(t)}] \leq C_a \,.
\end{align}
\end{lemma}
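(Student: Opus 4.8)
The plan is to run the natural one‑step recursion for the total progeny and close it by a fixed‑point argument; the mechanism that makes this work is that, under \eqref{Dcondition}, the ``genuinely branching'' part of the offspring law $\mu_x$ is a rare event and the rest contributes at most one child per individual, with mean exactly $\tfrac12<1$. First I would condition on the first generation of $X^y$. By the definition of the process, if $U\sim\mu_y$ is the offspring set of the root, each $x\in U$ spawns an independent fresh copy of $X^x$, so for $t\ge 2$ we have the distributional identity $N^y(t)=1+\sum_{x\in U}N^x(t-1)$ with the summands independent given $U$. Writing $f_t:=\max_{y\in[n]}\bbE[2^{aN^y(t)}]$ (finite, with $f_1=2^a$) and using $f_{t-1}\ge1$,
\begin{align*}
\bbE\big[2^{aN^y(t)}\big]=2^{a}\,\bbE\Big[\textstyle\prod_{x\in U}\bbE\big[2^{aN^x(t-1)}\big]\Big]\le 2^{a}\,\bbE_{U\sim\mu_y}\big[f_{t-1}^{|U|}\big],\qquad t\ge 2 .
\end{align*}
So the whole problem reduces to bounding $\Psi_y(s):=\bbE_{U\sim\mu_y}[s^{|U|}]$ for $s\ge1$, and from the explicit form \eqref{offspringx}--\eqref{offspringa},
\begin{align*}
\Psi_y(s)=\tfrac12(1-\r_y)(1+s)+\psi_y(s),\qquad \psi_y(s):=\bbE_{\nu_{\bf J}}\big[s^{|G(y)|}\,\ind_{|G(y)|\ge 2}\big],
\end{align*}
where $G(y)$ is the connected component of $y$ in the inhomogeneous Erd\H{o}s--R\'enyi graph of Lemma~\ref{lem:decca}.

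The heart of the proof is a bound on $\psi_y$ using the sparsity of $\nu_{\bf J}$. Revealing $G(y)$ by breadth‑first search, at each step the number of newly discovered vertices is a sum of independent Bernoulli variables whose parameters sum to at most $\sum_{z}p_{yz}\le 4\d_0$ by \eqref{inhomER}--\eqref{inhomER2}; a supermartingale/optional‑stopping argument applied to the exploration queue (tracking $\theta^{\text{queue}}\rho^{-\text{steps}}$) then yields $\bbE_{\nu_{\bf J}}\big[s^{\,1\vee|G(y)|}\big]\le h(s)$, where $h$ is the relevant total‑progeny generating function, i.e.\ the smallest solution of $h(s)=s\,e^{4\d_0(h(s)-1)}$, finite for $1\le s\le e^{4\d_0-1}/(4\d_0)$. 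Since $\psi_y(s)=\bbE_{\nu_{\bf J}}[s^{1\vee|G(y)|}]-(1-\r_y)s\le h(s)-s+4\d_0 s$, and $h(s)\to s$ as $\d_0\to0$ (uniformly on compacts) while $\r_y\le 4\d_0$, it follows that for every fixed $M\ge1$,
\begin{align*}
\varepsilon(\d_0):=\sup_{y\in[n]}\ \sup_{1\le s\le M}\psi_y(s)\longrightarrow 0\qquad\text{as }\d_0\to0 .
\end{align*}
Hence, whenever $1\le f_{t-1}\le M$, the recursion and $1-\r_y\le1$ give $f_t\le 2^{a-1}(1+f_{t-1})+2^{a}\varepsilon(\d_0)$.

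It then remains to iterate this affine bound. For $a\in(0,1)$ we have $\beta:=2^{a-1}\in(\tfrac12,1)$, so the map $x\mapsto\beta x+(2^{a-1}+2^{a}\varepsilon)$ has the unique fixed point $x^{*}=\frac{2^{a-1}+2^{a}\varepsilon}{1-2^{a-1}}$, which decreases to $x^{*}_0:=\frac1{2^{1-a}-1}$ as $\varepsilon\to0$; moreover $2^{a}\le x^{*}_0\le x^{*}$. I would then fix $M:=2x^{*}_0$ and choose $\d_0=\d_0(a)>0$ small enough that the exploration estimate above is valid on $[1,M]$ and $\varepsilon(\d_0)$ is small enough that $x^{*}\le M$. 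An induction on $t$ now gives $f_t\le x^{*}$ for all $t$: indeed $f_1=2^{a}\le x^{*}_0\le x^{*}\le M$, and if $f_{t-1}\le x^{*}\le M$ then $f_t\le\beta x^{*}+2^{a-1}+2^{a}\varepsilon=x^{*}$. Taking $C_a:=x^{*}$ yields \eqref{expmom}.

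The only step I expect to require genuine care is the middle one: the uniform exponential‑moment estimate on the component of a vertex under $\nu_{\bf J}$ — equivalently the exponential tail of component sizes announced just before the lemma — which is what forces $\d_0$ to be small and pins down the ``subcritical'' threshold ($a<1$, i.e.\ $2^{a-1}<1$). Once that bound is in place, everything else is the routine iteration of a contracting affine recursion, and the constant $C_a$ can be read off explicitly as $x^{*}$ above (in fact $C_a$ tends to $\frac1{2^{1-a}-1}$ as $\d_0\to0$).
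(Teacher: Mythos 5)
Your proof is correct, and it reaches the same subcritical fixed point as the paper but assembles it differently. The paper first dominates the offspring size of every label by a single explicit law with geometric tail (the measure $\tilde\nu$ built from Lemma~\ref{lem:tailux}), then dominates $N^y(t)$ for all $t$ at once by the total progeny $|\tilde\cT|$ of a homogeneous Galton--Watson tree and evaluates $\bbE[u^{|\tilde\cT|}]$ as the smallest root of $s=u\tilde\varphi(s)$; you instead keep the inhomogeneity in $y$, run the one-step recursion $f_t\le 2^a\,\bbE_{\mu_y}[f_{t-1}^{|U|}]$ for $f_t=\max_y\bbE[2^{aN^y(t)}]$, bound the offspring generating function by $\tfrac12(1+s)+\varepsilon(\d_0)$, and close by iterating a contracting affine map --- which is precisely the finite-horizon iteration converging to the same smallest root, so the two arguments are mathematically equivalent but your version avoids constructing $\tilde\nu$ and any stochastic-domination step, at the price of redoing the component-size estimate. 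Two small remarks: (i) your middle step could be shortcut by quoting the paper's Lemma~\ref{lem:tailux} directly, since $\psi_y(s)\le 2\sum_{\ell\ge2}s^\ell(4\r_0)^{\ell-1}\to0$ uniformly on $[1,M]$ as $\r_0\to0$, so no fresh exploration analysis is needed; (ii) if you do keep the exploration argument, note that a sum of independent Bernoullis with parameter sum $\le 4\d_0$ is \emph{not} stochastically dominated by a Poisson of that mean, so the comparison must go through the generating-function bound $\prod_z(1+p_{yz}(s-1))\le e^{4\d_0(s-1)}$ and the supermartingale/optional-stopping route you sketch (or through the geometric domination of \eqref{mubar}, as in the paper); with that understood, your bound $\bbE[s^{1\vee|G(y)|}]\le h(s)$, $h=s\,e^{4\d_0(h-1)}$, is valid for $s$ in the stated range. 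The dependence $\d_0=\d_0(a)$ and the explicit constant $C_a=x^{*}$ are consistent with what the paper obtains.
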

We postpone the proof of Lemma \ref{lem:expmom} and conclude the proof of Lemma~\ref{fraglemma} assuming the validity of this bound. 

Now denote by $X^y_{\ell}$ the set of all individuals generated at the $\ell$-th step, i.e., the $\ell$-th generation of the process $X^y$, and let $|X^y_{\ell}|$ denote its cardinality. 
With this notation, an inspection of the definition of the fragmentation plus noise process shows that
$ F^{(t-1)}_1$ is stochastically dominated by the union of the $X^y$'s, i.e., $\{F^{(\ell-1)}_1,\,\ell\in\bbN\}$ and the independent processes $\{X^y,\,y\in[n]\}$ can be coupled so that, for any $\ell=1,2,\dots$,
\begin{align}\label{F1t-1}
 |F^{(\ell-1)}_1|\leq 
 \sum_{y\in[n]}|X^y_{\ell-1}|
\,. \end{align}
The main observation at this point is that, from the definition of fragmentation plus noise, 
the event $|F^{(t-1)}_1|\geq 2$ implies that $|F^{(\ell-1)}_1|\geq 2$ for all $1\leq \ell\leq t$. 
This in turn, by the domination \eqref{F1t-1}, implies that at all times $1\leq \ell\leq t-1$ one has 
\[
\sum_{y\in[n]}|X^y_{\ell}|
\geq 2.
\] 

For this to happen 
   there must be two processes $X^y,X^z$ and a time $s<t$ such that both $X^y,X^z$ are alive up to time $s$ and such that $X^y$ has at least two individuals in each generation from time $s+1$ to time $t-1$. For example, taking $s=t-1$, this includes the case where both $X^y,X^z$ are alive up to time $t-1$, while taking $s=0$ it includes the case where all processes die at the first time step, except for $X^y$ which has $|X^y_\ell|\geq 2$ for all $1\leq \ell\leq t-1$. We refer to Figure~\ref{fig12} for an illustrative example. 

\begin{figure}[h]
\center

\begin{tikzpicture}[scale=0.8]
    
    \draw  (0,1.5) -- (10,1.5);
    \draw  (0,7) -- (10,7);
    
    \draw  [dashed,black] (0,3.5) -- (10,3.5);

\draw  (3,6.9) -- (3,6.5);
    \draw  (9,6.9) -- (9,6.5);

    \draw  (5,6.9) -- (5,6.5);
     \draw  (5,6.5) -- (5.5,6.5);
     \draw  (5,6.5) -- (4.5,6.5);
\draw  (4.5,6.5) -- (4.5,6);
\draw  (5,6.5) -- (5.5,6.5);
\draw  (5.5,6.5) -- (5.5,6);
\draw  (5.5,6) -- (5,6);
\draw  (6,6) -- (5.5,6);
\draw  (5,5.5) -- (5,6);
\draw  (6,5.5) -- (6,6);

 \draw  (7,6.9) -- (7,6.5);
     \draw  (7,6.5) -- (7.5,6.5);
     \draw  (7,6.5) -- (6.5,6.5);
\draw  (6.5,6.5) -- (6.5,6);
\draw  (7,6.5) -- (7.5,6.5);
\draw  (7.5,6.5) -- (7.5,6);
\draw  (7.5,6) -- (7,6);
\draw  (8,6) -- (7.5,6);
\draw  (7,5.5) -- (7,6);
\draw  (8,5.5) -- (8,6);

\draw  (7,5.5) -- (7.5,5.5);
      \draw  (7,5.5) -- (6.5,5.5);
\draw  (6.5,5.5) -- (6.5,5);
\draw  (7.5,5.5) -- (7.5,5);
\draw  (6.5,5) -- (6.5,4.5);
\draw  (6.5,4.5) -- (5.5,4.5);
\draw  (7,4.5) -- (6.5,4.5);
\draw  (7,4.5) -- (7,4);
\draw  (5.5,4.5) -- (5.5,4);
\draw  (6,4.5) -- (6,4);
\draw  (5.5,4) -- (5.5,2.5);

  \draw  (1,6.9) -- (1,6.5);
     \draw  (1,6.5) -- (1.5,6.5);
     \draw  (1,6.5) -- (0.5,6.5);
\draw  (0.5,6.5) -- (0.5,6);
\draw  (1,6.5) -- (1.5,6.5);
\draw  (1.5,6.5) -- (1.5,6);
\draw  (1.5,6) -- (1,6);
\draw  (2,6) -- (1.5,6);
\draw  (1,5.5) -- (1,6);
\draw  (2,5.5) -- (2,6);
\draw  (1,5.5) -- (1,3);

     \draw  (1,3) -- (1.5,3);
     \draw  (1,3) -- (0.5,3);
\draw  (0.5,3) -- (0.5,2.5);
\draw  (1,3) -- (1.5,3);
\draw  (1.5,3) -- (1.5,2.5);
\draw  (0.5,1.5) -- (0.5,2.5);
\draw  (1.5,1.5) -- (1.5,2.5);
\draw  (1.5,1.5) -- (1.5,2.5);

\draw  (1.5,3) -- (2,3);
\draw  (2,3) -- (2,2.5);

    \node[shape=circle, draw=black, fill = white, scale = .5]  at (1,7) {}; 
     \node[shape=circle, draw=black, fill = white, scale = .5]  at (3,7) {};
      \node[shape=circle, draw=black, fill = white, scale = .5]  at (5,7) {};
       \node[shape=circle, draw=black, fill = white, scale = .5]  at (7,7) {};
        \node[shape=circle, draw=black, fill = white, scale = .5]  at (9,7) {};  
           
    \node[shape=rectangle, draw=black, fill = black, scale = .5]  at (3,6.5) {};   
   \node[shape=rectangle, draw=black, fill = black, scale = .5]  at (9,6.5) {};   
    \node[shape=rectangle, draw=black, fill = black, scale = .5]  at (7.5,5) {};   
 \node[shape=rectangle, draw=black, fill = black, scale = .5]  at (6,4) {};  
  \node[shape=rectangle, draw=black, fill = black, scale = .5]  at (7,4) {};  
  \node[shape=rectangle, draw=black, fill = black, scale = .5]  at (2,2.5) {};  
   
    \node[shape=rectangle, draw=black, fill = black, scale = .5]  at (4.5,6) {};  
      \node[shape=rectangle, draw=black, fill = black, scale = .5]  at (6,5.5) {};   
      \node[shape=rectangle, draw=black, fill = black, scale = .5]  at (5,5.5) {};   
      \node[shape=rectangle, draw=black, fill = black, scale = .5]  at (5.5,2.5) {};   
      \node[shape=rectangle, draw=black, fill = black, scale = .5]  at (2,5.5) {};   
      \node[shape=rectangle, draw=black, fill = black, scale = .5]  at (0.5,6) {};   
         
        \node[shape=rectangle, draw=black, fill = black, scale = .5]  at (6.5,6) {};  
      \node[shape=rectangle, draw=black, fill = black, scale = .5]  at (8,5.5) {};   
      \node[shape=rectangle, draw=black, fill = black, scale = .5]  at (5,5.5) {};

    \node at (1,7.5) {$y$};
    \node at (7,7.5) {$z$};

\end{tikzpicture}
\caption{Illustration of the event in~\protect\eqref{expmoma1}. The dashed line represents a time $s$ up to which
both processes $X^y$ and $X^z$ have cardinality at least $1$ and after which the process $X^y$ has cardinality at least $2$. }
\label{fig12}
\end{figure}
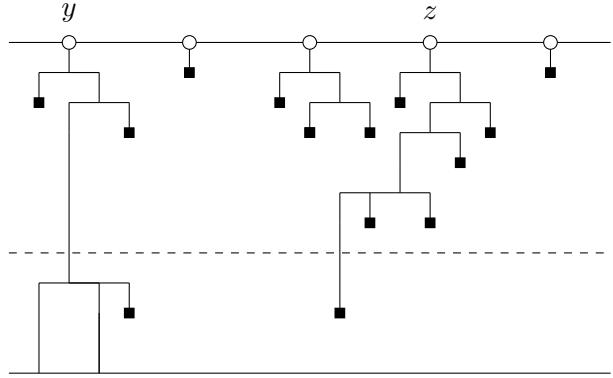

Using the independence of $X^y,X^z$, and 
   the union bound one has  the estimate
\begin{align}  \label{expmoma1}
&\bbP(|F^{(t-1)}_1|\geq 2) 
\\
& \quad \leq
\sum_{y,z\in[n]}\sum_{s=0}^{t-1}
 \bbP\bigl(|X^y_\ell|\geq 1 \,,\;\forall \,0\leq \ell\leq s\,, |X^y_\ell|\geq 2\,,\;\forall s< \ell< t\bigr)\,\bbP\bigl(|X^z_\ell|\geq 1\,,\;\forall \,0\leq \ell\leq s\bigr).
\nonumber
\end{align}
Letting $N^y(t)$
denote the size of the whole population of $X^y$ up to time $t-1$, one has 
\[
N^y(t)=\sum_{\ell=1}^t|X^y_{\ell-1}|=1+\sum_{\ell=1}^{t-1}|X^y_{\ell}|.
\] 
We note that for any $s\geq 0$, the event $\{|X^y_\ell|\geq 1\,,\;\forall\,0\leq \ell\leq s\}$ implies $N^y(s+1)\geq s+1$, and the events $\{|X^y_\ell|\geq 1\,,\;\forall \,0\leq \ell\leq s\}$ and $\{|X^y_\ell|\geq 2\,,\;\forall s< \ell< t\}$ together imply \[N^y(t)\geq 2(t-s-1) + s+1 = 2t - s-1.\]  Therefore, for any fixed $a\in(0,1)$, from Lemma~\ref{lem:expmom} and Markov's inequality we obtain
\begin{align}\label{expmoma2}
\bbP(|X^y_\ell|\geq 1\,,\;\forall \ell\leq  s\,, |X^y_\ell|\geq 2\,,\;\forall s< \ell< t)\,\bbP(|X^z_\ell|\geq 1\,,\;\forall \ell\leq s)\leq C_a^2 \,2^{-2at}.
\end{align}
  In conclusion, we deduce that
  \begin{align}\label{expmoma3}
\bbP(|F^{(t-1)}_1|\geq 2)  \leq 
n^2 t \,C_a^2 \,2^{-2at}.
\end{align}
Since $a$ can be taken arbitrarily close to $1$, this proves the desired estimate \eqref{dkz1} by taking $t2^{-2at}\leq (4-\d)^{-t}$ for all $t$ large enough and adjusting the value of the constant~$C_\delta$ in order to cover all values of $t\in\bbN$.  This concludes the proof of Lemma~\ref{fraglemma}.\qed

Finally, it remains only to prove Lemma \ref{lem:expmom}. 

\subsection{Proof of Lemma \ref{lem:expmom}}
We note that $a$ cannot be taken larger than $1$ since at each step there is a probability at least $1/2$ of staying alive, so that the event $N^y(t)\geq t$ has probability at least $2^{-(t-1)}$.  We start by considering the distribution of the component $G(x)$, namely
 \begin{align}\label{offspringax}
 \nu_x (U) =  \sum_{G\in\cG}\nu_{\bf J}(G)\,\ind_{G(x)=U}\,,
 \end{align}
 and establish an exponential tail bound on its size. To this end,  
 define
  \begin{align}\label{rhobar}
\r_0:=\max_x\sum_{y\neq x}p_{xy}\,,
\end{align} 
and note from the discussion at the end of Section~\ref{sec:coupling} that $\r_0\leq 4\d_0$, where $\d_0$ is the constant in the Dobrushin condition~\eqref{Dcondition}.
Now observe that if $\r_x$ is 
given by \eqref{offspringa}, then by the union bound
  \begin{align}\label{rhozero}
\r_x=1-\nu_x (\emptyset)\leq  \sum_{y\neq x}p_{xy} \leq \r_0\,.
\end{align}
 Let $\cU_x$ denote the random variable with distribution $\nu_x$.
Note that either $|\cU_x|=0$ or $|\cU_x|\geq 2$. We are going to prove a tight tail estimate for $|\cU_x|$. 
\begin{lemma}\label{lem:tailux}
Suppose $\r_0<1/4$. For every $x\in[n]$,  the size of the connected component $\cU_x$ 
satisfies \begin{align}\label{emom1}
\bbP(|\cU_x|\geq \ell) \leq  2(4\r_0)^{\ell-1}
\,,\qquad \ell\geq 2. 
\end{align}
\end{lemma}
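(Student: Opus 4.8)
The plan is to realize $\cU_x$ as the connected component of $x$ in the inhomogeneous Erd\H{o}s--R\'enyi graph in which each edge $\{x,y\}$ is present independently with probability $p_{xy}=1-e^{-4|J_{xy}|}$ (deleting isolated vertices, as in the definition of $\cG$, does not change components of size $\ge 2$, so for $\ell\ge 2$ the event $\{|\cU_x|\ge\ell\}$ is exactly the event that the component of $x$ in this graph has at least $\ell$ vertices), and then to control the component size by a breadth-first exploration. By \eqref{rhobar} we have $\sum_{y\neq x}p_{xy}\le\rho_0$ for every $x$; we may assume $\rho_0>0$, as otherwise the claim is trivial.

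First I would explore the component of $x$ by a standard breadth-first search: process the discovered vertices one at a time in a fixed order $v_1=x,v_2,\dots$, and when processing $v_s$ reveal the indicators $\ind[\{v_s,w\}\in G]$ for each $w$ in the current set $W_s$ of not-yet-discovered vertices; let $\xi_s$ denote the number of successes (the number of vertices newly discovered at step $s$), with the convention $\xi_s:=0$ once the exploration has terminated. Since we only ever query edges leading to not-yet-discovered vertices, each edge of the graph is tested at most once, so the indicators revealed at distinct steps are mutually independent. The elementary observation driving the bound is that if $|\cU_x|\ge\ell$ then the exploration performs at least $\ell-1$ steps and has discovered at least $\ell-1$ vertices after step $\ell-1$; that is,
\[
\{|\cU_x|\ge\ell\}\ \subseteq\ \Big\{\,\xi_1+\dots+\xi_{\ell-1}\ \ge\ \ell-1\,\Big\}.
\]

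The main step is an exponential-moment estimate for $S:=\xi_1+\dots+\xi_{\ell-1}$. Let $\cH_{s-1}$ denote the $\sigma$-algebra generated by the first $s-1$ steps of the exploration; conditionally on $\cH_{s-1}$ (which determines $v_s$ and $W_s$), $\xi_s$ is a sum of independent Bernoulli random variables with parameters $p_{v_s,w}$, $w\in W_s$, so for any $\theta>1$,
\[
\bbE\bigl[\theta^{\xi_s}\,\big|\,\cH_{s-1}\bigr]=\prod_{w\in W_s}\bigl(1+(\theta-1)p_{v_s,w}\bigr)\le\exp\Bigl((\theta-1)\!\!\sum_{w\in W_s}\!\! p_{v_s,w}\Bigr)\le e^{(\theta-1)\rho_0},
\]
a deterministic upper bound (which also holds trivially when $\xi_s=0$). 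Iterating this conditioning over $s=\ell-1,\dots,1$ gives $\bbE[\theta^{S}]\le e^{(\theta-1)(\ell-1)\rho_0}$, and Markov's inequality then yields
\[
\bbP(|\cU_x|\ge\ell)\le\bbP(S\ge\ell-1)\le\theta^{-(\ell-1)}\,e^{(\theta-1)(\ell-1)\rho_0}=\Bigl(\tfrac{e^{(\theta-1)\rho_0}}{\theta}\Bigr)^{\ell-1}.
\]
Choosing $\theta=1/\rho_0>1$ (legitimate since $\rho_0<1/4$) makes the right-hand side $\bigl(\rho_0e^{1-\rho_0}\bigr)^{\ell-1}\le(e\rho_0)^{\ell-1}\le(4\rho_0)^{\ell-1}$, which already gives the claimed bound (indeed with a spare factor of $2$, since $e<4$). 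An essentially equivalent route is to estimate $\bbP(S\ge\ell-1)$ by a union bound over $(\ell-1)$-element subsets of the tested edges, giving the $(\ell-1)$-st elementary symmetric polynomial of the relevant edge probabilities, bounded by $((\ell-1)\rho_0)^{\ell-1}/(\ell-1)!\le(e\rho_0)^{\ell-1}$.

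I do not anticipate a genuine obstacle; the only point requiring care is the bookkeeping in the exploration — checking that distinct steps query distinct edges (so that the conditional exponential-moment bound can be iterated as a product over $s$), and that the convention $\xi_s=0$ after termination is consistent both with the inclusion of events and with the inequalities above.
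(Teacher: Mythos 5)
Your proof is correct, but it takes a genuinely different route from the paper. You bound the component of $x$ directly by a breadth-first exploration of the inhomogeneous Erd\H{o}s--R\'enyi graph, using the standard inclusion $\{|\cU_x|\ge\ell\}\subseteq\{\xi_1+\dots+\xi_{\ell-1}\ge \ell-1\}$ together with a conditional exponential-moment (Chernoff) bound on the increments, each of which has conditional mean at most $\r_0$; all the steps check out (in particular the fact that distinct exploration steps query distinct edges, and that removing isolated vertices is immaterial for $\ell\ge 2$), and you even get the slightly sharper bound $(e\r_0)^{\ell-1}$ with no factor of $2$. The paper instead dominates $\cU_x$ by the total population of a Galton--Watson tree whose offspring distribution is the geometric-type law $\bar\mu$ of \eqref{mubar}, and extracts the tail from an exponential moment of the total population computed via the fixed-point equation $s=u\varphi(s)$ for the offspring generating function, with the choice $u=1/(4\r_0)$. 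What the paper's heavier route buys is reuse: the same domination-by-$\bar\mu$-branching-trees and total-population exponential-moment machinery is exactly what is needed again in the proof of Lemma~\ref{lem:expmom} (the bound on $\bbE[2^{aN^y(t)}]$ for the labeled branching processes), whereas your argument delivers only the tail bound of Lemma~\ref{lem:tailux} directly --- which is all the statement asks for, and your constants and hypotheses ($\theta=1/\r_0>1$) are in fact milder than the paper's.
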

\begin{proof}
We shall argue that the random variable $\cU_x$ can be stochastically dominated by the total population of a labeled branching process where at each step an individual $x$ gives birth to a set $S$ distributed according to the neighborhood of $x$. More formally, let 
$\cV_x$ denote the set of neighbors of $x$ in $G$, i.e., $\cV_x=\{y\in V_G:\, \{x,y\}\in E_G\}$ and denote by~$\g_x$  the distribution of $ \cV_x$: thus, for any $S\subseteq [n]\setminus\{x\}$, 
\begin{align}\label{offspringbx}
 \g_x (S) =  \prod_{y\in S}p_{xy}\prod_{z\in ([n]\setminus\{x\})\setminus S}(1-p_{xz}).
 \end{align}
 Consider the labeled branching process $W^x$ with initial value $W^x_0=\{x\}$ and offspring distribution $\g_x$. 
A breadth-first search starting at $x$ then shows that $\cU_x$ is stochastically dominated (in the sense of inclusion) by the total population of $W^x$, i.e., the union of all sets $W^x_\ell$, $\ell=0,1,\dots$, where $W^x_\ell$ represents the $\ell$-th generation of $W^x$. 
To control the size of  the total population of $W^x$, we note that 
for any integer $\ell\geq 1$, a union bound and the multinomial theorem show that  
\begin{align}\label{rhoxz}
\nu_{\bf J}(|\cV_x| \geq \ell)&=\sum_{S:\,|S|\geq \ell}\g_x(S)\leq \sum_{S:\,|S|=\ell}\prod_{y\in S}p_{xy}
\nonumber \\ &  \leq
\sum_{\{n_y\}:\sum_y n_y= \ell} \;\prod_y p_{xy}^{n_y}= \Bigl(\sum_{y\neq x}p_{xy}\Bigr)^\ell\leq\r_0^\ell
\,,
\end{align} 
where the third sum extends over all nonnegative integer vectors $\{n_y, y\in [n]\setminus\{x\}\}$ such that $\sum_{y}n_y = \ell$. 
We use \eqref{rhoxz} to define a random variable $\cN$ such that 
the size $|\cV_x|$ of the neighborhood at $x$ is stochastically dominated by $\cN$, and
$\cN$ has  distribution $\bar \mu$ given by 
\begin{align}\label{mubar}
\bar\mu(0)= 1-\r_0\,,\qquad  \bar\mu(\ell)= \r_0^ {\ell}(1-\r_0)\,,\;\;\ell\geq 1\,.
\end{align}
Indeed, provided $\r_0<1$, \eqref{mubar} defines  the geometric distribution with parameter $1-\r_0$, and by  \eqref{rhoxz}  one has
\begin{align}\label{rhoxza}
\nu_{\bf J}(|\cV_x| \geq \ell)
\leq \sum_{k\geq \ell}\bar\mu(k) = \r_0^\ell\,,
\end{align} 
for all $\ell\geq 0$. This implies the desired stochastic domination, so that $\cV_x$ and $\cN$ can be coupled so that  $|\cV_x|\leq \cN$. Thus, the size of the total population of $W^x$ is dominated by the total size $|\cT|$ of the Galton Watson tree $\cT$ with offspring distribution $\bar\mu$. In particular, we can use a coupling of $\cU_x$ and $\cT$ such that $|\cU_x|\leq |\cT|$.

Let $\varphi(s)=\bbE[s^\cN]$ denote the moment generating function of $\cN$. Standard calculations 
show that the extinction probability for $\cT$ is given by the smallest positive solution $s_{\rm ext}$ of $s=\varphi(s)$, and since $\bbE[\cN]=\r_0/(1-\r_0)<1$ 
here one has $s_{\rm ext}=1$. Recall also that $\varphi$ is increasing and convex. With similar calculations one finds that for any fixed $u>0$, the exponential moment 
\begin{align}\label{expom}
\bbE\left[u^{|\cT|}\right] = s_*(u)\,,
\end{align}
where $s_*(u)$ is defined as the smallest positive solution of the equation $s=u\varphi(s)$. 
Note that we are free to choose~$u$.
Let us show that, if we take $u=1/(4\r_0)$, then $s_*(u)\leq 1/(2\r_0)$. Indeed, by explicit calculation, 
\[
\varphi(s) = \frac{1-\r_0}{1-s\r_0}\,, 
\]
and $\varphi(s)\leq (1-\r_0)(1+2s\r_0)\leq 1+2s\r_0$ if $s\r_0\leq 1/2$. Therefore,
\[
u\varphi(s)=\frac{\varphi(s)}{4\r_0}\leq \varphi_0(s):= \frac1{4\r_0}+\frac{s}2.
\]
Noting that the solution of the equation $s=\varphi_0(s)$ is $s_0=1/(2\r_0)$, we see that the desired solution $s_*(u)$ of $s=u\varphi(s)$ exists and satisfies $ s_*(u)\leq 1/(2\r_0)$.

From \eqref{expmom} and the previous arguments, using Markov's inequality one finds
\begin{align}\label{expomt}
\bbP(|\cU_x|\geq \ell) \leq\bbP(|\cT|\geq \ell)
\leq (4\r_0)^\ell \bbE\left[(4\r_0)^{-|\cT|}\right] \leq 2(4\r_0)^{\ell-1}\,, \qquad \ell\geq2\,.
\end{align}
This proves \eqref{emom1}.
\end{proof}

We now turn to the proof of Lemma \ref{lem:expmom}. Recall the definition of $X^y$ as the labeled branching process with offspring distribution $\mu_x$ from \eqref{offspringx}. Call $\tilde \cU_x$ the random variable with distribution $\mu_x$. Let $\tilde \nu$ be defined as 
\begin{align}\label{nubar}
\tilde \nu(0)= \tfrac12-\r_*\,,\quad \tilde \nu(1)= \tfrac12\,,\quad \tilde \nu(\ell)= 2(1-4\r_0)(4\r_0)^ {\ell-1}\,,\;\;\ell\geq 2\,,\end{align}
where $\r_*=8\r_0$. Note that this is a well defined probability provided $16\r_0\leq 1$.
Let $\tilde \cN$ denote the integer valued random variable with distribution $\tilde \nu$. 
Lemma \ref{lem:tailux} implies that $|\tilde \cU_x|$ is stochastically dominated by $\tilde \cN$. To see this, note that 
for any $k\geq 2$, the event $|\tilde \cU_x|=k$
has the same probability as the event $|\cU_x|=k$, where $\cU_x$ is the random variable with distribution~$\nu_x$. 
From Lemma~\ref{lem:tailux} we know that 
\begin{align}\label{expom3}
\bbP(|\tilde \cU_x|\geq \ell)=\bbP(|\cU_x|\geq \ell) \leq
2(4\r_0)^{\ell-1}=\sum_{k\geq \ell}\tilde \nu(\ell)\,, \qquad \ell\geq2\,.
\end{align}
Moreover, by the definition of $\mu_x$ one has 
$\bbP(|\tilde \cU_x|=1)= \tfrac12-\r_x\leq \tfrac12$, and therefore
\begin{align}\label{expom3a}
\bbP(|\tilde \cU_x|\geq \ell)\leq
\sum_{k\geq \ell}\tilde \nu(\ell)\,, \qquad \ell\geq0\,.
\end{align}
This proves the stochastic domination $|\tilde \cU_x|\leq \tilde \cN$. 

Recall that 
$N^y(t)$
denotes the size of the whole population of the labeled branching process $X^y$ up to time $t-1$. The above domination argument implies that, for any $t\in\bbN$,  we can dominate $N^y(t)$ by the size $|\tilde \cT|$ of the total population of the Galton Watson tree~$\tilde\cT$ with offspring distribution~$\tilde \nu$, so that 
\begin{align}\label{exapom}
\bbE\left[2^{aN^y(t)}\right]\leq \bbE\left[2^{a|\tilde \cT|}\right]\,.
\end{align}
As in \eqref{expom} we have that, for any fixed $u>0$, 
$\bbE\left[u^{|\tilde \cT|}\right]=\tilde s_*(u)$, where $\tilde s_*(u)$ is the smallest solution of the equation $s=u\tilde\varphi(s)$, for
\[\tilde\varphi(s)=\bbE\left[s^{\tilde\cN}\right],\]
the generating function of $\tilde \nu$. 
We calculate
 \begin{align}\label{expmomas}
\tilde\varphi(s) =\frac12-\r_* + \frac{s}2 +2(1-4\r_0) \sum_{\ell\geq 2}
(4\r_0)^ {\ell-1}
s^\ell \leq \frac12(1+s+s\,\k(s\r_0)),
\end{align}
where $\k(t) = 16t/(1-4t)$. 
Set $u=2(1-\eta)=2^a$ for some fixed positive $\eta\leq 1/2$, and note that $\k(t)\leq \eta$ if $t\leq \eta/18$ for all $\eta\in[0,1/2]$. Thus, assuming
$s\r_0\leq \eta/18$, one has 
\[u\varphi(s) \leq (1-\eta)(1+ s(1+\eta))\leq (1-\eta)+s(1-\eta^2).\] Therefore, 
$\tilde s_*(u)\leq (1-\eta)/\eta^2$. For this value of $s$ to also satisfy the requirement $s\r_0\leq \eta/18$, we need $\r_0\leq \eta^3/(18(1-\eta))$.  
By the discussion at the end of Section~\ref{sec:coupling}, $\d_0\leq \eta^3/(72(1-\eta))$ is sufficient for this to hold. Under this assumption one has 
 \begin{align}\label{expmomasa}
 \bbE\left[2^{a|\tilde \cT|}\right]\leq \eta^{-2}.
 \end{align} 
The estimate \eqref{expmom} then holds with $2(1-\eta)=2^a$, $C_a=\eta^{-2}$, provided $\d_0 \leq  \eta^3/(72(1-\eta))$. This completes
the proof of Lemma \ref{lem:expmom}. \hfill$\square$

\begin{remark}\label{rem:opte0} 
Note that the value of~$\d_0$ required for Lemma~\ref{lem:expmom} to hold is precisely the value of~$\d_0$ that we will require in the
high-temperature condition~\eqref{Dcondition} in our main result for the nonlinear block dynamics, Theorem~\ref{th:mainthla}.  We have made no
attempt here to optimize the dependence of~$\d_0$ on our arguments, and with further work one could no doubt obtain a better,
explicit bound.  However, it is clear that our  current arguments will not be able to obtain an optimal value of~$\d_0$, matching Dobrushin-type
thresholds for rapid mixing for linear Markov chains.  For instance, in the mean field case where ${\bf J} \equiv \b/n$, the threshold
for the equilibrium phase transition is $\b=1$, and thus $\d_0<1$ would be an optimal condition.
\end{remark}

\section{Nonlinear Glauber dynamics}\label{sec:Glauber}
Recalling the definition~\eqref{pq22}, the  interaction \eqref{consepq} in the nonlinear Glauber dynamics takes the form 
\begin{equation}
\label{cw2ss}
p\circ q= \frac1n\sum_{x\in[n]}\sum_{\si,\si'}\tfrac12(p(\si)q(\si')+q(\si)p(\si')) Q_x(\cdot\mid \si,\si'),
\end{equation}
where $Q_x(\t\mid \si,\si') := \sum_{\t'}\cQ_{{\bf J},x}(\si,\si';\t,\t')$.
We write $S_t(p)$, $t\in\bbN$, for the $t$-th iteration of $p\mapsto p\circ p$. From Theorem \ref{th:conv} we know that for any fixed interaction matrix ${\bf J}$, and any initial distribution $p\in\cP(\O)$, the above dynamics converges to~$\mu_{\bf ,h}$,
where ${\bf h}$ is the unique vector of external fields such that $ \mu_{\bf J,h}$ and the initial state $p$ have the same marginals on $\si_x$, for all $x\in V$. Our main result for the nonlinear Glauber (single site) dynamics, Theorem~\ref{thm:main} in the introduction, establishes a tight bound on the rate of convergence as a function of $n=|V|$, under the same Dobrushin-type condition~\eqref{Dcondition} on the interaction as in the case of block dynamics.  We restate this theorem here for convenience.
\begin{theorem}
\label{th:mainthx}
There exist absolute constants $\d_0>0$, $c>0$ and $C>0$ such that, if \eqref{Dcondition} holds with 
constant $\d_0$, then for any $p\in\cP(\O)$ and $t\in\bbN$,
\begin{gather}\label{convergix}
\|S_t(p)-\mu_{\bf J,h}\|_{\rm TV}\leq Cne^{-c\,t},
\end{gather}
where ${\bf h}\in\bbR^n$ is the unique choice of external fields such that $p_x=(\mu_{\bf J,h})_x$ for all $x\in [n]$. 
In particular, for any $\e>0$, one has $\|S_t(p)-\mu_{\bf J,h}\|_{\rm TV}\leq \e$ as soon as $t\geq \frac{n}{c}\log n + C_1(\e)$, where $C_1(\e)=\frac{n}c\log(C/\e)$.
\end{theorem}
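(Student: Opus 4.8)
The plan is to transport the entire machinery of Section~\ref{sec:nonlinearbd} to the single‑site setting; the one genuinely new feature is that the underlying fragmentation is now a \emph{coupon–collector} process (one site touched per interaction) rather than the binary splitting of the block dynamics, and this is exactly what produces the extra factor of~$n$ in the time scale. Thus I expect the proof to follow the blueprint ``local structure of the update $\Rightarrow$ coupling with $\nu_{\bf J}$ $\Rightarrow$ backward derivation tree $\Rightarrow$ extinction of a fragmentation‑plus‑noise process,'' and I will prove the bound $\|S_t(p)-\mu_{\bf J,h}\|_{\rm TV}\le Cn\,e^{-ct/n}$, which gives the stated $\varepsilon$–mixing time $t\ge\frac nc\log n+\frac nc\log(C/\varepsilon)$.

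First I would establish the single‑site analogues of Lemmas~\ref{lem:gamuJ} and~\ref{lem:decca}. For fixed $\si,\si'$ and fixed active site $x$, the swap rule $(\si,\si')\mapsto(\si'_x\si_{[n]\setminus\{x\}},\si_x\si'_{[n]\setminus\{x\}})$ together with the acceptance probability $\a_x(\si,\si')$ has precisely the structure of a one‑site Gibbs update: a short computation (in the spirit of Lemma~\ref{lem:gamuJ}, working with $\xi_y=\si_y\eta_y$) shows that $\a_x(\si,\si')$ depends on $(\si,\si')$ only through their values at $x$ and at the neighbours of $x$ lying in the disagreement set $D(\si,\si')$, and is governed by a local weight built from the couplings $\{J_{xy}:y\in D(\si,\si')\}$. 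One then repeats the high‑temperature expansion and the monotone coupled–Glauber argument of Lemma~\ref{lem:decca}, coupling with the \emph{same} inhomogeneous Erd\H{o}s--R\'enyi measure $\nu_{\bf J}$ of~\eqref{aformapp}; the outcome is a decomposition in which, conditionally on a revealed graph $G$ (which one may take to be just the connected component of $x$ under $\nu_{\bf J}$) and the fair coins at the sites outside $V_G$, the update at $x$ depends on $(\si,\si')$ only through $\si_{V_G},\si'_{V_G}$. The component–size tail bound of Lemma~\ref{lem:tailux} concerns $\nu_{\bf J}$ alone and is reused verbatim.

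Next I would run the backward derivation‑tree argument of Section~\ref{subsec:sketch} and the proof of Theorem~\ref{th:mainarb}: reveal at each of the $2^t-1$ internal nodes the active site, its revealed component $G$, and the fair coins, and track which marginal of each leaf is still needed. This yields a single‑site ``fragmentation plus noise'' process (still denoted $\cF_t$) which, when ${\bf J}=0$, reduces to the coupon–collector fragmentation of Section~\ref{sec:fragment} — at each step the fragment containing the chosen site either sheds that site, with probability $\tfrac12$, or is unchanged — and in general perturbs this by attaching the component $V_G$ whenever the chosen site lies in a fragment and has a non‑empty neighbourhood in $G$. Writing $S_t(p)=\sum_{(\vec G,\vec B)}\widehat\nu(\vec G,\vec B)\,S_t(p\tc\vec G,\vec B)$ with $\widehat\nu$ independent of the initial data, and noting as in~\eqref{eqcq2}--\eqref{convergiarba} that on $\{\cF_t=\emptyset\}$ the conditional distribution depends on the leaves only through the (fixed) marginals, one obtains, for any $p,p'$ with equal marginals,
\begin{equation*}
\|S_t(p)-S_t(p')\|_{\rm TV}\le\bbP(\cF_t\neq\emptyset),
\end{equation*}
so the theorem reduces to a single‑site analogue of Lemma~\ref{fraglemma}, namely $\bbP(\cF_t\neq\emptyset)\le Cn\,e^{-ct/n}$ for $\d_0$ small (then take $p'=\mu_{\bf J,h}$).

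The hard part is this extinction estimate, and it is the step I expect to be the main obstacle. I would prove it by stochastic domination by independent labeled branching processes $X^y$ as in Section~\ref{sec:pflem38}, but now each $X^y$ \emph{acts only when its own label is the chosen site}, i.e.\ with probability $1/n$ per step; equivalently it is the branching process of Lemma~\ref{lem:expmom} run on a $1/n$–thinned clock, so the relevant horizon is $\Theta(n\log n)$ rather than $\Theta(\log n)$. Two inputs are needed: (i) a uniform exponential‑moment bound $\bbE[2^{aN^y(t)}]\le C_a$ for the total progeny, obtained exactly as in Lemma~\ref{lem:expmom} from the geometric offspring tail of Lemma~\ref{lem:tailux} (thinning only makes reproduction rarer, hence helps); and (ii) a bound showing that a given token is still ``in play'' after $s$ steps with probability $\le e^{-cs/n}$, since a singleton is resolved only when its own label is drawn. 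Combining these via the union bound over pairs of surviving tokens and over the branching time (as in~\eqref{expmoma1}--\eqref{expmoma3}) should yield $\bbP(|F^{(t-1)}_1|\ge 2)\le\mathrm{poly}(n,t)\,e^{-ct/n}$, and hence, after accounting for the number of nontrivial fragments, $\bbP(\cF_t\neq\emptyset)\le Cn\,e^{-ct/n}$. The delicate point — and the crux of the argument — is that the fragmentation and the noise now both operate at rate $\Theta(1/n)$, so one must verify that over the full $\Theta(n\log n)$ window the noise does not accumulate enough to block extinction: concretely, that for $\d_0$ small the effective branching number (roughly $n$ component–revealing acts, each of probability $\r_0/n$ and carrying a component of bounded expected size) stays subcritical over each ``epoch'' of $n$ steps. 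Granting this, solving $Cn\,e^{-ct/n}\le\varepsilon$ finishes the proof.
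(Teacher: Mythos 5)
Your overall architecture matches the paper's: a single-site analogue of the coupling lemma (the paper uses random \emph{star} graphs $\nu^x_{\bf J}$ rather than the component of $x$ under the full Erd\H{o}s--R\'enyi measure, but either screening set works), the backward derivation tree, the reduction $\|S_t(\vec p)-S_t(\vec q)\|_{\rm TV}\le\bbP(\cC_t\neq\emptyset)$, and then an extinction estimate for a coupon-collecting-plus-noise process. The problem is that the extinction estimate --- which you correctly identify as the crux --- is not carried out by the tools you propose. Your ingredient (i), a uniform bound $\bbE[2^{aN^y(t)}]\le C_a$ for the total progeny of the $1/n$-thinned branching process, is false for fixed $a\in(0,1)$: a single token survives $s$ steps with probability about $(1-\tfrac1{2n})^s$, and while it survives it contributes one individual per generation, so $N^y(t)\ge t$ occurs with probability $e^{-O(t/n)}$ and the exponential moment diverges in $t$ unless $a=O(1/n)$. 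Consequently the pairs-of-surviving-tokens union bound of \eqref{expmoma1}--\eqref{expmoma3}, which in the block case works because survival with two individuals forces $N\ge 2t-s-1$ and the $2^{-2at}$ factor beats everything, produces nothing useful on the $\Theta(n\log n)$ time scale. The paper instead proves the one-fragment bound (Lemma~\ref{fraglemma1x}) by embedding the fragment evolution into continuous time via a Poisson clock and dominating it by $n$ independent \emph{subcritical continuous-time branching processes} with branching rate $1/n$, using survival-probability estimates $\bbP(M^1_t>0)\le Ae^{-at/n}$ rather than progeny exponential moments; your ingredient (ii) points in this direction, but your combination of (i) and (ii) as written does not yield $\bbP(C^{(t)}_1\neq\emptyset)\le Cne^{-ct/n}$.

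The second, and more fundamental, gap is the phrase ``after accounting for the number of nontrivial fragments.'' In the block dynamics this step is a union bound over the $2^{t-1}$ fragments, which is affordable only because the per-fragment survival probability decays like $4^{-t}$. Here the per-fragment decay is only $e^{-ct/n}$, so a union bound over the exponentially many fragments is hopeless, and the asymmetric structure of $(\Psi_0,\Psi_1)$ (new nonempty fragments are created \emph{only} at growth events) must be exploited quantitatively. The paper does this in Lemma~\ref{fraglemma2x} by folding all fragments into a single ``master evolution'': each growth event adds \emph{two} copies of the revealed set $V_G$, and the resulting global process is dominated by one subcritical branching process with doubled offspring distribution $\nu_*$; an additional argument (the auxiliary process with independent clocks and the Poisson large-deviation event $\cE_t$) is needed because the two copies created at a growth event have coupled branching times in the discrete-time dynamics. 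Your proposal supplies no mechanism for controlling the proliferation of nonempty fragments or for this decoupling, so as it stands the passage from the leftmost fragment to $\bbP(\cC_t\neq\emptyset)\le Cne^{-ct/n}$ is missing.
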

The main difference with respect to Theorem \ref{th:mainthla} is the rate of exponential decay, which is $n$ times slower in this case. This reflects the intuitive fact that only one spin is exchanged at each time step whereas $\Theta(n)$ spins are exchanged in a typical transition of the nonlinear block dynamics studied in Theorem~\ref{th:mainthla}.

The proof of Theorem \ref{th:mainthx} follows a similar strategy to that of Theorem~\ref{th:mainthla}, but with some important technical differences. We begin with an analog of Lemma~\ref{lem:decca}, in which the role of Erd\H{o}s-R\'enyi graphs is now played by random star graphs centered at a specific vertex~$x$.

\subsection{Coupling 
with inhomogeneous random star graphs} 
We start with a single site version of Lemma~\ref{lem:gamuJ}.
\begin{lemma}\label{lem:gamuJx}
For any $x\in[n]$, $\si,\si'\in\O$, 
\begin{gather}\label{gammalao}
\a_x(\si,\si')=\frac1{1+
\exp\left\{(\si_x-\si'_x)\sum_{y:\,y\neq x}  J_{xy}(\si_y-\si'_y) \right\}}\,.
\end{gather}
\end{lemma}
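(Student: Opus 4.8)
The plan is to prove \eqref{gammalao} by a direct computation of the energy change under the spin swap at site~$x$, in the same spirit as the proof of Lemma~\ref{lem:gamuJ}. First I would recall that, as noted just after \eqref{pq22}, the kernel $\cQ_{\bf J}$ — and hence $\a_x$ — depends on $\mu=\mu_{\bf J,h}$ only through the interaction ${\bf J}$, so we may take ${\bf h}=0$ and write $\mu(\si)\propto e^{H(\si)}$ with $H(\si)=\tfrac12\sum_{u,v}J_{uv}\si_u\si_v$. Writing $\t:=\si'_x\si_{[n]\setminus\{x\}}$ and $\t':=\si_x\si'_{[n]\setminus\{x\}}$ for the two configurations obtained by exchanging the spins at~$x$, the definition of $\a_x$ given after~\eqref{pq22} reads
\[
\a_x(\si,\si') = \frac{\mu(\t)\mu(\t')}{\mu(\t)\mu(\t')+\mu(\si)\mu(\si')} = \frac{1}{1+\mu(\si)\mu(\si')/\bigl(\mu(\t)\mu(\t')\bigr)},
\]
so it suffices to show that $\mu(\si)\mu(\si')/(\mu(\t)\mu(\t')) = \exp\bigl\{(\si_x-\si'_x)\sum_{y\neq x}J_{xy}(\si_y-\si'_y)\bigr\}$; note that the partition functions cancel in this ratio.

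Next I would isolate the part of $H$ that depends on the spin at~$x$: using the symmetry $J_{uv}=J_{vu}$ and $\si_x^2=1$, one has $H(\eta)=\eta_x\sum_{y\neq x}J_{xy}\eta_y+R(\eta)$, where $R(\eta)$ does not involve $\eta_x$. Since $\t$ agrees with $\si$ off~$x$ and $\t'$ agrees with $\si'$ off~$x$, the remainder terms cancel, i.e.\ $R(\t)=R(\si)$ and $R(\t')=R(\si')$. Hence
\[
H(\si)+H(\si')-H(\t)-H(\t') = (\si_x-\si'_x)\sum_{y\neq x}J_{xy}\si_y + (\si'_x-\si_x)\sum_{y\neq x}J_{xy}\si'_y = (\si_x-\si'_x)\sum_{y\neq x}J_{xy}(\si_y-\si'_y),
\]
which is precisely the exponent claimed. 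Substituting back into the displayed expression for $\a_x$ yields \eqref{gammalao}.

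Since this is essentially a one-line energy computation there is no real obstacle; the only points requiring care are the bookkeeping of which configuration agrees with which off~$x$, and the observation that both the external fields and the normalizing constant $Z_{\bf J,h}$ drop out of the relevant ratio — consistent with the earlier remark that $\cQ_{\bf J}$ is insensitive to~${\bf h}$ (the $x$-field contributions $h_x(\si_x-\si'_x)$ and $h_x(\si'_x-\si_x)$ cancel in the sum $H(\si)+H(\si')-H(\t)-H(\t')$).
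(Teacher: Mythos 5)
Your proof is correct and follows essentially the same route as the paper: the paper's proof is exactly the direct computation of the energy difference $H(\si)+H(\si')-H(\t)-H(\t')=(\si_x-\si'_x)\sum_{y\neq x}J_{xy}(\si_y-\si'_y)$ under the swap at~$x$, plugged into the definition of $\a_x$. Your additional remarks (cancellation of the partition function and of the field terms, and the careful handling of the factor $\tfrac12$ via the decomposition $H(\eta)=\eta_x\sum_{y\neq x}J_{xy}\eta_y+R(\eta)$) just spell out steps the paper leaves implicit.
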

\begin{proof}
The expression \eqref{gammalao} follows directly from~\eqref{eq:Gibbs}, the definition \eqref{pq22} and the observation that
\begin{align*}
\sum_{y,z}J_{y,z}\big[(\si_y\si_z + \si'_y\si'_z)-& ([\si_x\si'_{[n]\setminus \{x\}}]_y[\si_x\si'_{[n]\setminus \{x\}}]_z +[\si'_x\si_{[n]\setminus \{x\}}]_y[\si'_x\si_{[n]\setminus \{x\}}]_z)\big] \nonumber \\&\qquad\qquad \qquad = 
(\si_x-\si'_x)\sum_{y:\,y\neq x}  J_{xy}(\si_y-\si'_y)\,. \qedhere
\end{align*}
\end{proof}
\begin{remark}
\label{rem:alpx}
Consider the random variable $\eta_x$ which takes the value $-1$ with probability $\a_x(\si,\si')$, and $+1$ with probability $1-\a_x(\si,\si')$. Then by Lemma \ref{lem:gamuJx}, its probability density can be written as $\a_x(\cdot\tc\si,\si')$, where  
\begin{gather}\label{gammalaoa}
\a_x(\eta_x\tc\si,\si')  \propto
\exp\Bigl\{\eta_x\sum_{y:\,y\neq x} \tilde J_{xy} \Bigr\}\,,\qquad \eta_x\in\{-1,+1\}\,,
\end{gather}
and $\tilde J_{xy}$ is defined as in \eqref{jtilde}. This also shows that $\a_x(\cdot\tc\si,\si')$
coincides with the distribution of $\eta_x$ obtained by conditioning $\g(\cdot\tc\si,\si')$ on the event that $\{\eta_y=1,\;\forall \,y\neq x\}$, where $\g(\cdot\tc\si,\si')$ is the distribution from 
Lemma \ref{lem:gamuJ}. Note also that, if $\si_x=\si'_x$, then $\tilde J_{xy}= 0$ for all $y\neq x$, and therefore $\a_x(\cdot\tc\si,\si') = {\rm Be}(1/2)$ is the Bernoulli probability measure on $\{-1,+1\}$ with parameter $1/2$.
\end{remark}

Fix $x\in[n]$, and let $\cG_x$ be the set of all subgraphs of the star graph $S_x=(V,E_x)$ with vertex set $V=[n]$, and edge set $E_x:=\{\{x,y\},\;y\in[n]\setminus\{x\}\}$. We view $G\in\cG_x$ as a collection of edges, i.e., a subset of $E_x$,  with no isolated vertices, and write $\cP(\cG_x)$ for the set of  probability measures over $\cG_x$.  Note that any $G\in\cG_x$ is always connected, but can be the empty graph (with no vertices). We use $V_G,E_G$ for the vertex and edge sets of $G\in\cG_x$, respectively.    The following is the single site version of Lemma \ref{lem:decca}.
\begin{lemma}\label{lem:deccax}
Fix $x\in [n]$. Let $\nu^x_{\bf J}$ be the inhomogeneous random star measure associated with the weights $\l_{xy}=e^{4|J_{xy}|}-1$, i.e., 
\begin{gather}\label{aformappx}
\nu^x_{\bf J}(G) \propto \prod_{\{x,y\}\in E_G} (e^{4|J_{xy}|}-1)\,,\qquad G\in\cG_x\,.
\end{gather}
Then
\begin{align}
\label{nug1x}
\a_x(\cdot \tc\si,\si') = \sum_{G\in\cG_x}\nu^x_{\bf J}(G)\, \mu^x_{G}(\cdot\tc \si_{V_G},\si'_{V_G})
\,,
\end{align}
where, for any $G\in\cG_x$, $\mu^x_{G}(\cdot \tc \si_{V_G},\si'_{V_G})$ is a probability measure on $\{-1,+1\}$ that depends on $\si,\si'$ only through the spins $\si_{V_G},\si'_{V_G}$ and such that if $G=\emptyset$, then $\mu^x_{G}(\cdot \tc \si_{V_G},\si'_{V_G})={\rm Be}(\tfrac12)$ is the Bernoulli probability measure on $\{-1,+1\}$, with parameter $1/2$. 
\end{lemma}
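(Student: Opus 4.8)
The plan is to mirror, in this single-spin setting, the proof of Lemma~\ref{lem:decca}, which in fact simplifies considerably here. The starting point is the representation \eqref{gammalaoa} from Remark~\ref{rem:alpx}, which exhibits $\a_x(\cdot\tc\si,\si')$ as a Gibbs measure on the \emph{single} spin $\eta_x\in\{-1,+1\}$ of high-temperature form $\a_x(\eta_x\tc\si,\si')\propto\prod_{y\neq x}\exp\{\eta_x\tilde J_{xy}\}$, with the coefficients $\tilde J_{xy}$ of \eqref{jtilde}. First I would run the polymer expansion exactly as in Lemma~\ref{lem:decca}: write $\exp\{\eta_x\tilde J_{xy}\}=e^{-|\tilde J_{xy}|}\bigl(1+\delta_{xy}(\eta_x)\bigr)$ with $\delta_{xy}(\eta_x):=\exp\{\eta_x\tilde J_{xy}+|\tilde J_{xy}|\}-1\geq 0$, drop the $\eta_x$-independent prefactor $\prod_y e^{-|\tilde J_{xy}|}$ in the normalization, and expand $\prod_{y\neq x}(1+\delta_{xy}(\eta_x))$ over subsets of the star edges. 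This gives $\a_x(\cdot\tc\si,\si')=\sum_{G\in\cG_x}p_{{\bf J},\si,\si'}(G)\,\hat\mu^x_G(\cdot)$ with $p_{{\bf J},\si,\si'}(G)\propto w(G):=\sum_{\eta_x}\prod_{\{x,y\}\in E_G}\delta_{xy}(\eta_x)$ and $\hat\mu^x_G(\eta_x):=w(G)^{-1}\prod_{\{x,y\}\in E_G}\delta_{xy}(\eta_x)$. Since every $G\in\cG_x$ is connected, there is no bookkeeping over connected components to carry out; one simply notes that $\hat\mu^x_G$ depends on $(\si,\si')$ only through $(\si_{V_G},\si'_{V_G})$ (because $\delta_{xy}$ depends only on $\tilde J_{xy}$, hence on $\si_x,\si_y,\si'_x,\si'_y$), and that $w(\emptyset)=2$, so $\hat\mu^x_\emptyset={\rm Be}(\tfrac12)$.

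The measure $p_{{\bf J},\si,\si'}$ still depends on $(\si,\si')$, so the second step is the coupling argument of Lemma~\ref{lem:decca}: construct a coupling $\pi$ of $\nu^x_{\bf J}$ and $p_{{\bf J},\si,\si'}$ on $\cG_x\times\cG_x$ supported on pairs $G\subseteq H$, as the stationary law of the coupled Glauber-type chains on $\cG_x$ driven by a common uniform variable. The monotonicity making this work is $p(G,e,+)\leq \bar p(H,e,+)=p_{xy}$ for $G\subseteq H$, $e=\{x,y\}$: the left-hand side (the single-spin analogue of the ratio \eqref{mono2}) is increasing in each $\delta_{xy}(\eta_x)$, and $\delta_{xy}(\eta_x)\leq e^{2|\tilde J_{xy}|}-1\leq e^{4|J_{xy}|}-1$, the value at which the ratio equals $\tfrac{e^{4|J_{xy}|}-1}{e^{4|J_{xy}|}}=p_{xy}$. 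As in Lemma~\ref{lem:decca}, one also checks that $\pi(\cdot\tc H)$ depends on $(\si,\si')$ only through $(\si_{V_H},\si'_{V_H})$, since the restricted chain uses only the coefficients $\tilde J_{xy}$ with $\{x,y\}\subseteq E_H$; the product-over-components property needed there is vacuous here as $H$ is connected.

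To conclude, I would set $\mu^x_H(\cdot\tc\si_{V_H},\si'_{V_H}):=\sum_{G\subseteq H}\pi(G\tc H)\,\hat\mu^x_G(\cdot)$ and use the identity $p_{{\bf J},\si,\si'}(G)=\sum_{H:\,G\subseteq H}\nu^x_{\bf J}(H)\,\pi(G\tc H)$ to rewrite the expansion from the first step as the claimed \eqref{nug1x}. The dependence of $\mu^x_H$ on $(\si,\si')$ only through $(\si_{V_H},\si'_{V_H})$ is inherited from the corresponding properties of $\pi(\cdot\tc H)$ and of $\hat\mu^x_G$ (since $V_G\subseteq V_H$), and for $H=\emptyset$ the only $G\subseteq H$ is $\emptyset$, so $\mu^x_\emptyset=\hat\mu^x_\emptyset={\rm Be}(\tfrac12)$.

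I do not expect a genuine obstacle: the only new point relative to Lemma~\ref{lem:decca} is checking that all the structural bookkeeping collapses in the single-spin / star-graph setting, and the monotone-coupling estimate is essentially unchanged. If one prefers to avoid rebuilding the coupling, an alternative route is to deduce the statement from Lemma~\ref{lem:decca} by conditioning $\g(\cdot\tc\si,\si')$ on $\{\eta_y=1,\ \forall y\neq x\}$ as in Remark~\ref{rem:alpx}, but the direct expansion above seems cleaner and more self-contained.
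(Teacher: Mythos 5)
Your proposal is correct and is essentially the paper's intended argument: the paper omits the proof of this lemma, stating only that it is ``similar to that of Lemma~\ref{lem:decca}'', and your write-up supplies exactly that adaptation (single-spin high-temperature expansion with $\d_{xy}(\eta_x)=\exp\{\eta_x\tilde J_{xy}+|\tilde J_{xy}|\}-1$, the monotone Glauber-type coupling with the star measure via $\d_{xy}(\eta_x)\le e^{4|J_{xy}|}-1$ and $\bar p(H,e,+)=1-e^{-4|J_{xy}|}$, and the mixture $\mu^x_H=\sum_{G\subseteq H}\pi(G\tc H)\,\hat\mu^x_G$), with all the required properties checked, including $\mu^x_\emptyset={\rm Be}(\tfrac12)$. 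Only your final aside is shaky---conditioning the decomposition of Lemma~\ref{lem:decca} on $\{\eta_y=1,\ \forall y\neq x\}$ would reweight the mixture by $(\si,\si')$-dependent factors and so would not directly yield weights independent of $(\si,\si')$---but you correctly do not rely on it.
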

\begin{proof}
The proof is similar to that of Lemma  \ref{lem:decca} and is omitted.
\end{proof}

\subsection{Coupon collecting with noise}
In the case of single site updates, fragmentation involves hitting each site at least once.  The analog of fragmentation plus
noise can therefore be viewed as a coupon collecting process, again perturbed by a local growth described by simpler star graphs.

Given a set $A\subseteq [n]$, a uniformly random vertex $x\in[n]$, a Bernoulli variable $B\in\{-1,+1\}$ with parameter $1/2$ and a random graph $G\in\cG_x$ with distribution $\nu^x_{\bf J}$, we define the random sets 
  \begin{align}\label{psi01}
\Psi_0(A) = \Psi_1(A) = \emptyset \,,\qquad \text{if\;\;} |A|\leq 1\,,
\end{align}
and, if $|A|\geq 2$, 
  \begin{align}\label{psi011}
(\Psi_0(A),\Psi_1(A)) = \begin{cases}
(A,\emptyset) &\text{if\;}  x\notin A\\
(A,\emptyset) &\text{if\;}  x\in A\,,\; G=\emptyset\;\text{and\;} B=-1\\(A\setminus\{x\},\emptyset) &\text{if\;}  x\in A\,,\;G=\emptyset\;\text{and\;} B=+1\\
(A\cup V_G,V_G) & \text{if\;} x\in A\; \text{and\;} G\neq\emptyset
\end{cases}
\end{align}

\begin{definition}\label{def:fragnoisex}
The {\em coupon collecting plus noise} process $\cC_t$, $t=0,1,\dots$ is the random process defined as follows. For each $t\in\bbN$, $\cC_t$ consists of $2^t$ labeled fragments, i.e., (possibly empty) subsets $C^{(t)}_1,\dots,C^{(t)}_{2^t}$, $C^{(t)}_i\subseteq [n]$, obtained by repeated application of the following rule.  
At time zero we have $\cC_0=\{[n]\}$, i.e., $C^{(0)}_1=[n]$. At time $t\in\bbN$, 
if $\cC_{t-1}=(C^{(t-1)}_1,\dots,C^{(t-1)}_{2^{t-1}})$, then for each $i$ independently, the fragment $C^{(t-1)}_i$ is replaced by $(\Psi_0(C^{(t-1)}_i),\Psi_1(C^{(t-1)}_i))$ where $\Psi_0,\Psi_1$ are the random maps defined by \eqref{psi01}-\eqref{psi011}, so that 
\begin{align}\label{phragx}
\cC_{t}&=(C^{(t)}_1,\dots,C^{(t)}_{2^t})\nonumber \\&=(\Psi_0(C^{(t-1)}_1),\Psi_1(C^{(t-1)}_1),\Psi_0(C^{(t-1)}_2),\Psi_1(C^{(t-1)}_2),\dots,\Psi_0(C^{(t-1)}_{2^{t-1}}),\Psi_1(C^{(t-1)}_{2^{t-1}}))\,. 
\end{align}
The process is said to\/~{\rm die out} if there is a time $t$ such that $C^{(t)}_i=\emptyset$ for all $i=1,\dots,2^t$. In the latter case, with slight abuse of notation, we write $\cC_t=\emptyset$. 
\end{definition}
\begin{remark}\label{rem:coupx}
In the non-interacting case ${\bf J}=0$ one has always $G=\emptyset$
and thus $\Psi_1(A)=\emptyset$, so that the process $\cC_t$ is a monotone decreasing sequence obtained by iterations of the map $A\mapsto \Phi_0(A)$, where at each step one picks a uniformly random vertex $x\in[n]$ and if $x\in A$ the vertex $x$ is removed from $A$ with probability $1/2$. This yields a lazy version of the standard coupon collecting process, and therefore the union bound shows that
\begin{align}\label{dkzerox}
\bbP(\cC_t\neq\emptyset)\leq n\,\left(1-\tfrac1{2n}\right)^{t}\leq n \,e^{-t/(2n)}\,,\qquad t=1,2,\dots
\end{align}
In the interacting case, the event $G\neq\emptyset$ causes the splitting of $A$ into two nontrivial sets $(\Psi_0(A),\Psi_1(A))$. This local growth can slow down the convergence to the  absorbing state $\emptyset$. However, we will show that if the growth is sufficiently subcritical (ensured by the Dobrushin condition~\eqref{Dcondition}) then a bound of the form \eqref{dkzerox} continues to hold.  
\end{remark}

Despite the formal similarities between Definition \ref{def:fragnoisex} and Definition \ref{def:fragnoise}, the two processes are quite different. Note in particular the strong asymmetry between the different fragments in the case of Definition \ref{def:fragnoisex}.
Let us first consider the evolution of the leftmost fragment $C^{(t)}_1$, $t=0,1,\dots$.

\begin{lemma}\label{fraglemma1x}
There exist absolute constants $\d_0>0$, $a>0$ and $C>0$ such that if  
\eqref{Dcondition} holds with constant~$\d_0$ then
\begin{align}\label{dkz1x}
\bbP(C^{(t)}_1\neq\emptyset)\leq Cne^{-at/n}\,,\qquad t=1,2,\dots
\end{align}
\end{lemma}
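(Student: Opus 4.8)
The plan is to bound the evolution of the single fragment $C^{(t)}_1$ by a labeled branching process, exactly as in the proof of Lemma~\ref{fraglemma}, but tailored to the asymmetric single-site dynamics. First I would observe that, by the definition of the maps $\Psi_0,\Psi_1$ in \eqref{psi01}--\eqref{psi011}, the leftmost fragment $C^{(t)}_1$ is always obtained by applying $\Psi_0$ repeatedly (the ``left'' child at every step). Thus $C^{(t)}_1$ evolves under iterations of the random map $A\mapsto\Psi_0(A)$: at each step a uniformly random vertex $x\in[n]$ is chosen, and either (i) $x\notin A$, in which case $A$ is unchanged; or (ii) $x\in A$ and the associated star graph $G$ is empty, in which case $x$ is removed from $A$ with probability $1/2$ (via the Bernoulli $B$); or (iii) $x\in A$ and $G\neq\emptyset$, in which case $A$ grows to $A\cup V_G$. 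The key point is that case (iii) happens only when the randomly chosen $x$ lies in $A$ (probability $|A|/n$) \emph{and} the star graph at $x$ is nonempty (probability $\rho_x\le\rho_0\le 4\delta_0$), so growth is rare and controlled by the Dobrushin parameter.

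The main step is to set up a stochastic domination of $|C^{(t)}_1|$ by (a time-rescaled version of) a subcritical labeled branching process. Concretely, I would track each site currently in the fragment as a particle; in one ``effective'' step (when the chosen vertex hits the fragment), a particle either dies with probability $\tfrac12(1-\rho_x)$, survives alone with probability $\tfrac12(1-\rho_x)$, or is replaced by the connected star component $\cU_x$ with probability $\rho_x$, whose size has the exponential tail $\bbP(|\cU_x|\ge\ell)\le 2(4\rho_0)^{\ell-1}$ from Lemma~\ref{lem:tailux}. This is essentially the offspring distribution \eqref{offspringx}--\eqref{offspringa} used for the block dynamics, so I can reuse Lemma~\ref{lem:expmom}: the total population of the associated Galton--Watson tree has a uniformly bounded exponential moment $\bbE[2^{a\,|\tilde\cT|}]\le C_a$ provided $\delta_0$ is small enough (with $a$ as close to $1$ as we like). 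The difference from the block case is the ``laziness'': a given particle is touched only when its vertex is the uniformly chosen one, which happens at rate $1/n$, so it takes $\Theta(n)$ real steps to realize one branching step. This is precisely the source of the $e^{-at/n}$ rate rather than $e^{-at}$.

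To make the laziness rigorous I would argue as follows. The event $\{C^{(t)}_1\neq\emptyset\}$ is monotone: if the fragment is empty at some time it stays empty, so $\{C^{(t)}_1\ne\emptyset\}$ implies $\{C^{(s)}_1\ne\emptyset\}$ for all $s\le t$, and in particular the fragment has been continuously nonempty and thus ``alive'' throughout $[0,t]$. I would then couple $\{C^{(s)}_1\}$ with a family of independent labeled branching processes $\{X^y\}_{y\in[n]}$ started from singletons (as in \eqref{F1t-1}) so that $|C^{(s)}_1|\le\sum_y|X^y_{m(s)}|$ where $m(s)$ is the number of ``effective'' steps that have occurred among the first $s$ steps. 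Since each step is effective with probability $|C^{(s-1)}_1|/n\ge 1/n$ (as long as the fragment is nonempty), a standard Chernoff/Poissonization argument shows that with probability $1-e^{-\Theta(t/n)}$ we have $m(t)\ge c_0 t/n$ for some constant $c_0>0$. On this event, $\{C^{(t)}_1\ne\emptyset\}$ forces one of the $n$ branching processes $X^y$ to have survived for at least $c_0 t/n$ generations. By Lemma~\ref{lem:expmom} and Markov's inequality, a single $X^y$ survives $k$ generations with probability at most $C_a\,2^{-ak}$ (since survival for $k$ generations implies total population $\ge k$). A union bound over the $n$ choices of $y$, combined with the $e^{-\Theta(t/n)}$ failure probability of the Chernoff estimate, yields $\bbP(C^{(t)}_1\ne\emptyset)\le Cn\,e^{-at/n}$ after renaming constants, which is \eqref{dkz1x}.

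\textbf{Main obstacle.} The delicate point is the bookkeeping that converts ``$k$ branching steps take roughly $nk$ real steps'' into a clean tail bound: one must handle the fact that the effective-step rate is $|C^{(s-1)}_1|/n$, which is at least $1/n$ while the fragment is alive but could be much larger, and one must ensure that the branching-process domination and the coupon-collecting time estimate are coupled on the \emph{same} probability space without circularity. I expect this to be the part requiring the most care, though it is conceptually routine; the genuinely substantive input — that the local growth is subcritical with uniformly bounded exponential moments — is already supplied by Lemmas~\ref{lem:tailux} and~\ref{lem:expmom}, which apply here with only cosmetic changes (star graphs in place of Erd\H{o}s--R\'enyi components, but the tail bound $\bbP(|\cU_x|\ge\ell)\le 2(4\rho_0)^{\ell-1}$ is identical since a star component and a general component at a vertex have the same stochastic structure under these weights).
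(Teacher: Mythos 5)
Your setup (the leftmost fragment evolves by iterating $\Psi_0$, growth events occur only when the chosen vertex hits the fragment and its star is nonempty, and the local growth is subcritical with the exponential tail of Lemma~\ref{lem:tailux}) matches the paper. But the step that converts the per-site laziness into the rate $e^{-at/n}$ — the step you yourself flag as delicate — does not work as written. Your claimed domination $|C^{(s)}_1|\le\sum_y|X^y_{m(s)}|$, with $m(s)$ the \emph{global} number of effective steps and $X^y$ the \emph{synchronous}-generation branching processes of the block-dynamics section, cannot be realized by any coupling: in the single-site dynamics the updates are concentrated on whichever sites the uniform clock happens to select, so a site (or lineage) that is never selected stays alive for free, while generation $m(s)$ of $X^y$ requires that every individual in that line has been updated $m(s)$ times; there is no randomness in the fragment process to drive those updates, and using independent randomness lets $X^y$ die while the untouched site survives, destroying the domination. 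Likewise the inference ``$m(t)\ge c_0t/n$ and $C^{(t)}_1\ne\emptyset$ force some $X^y$ to survive $c_0t/n$ generations'' is false: the bulk of the effective steps occur early, when the fragment is large, and are spread over many short-lived lineages; the site actually alive at time $t$ may belong to a lineage that was hit very few times (indeed, never). A global Chernoff bound on effective steps therefore says nothing about the genealogy of the survivor, which is exactly where the $e^{-t/n}$ factor has to come from.

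The paper resolves this by Poissonization rather than by counting effective steps: it passes to a continuous-time version $\widetilde W_t$ in which each site carries an independent rate-$1/n$ clock, dominates $|\widetilde W_t|$ by a sum of $n$ i.i.d.\ \emph{continuous-time} subcritical branching processes $M^x_t$ with offspring law built from $\bar\mu$ (so that laziness and subcriticality are combined per particle, not globally), uses the standard estimate $\bbP(M^1_t>0)\le Ae^{-at/n}$, and transfers back to discrete time via $\bbP(T_t\le t/2)\le e^{-ct}$. Your discrete-time strategy could in principle be repaired by doing the accounting \emph{per lineage} — bounding, for each $y$, the probability that the descendants of $y$ are alive at time $t$ by splitting on whether that lineage was hit at least $c_0t/n$ times (each hit consumes one individual of the embedded genealogical tree, so many hits are ruled out by the total-progeny exponential moment, while few hits despite continual survival are ruled out by a martingale/Chernoff bound using that an alive lineage is hit with probability at least $1/n$ per step) — but that is a different argument from the one you wrote, and the global effective-step count must be abandoned.
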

\begin{proof}
The  leftmost fragment $C^{(t)}_1=\Phi_0\circ\cdots\circ\Phi_0([n])$ is obtained by iterating $t$ times the map $\Phi_0$. It may be represented by the following 
labeled branching process.
Consider random variables $W:=\{W_t,\,t=0,1,\dots\}$, where $W_0=[n]$ and for each $t$, $W_t\subseteq [n]$ is updated to $W_{t+1}\subseteq [n]$ by the rule
\begin{itemize}
\item pick $x\in[n]$ u.a.r.
\item if  $x\notin W_t$ then set $W_{t+1}=W_t$
\item if  $x\in W_t$ then let $\cV_x$ denote the neighborhood of $x$ in $G$, i.e., the random variable with distribution as in~\eqref{offspringbx}: if $\cV_x\neq\emptyset$, set $W_{t+1}=W_t\cup \cV_x$; if instead $\cV_x=\emptyset$, set 
\[
W_{t+1}=\begin{cases}W_t\setminus\{x\}
& \text{with prob.\;}\tfrac12\\
W_t
& \text{with prob.\;}\tfrac12
\end{cases}
\]
\end{itemize}
From our definitions we see that $C^{(t)}_1$ has the same distribution as~$W_t$.

Next, consider a continuous time version of the process $W_t$. Namely, let $T_1,T_2,\dots$ denote the arrival times of the Poisson point process with intensity  $1$ on $[0,\infty)$
and call $\widetilde W_t$, $t\geq 0$, the process obtained by repeating the steps above at each arrival time $T_j$, so that $W_t = \widetilde W_{T_t}$ for every $t\in\bbN$. 

From standard properties of the Poisson process, the random process $\widetilde W_t$ can be equivalently obtained by attaching to each $x\in [n]$ an independent Poisson process with intensity $1/n$, and performing updates at~$x$ independently at the arrival times of the process at~$x$.  Neglecting the overlap between different branches, and using the domination argument in the proof of Lemma \ref{lem:tailux}, one can stochastically dominate the cardinality of $\widetilde W_t$ by
\begin{align}\label{dkz2x}
|\widetilde W_t|\leq \sum_{x\in[n]}M^x_t,
\end{align}
where $M^x_t$, $x\in[n]$ are i.i.d.\ copies of the continuous time branching process obtained by setting $M^x_0=1$ and then letting each branch evolve independently with branching times given by exponential random variables with parameter $1/n$.  In this process, each individual gives birth to $\ell$ individuals with probability $\mu_*(\ell)$ given 
by
\begin{align}\label{eq:mustar}
\mu_*(\ell)=\begin{cases}\tfrac12\bar\mu(0)
& \ell=0\\
\tfrac12\bar\mu(1)
& \ell=1\\
\bar\mu(\ell)
& \ell\geq 2
\end{cases}
\end{align}
where $\bar\mu$ is defined as in~\eqref{mubar} with the same value of~$\r_0$. 
(See~\cite{athreya1972branching} for background on continuous time branching processes.)
Since $M^1_t$ is subcritical as soon as $\d_0$ is sufficiently small, and $\bar\mu$ has exponential tails, standard
estimates on subcritical continuous time branching processes
imply that  $\bbP(M^1_t>0)\leq Ae^{-at/n}$ for some constants $a,A>0$. 
It follows that 
\begin{align}\label{dkz3x}
\bbP(|\widetilde W_t|>0) \leq n\,\bbP(M^1_t>0)\leq nAe^{-at/n}\,.
\end{align}
To conclude the proof, recall  that $W_t = \widetilde W_{T_t}$ and that $T_t$ is the sum of $t$ i.i.d.\ exponentials of parameter $1$, so that 
\begin{align}\label{dkz4x}
\bbP(T_t\leq t/2) \leq e^{-c t}\,,
\end{align}
for some constant $c>0$. Therefore, for all $t\in\bbN$,
\begin{align}\label{dkz5x}
\bbP(C^{(t)}_1\neq\emptyset)\leq \bbP(|\widetilde W_{T_t}|>0)
\leq \bbP(|\widetilde W_{t/2}|>0) + e^{-c t}\leq n\,Ae^{-at/(2n)}  +e^{-c t} \,,
\end{align}
completing the proof of the lemma.
 \end{proof}%
Next, we extend the idea of the proof of Lemma \ref{fraglemma1x} to obtain the following stronger estimate.

 \begin{lemma}\label{fraglemma2x}
There exist absolute constants $\d_0>0$, $a>0$ and $C>0$ such that if  
\eqref{Dcondition} holds with constant~$\d_0$ then
\begin{align}\label{dkz1xx}
\bbP(\cC_t\neq\emptyset)\leq Cne^{-at/n}\,,\qquad t=1,2,\dots
\end{align}
\end{lemma}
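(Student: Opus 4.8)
The plan is to extend the single-fragment estimate of Lemma~\ref{fraglemma1x} to all $2^t$ fragments of the coupon collecting plus noise process. The crucial structural feature, visible in~\eqref{psi011}, is that only the leftmost branch can \emph{shrink} by removing a vertex; every time a nontrivial split occurs (the case $x\in A$, $G\neq\emptyset$), the new right-hand fragment $\Psi_1(A)=V_G$ is \emph{created fresh} and from that point on evolves like a new copy of the process started from the connected set $V_G$. Thus the process can be viewed as a branching structure of ``sub-processes'': the original one starting from $[n]$, together with a random collection of sub-processes spawned along the way, each starting from a random connected graph $V_G$ whose size has the exponential tail of Lemma~\ref{lem:tailux}.

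The key steps, in order, would be as follows. \textbf{Step 1.} Observe that $\cC_t\neq\emptyset$ iff at least one of the sub-processes described above is still alive at time~$t$. By a union bound over the (random) set of sub-processes, and the fact that each of them, once born at some time $s\le t$ from a set of size $m$, dies within an additional time that is controlled exactly as in Lemma~\ref{fraglemma1x} (the only difference being the initial size $m$ instead of $n$), it suffices to (a) bound the expected number of sub-processes ever created, weighted by an exponential moment of their birth size, and (b) bound the survival probability of a single sub-process started from a set of size~$m$ at time~$0$ by something like $C\,m\,e^{-at/n}$, uniformly in~$m\le n$. \textbf{Step 2.} For (b), reuse verbatim the continuous-time embedding and the domination $|\widetilde W_t|\le \sum_{x}M^x_t$ from the proof of Lemma~\ref{fraglemma1x}, noting that starting from $m$ vertices gives $|\widetilde W_t|\le\sum$ over an $m$-element index set, hence survival probability $\le m\,\bbP(M^1_t>0)\le mAe^{-at/n}$; combine with the time-change tail~\eqref{dkz4x}. \textbf{Step 3.} For (a), bound the expected total progeny of sub-processes: a sub-process spawned from $V_G$ itself spawns further sub-processes, but each spawning event requires a nontrivial local growth $G\neq\emptyset$ with $|V_G|\ge 2$, which under~\eqref{Dcondition} with small $\d_0$ happens at a rate making the associated ``meta'' branching process subcritical, with the size of each newly created fragment dominated by $\cN$ from~\eqref{mubar}/\eqref{nubar}. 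One then gets a geometric-type bound: the expected number of sub-processes ever created is $O(1)$, and the expected sum of their sizes is $O(n)$ (the $n$ coming only from the root fragment $[n]$; all spawned fragments contribute $O(1)$ in expectation). \textbf{Step 4.} Assemble: condition on the (random, a.s.\ finite) collection of sub-processes, apply the single-sub-process bound from Step~2 to each, take expectations, and use Step~3 to see the total is $\le Cne^{-at/n}$ after absorbing constants and slightly shrinking~$a$; finally a union bound over the labels is already built in.

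The main obstacle I expect is making Step~3 rigorous, i.e.\ setting up the ``process of sub-processes'' cleanly and proving it is subcritical with well-controlled total mass. One has to be careful that a sub-process born from a large fragment $V_G$ can itself immediately split, so the bookkeeping must track a genuine multitype branching structure; the payoff is that each splitting event consumes an ``$e^{4|J|}-1$'' factor, so under~\eqref{Dcondition} the expected number of splittings along any lineage is $O(\d_0)\ll 1$. A convenient way to organize this is to prove a single inequality by (strong) induction on $t$: letting $f(m,t):=\sup\{\bbP(\text{process started from a connected set of size }m\text{ alive at time }t)\}$ for the sub-process dynamics and $g(n,t)$ for the full process started from $[n]$, establish a recursion $g(n,t)\le (\text{no-split survival}) + \sum_{s,m}\bbP(\text{first split at time }s\text{ into size }m)\,g(n,s)\,f(m,t-s)$, then plug in the exponential-tail bound on $m$ and the single-process estimate to close the induction with the claimed bound $Cne^{-at/n}$. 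The alternative, fully probabilistic coupling argument sketched above (dominate the whole picture by a forest of independent continuous-time branching processes, one rooted at each $x\in[n]$, exactly as in~\eqref{dkz2x}) is probably cleaner: since in $\widetilde W_t$ every vertex that ever enters a fragment is dominated by its own independent copy $M^x_t$, one already has $\sum_{i}|C^{(t)}_i|\le\sum_{x\in[n]}(\text{total alive progeny of }M^x\text{ at time }t)$, and then $\bbP(\cC_t\neq\emptyset)\le\bbP(\exists x:\ M^x\text{ has an alive individual at time }t)\le n\,\bbP(M^1_t>0)\le nAe^{-at/n}$, followed by the time-change estimate~\eqref{dkz4x} exactly as in~\eqref{dkz5x}. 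I would present the continuous-time domination version as the main line of proof and only invoke the branching-process bookkeeping to the extent needed to justify that the overlaps between branches can indeed be neglected in the stochastic domination, which is the one genuinely delicate point.
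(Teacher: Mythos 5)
Your main line of proof has a genuine gap. The domination of $\sum_i|C^{(t)}_i|$ by $\sum_{x\in[n]}(\hbox{total alive progeny of }M^x_t)$, with one copy of the branching process $M^x$ from Lemma~\ref{fraglemma1x} per vertex $x\in[n]$, is not valid for the full process $\cC_t$. The reason is the duplication built into \eqref{psi011}: at every nontrivial split the set $V_G$ is placed into \emph{both} children, $\Psi_0(A)=A\cup V_G$ and $\Psi_1(A)=V_G$, and from then on the two copies evolve with independent randomness. Hence the same vertex of $[n]$ can seed many independently evolving copies of local growth, and the number of independent chances for the process to stay alive increases with each split; a forest with a single lineage per original vertex undercounts these chances, so the bound $\bbP(\cC_t\neq\emptyset)\le n\,\bbP(M^1_t>0)$ does not follow. (Overlaps between branches, which you single out as the delicate point, are harmless for an upper bound; the duplication is what must be handled.) The paper's proof addresses exactly this: it merges all fragments into a single ``master evolution'' in which every splitting event adds \emph{two} copies of $\cV_x$, and dominates it by a branching process with the doubled offspring distribution $\nu_*$ of \eqref{eq:nustar}, still subcritical for small $\d_0$. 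A second point your sketch misses is that, since all fragments are updated at the same discrete times, the duplicated groups share their branching times; comparing this coupled process with the fully independent one requires an extra argument (the event that all $2^t$ Poisson clocks have at least $bt$ arrivals by time $t/2$, see \eqref{dkza8x}), which cannot be waved away.

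Your first plan (Steps 1--4, the ``process of sub-processes'') is closer in spirit to a workable argument, but as sketched it is also incomplete: sub-processes born at times close to $t$ have had essentially no time to die, so the assembly in Step~4 cannot simply multiply a survival bound $f(m,t-s)$ by an expected number of sub-processes, and the proposed recursion with the factor $g(n,s)\,f(m,t-s)$ does not correspond to a correct decomposition (after the first split \emph{both} children must die by time $t$, and the left child is not a copy of the original process run for time $s$). If you want to salvage your approach, the cleanest repair is essentially the paper's construction: define the master evolution in which each split contributes two copies of the drawn neighborhood, dominate it by the doubled-offspring branching process, and then rerun your Lemma~\ref{fraglemma1x} continuous-time argument for that process, together with the Poisson large-deviation step needed to decouple the shared branching times.
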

\begin{proof}
The idea is to add all individuals from the other fragments to the evolution of the first fragment, which we interpret as the ``master evolution"
to which all further fragments are added, and to dominate this global process by a single sub-critical branching process.
Indeed, notice that every time there is a branching event in the 
coupon collecting plus noise process, at a fragment $C_i^{(j)}$, say at vertex $x$, if a nonempty neighborhood $\cV_x$ of $x$ is drawn, then the set $\cV_x$ is added to both $\Phi_0(C_i^{(j)})$ and $\Phi_1(C_i^{(j)})$.  Thus, recursively, we can add all of them to the master evolution and obtain a global process $\widetilde\cC = \{\cC_t, \,t=0,1,\dots\}$ that evolves like the first fragment, with the rule that every time a $\cV_x\neq\emptyset$ is drawn then {\it two\/} copies of that set are added to the current configuration, each of which continues its evolution independently according to the same branching rules as above. In particular, the same argument as in the proof of Lemma \ref{fraglemma1x} shows that 
\begin{align}\label{dkz7x}
\bbP(\cC_t\neq\emptyset)\leq \bbP(Z_{T_t}\neq\emptyset)\,,
\end{align}
where $Z_t$, $t\geq 0$, is the continuous time process defined by $Z_{t}=\sum_{i=1}^nZ^i_t$, where $Z^i_t$ are i.i.d.\ copies of a process $Z^1_t$ defined as follows. 
At time $0$ there is one individual, and each individual that is alive at any time $s\geq 0$, independently of all others, waits an exponential time with parameter $1/n$ and then branches into $0,1$, or $2j$ individuals, with probabilities given, respectively by, $\mu_*(0),\mu_*(1)$, and $\mu_*(j)$, $j\geq 2$.    If the branching produces $2j$ individuals, then these are divided into two groups $(y_1,\dots,y_j)$ and $(y'_1,\dots,y'_j)$, which continue their evolutions independently except that the branching times of the two groups are coupled so that they are identical. (This slightly strange coupling is a consequence of the discrete nature of time in the original process $\cC_t$.) 

Let us first consider the simpler situation where we do not have the coupling of branching times every time a  branching produces $2j$ individuals. 
Indeed, let us call $\widetilde Z^1_t$ the process evolving as above with the feature that the two groups  $(y_1,\dots,y_j)$ and $(y'_1,\dots,y'_j)$ evolve in a fully independent way following the above described rules. Thus, $\widetilde Z^1_t$ is a pure continuous-time branching process with offspring distribution
\begin{align}\label{eq:nustar}
\nu_*(\ell)=\begin{cases}\tfrac12\bar\mu(0)
& \ell=0\\
\tfrac12\bar\mu(1)
& \ell=1\\
\bar\mu(\ell/2)
& \ell=4,6,\dots
\end{cases}
\end{align}
The process $\widetilde Z^1_t$ is subcritical as soon as $\d_0$ is sufficiently small, and since $\bar\mu$ has exponential tails,  
for some constants $a,A>0$ one has
\begin{align}\label{dkz81x}
\bbP(\widetilde Z^1_t>0)\leq Ae^{-at/n}\,.
\end{align}
In particular, letting $\widetilde Z_{t}=\sum_{i=1}^n\widetilde Z^i_t$, we conclude, as in the proof of Lemma \ref{fraglemma1x}, that 
\begin{align}\label{dkz8x}
\bbP(\widetilde Z_{T_t}\neq\emptyset)\leq 
n\,Ae^{-at/(2n)} +
e^{-c t} \,.
\end{align}
Thus the proof would be complete if we could replace $Z_{T_t}$ with $\widetilde Z_{T_t}$
in \eqref{dkz7x}. In order to finish the proof it is then sufficient to show that
\begin{align}\label{dkz72x}
\bbP(\cC_t\neq\emptyset)\leq \bbP(\widetilde Z_{T_t}\neq\emptyset) + e^{-c t}\,,
\end{align}
for some constant $c>0$.

To prove \eqref{dkz72x}, recall the definition of the random process $\widetilde W_t$ obtained by attaching  independent Poisson processes with intensity $1/n$ to each $x\in [n]$, such that $C_1^{(t)}= \widetilde W_{T_t}$. Let us add to the master evolution $C_1^{(t)}$ all other evolutions as explained above by doubling the splittings every time there is a branching involving a nonempty neighborhood of $x$. This yields a global process $\cQ_t$, $t\geq 0$ on a single evolution, such that $\{\cC_t=\emptyset\} = \{\cQ_{T_t}=\emptyset\}$. Note that the branching times (but not the choices of vertices $x$ to be updated) of newly added evolutions are fully coupled to the branching times of the master evolution dictated by the original Poisson process. If, for every newly opened evolution we use instead an independent realization of the Poisson point process, then we obtain a global process $\widetilde Q_t$, which is dominated as explained above by the process $\widetilde Z_t$. Thus it is sufficient to find a coupling of $\cQ_t$ and $\widetilde \cQ_t$ such that 
\begin{align}\label{dkz9ax}
\bbP(\cQ_{T_t}\neq\emptyset)\leq \bbP(\widetilde\cQ_{T_t}\neq\emptyset) + e^{-bt}\,,
\end{align}
 for some constant $b>0$. 

Note that if $b>0$ is a sufficiently small constant, then a simple large deviation bound shows that the probability that a Poisson process of intensity $1$ has a total number of arrivals in a time $t/2$ that is less than $b t$,  is at most $3^{-t}$. Thus, if there are $2^t$ such processes, the event $\cE_t$ that all of them have at least $bt$ arrivals within time $t/2$ satisfies 
\begin{align}\label{dkza8x}
\bbP(\cE_t^c)\leq 2^t3^{-t} = (2/3)^t. 
\end{align}
We can couple $\cQ_{t},\widetilde \cQ_{t}$ in such a way that, on the event $\{T_t\geq t/2\}\cap \cE_t$, we have $ \{\cQ_{T_t}\neq\emptyset\}\subset\{\cQ_{T_{bt}}\neq\emptyset\}$.  
It follows that, 
 \begin{align}\label{dkze9ax}
\bbP(\cC_t\neq\emptyset)=\bbP(\cQ_{T_t}\neq\emptyset)&\leq \bbP(\cQ_{T_t}\neq\emptyset,\,T_t\geq t/2, \cE_t) + (2/3)^t+ e^{-ct}\nonumber\\
& \leq \bbP(\widetilde\cQ_{T_{bt}}\neq\emptyset) + (2/3)^t+e^{-ct}\,.
\end{align}
On the other hand, 
$\widetilde\cQ_{t}\neq\emptyset$ implies $\widetilde Z_t\neq 0$, and thus, by \eqref{dkz8x},
 \begin{align}\label{dkze9ax1}
\bbP(\widetilde\cQ_{T_{bt}}\neq\emptyset)\leq n\,Ae^{-abt/(2n)} +
e^{-bc t} \,.
\end{align}
Combining \eqref{dkze9ax} and \eqref{dkze9ax1} proves the desired estimate \eqref{dkz1xx}.
 \end{proof}

 \subsection{Proof of Theorem \ref{th:mainthx}}
As in the proof of Theorem \ref{th:mainthla}, the first step is to use 
the coupling with random (star) graphs and Bernoulli variables to decompose the distribution at every internal node of the interaction tree. For this we use Lemma \ref{lem:deccax}. This leads to the analysis of the branching process, which in this case takes the form of coupon collecting with noise. The conclusion then follows from Lemma \ref{fraglemma2x}. With the notation from Theorem \ref{th:mainarb} we are going to establish that 
there exist absolute constants $\d_0>0$, $c>0$ and $C>0$ such that, if
\eqref{Dcondition} holds with constant~$\d_0$ then, for any choice of initial distributions $\vec p,\vec q$ with the same marginals as in \eqref{samemarg}, 
\begin{gather}\label{convergiarbxx}
\|S_t(\vec p)-S_t(\vec q)\|_{\rm TV}\leq Cne^{-c\,t/n},\qquad t\in\bbN\,,
\end{gather}
where $S_t(\vec p)$ denotes the distribution at the root of the binary tree when the leaves are equipped with the distributions $\vec p$ and the single site interaction \eqref{cw2ss} occurs at each internal node.

For each interaction we draw a uniformly random vertex $x\in[n]$, and reveal a realization of the neighborhood $G=\cV_x$ and a Bernoulli variable $B\in\{-1,+1\}$. Then, 
according to Lemma \ref{lem:deccax}, the interaction~\eqref{cw2ss}
has the form 
\begin{align}\label{eq:apqt}
p\circ q= \frac1{2n}\sum_{x,G,B}\nu^x_{\bf J}(G)\tfrac12(S(p,q\tc x,G,B)+S(q,p\tc x,G,B))
\end{align}
where 
\begin{align}\label{eq:bpqt}
S(p,q\tc x,G,B)(\t)&=\sum_{\si,\si'} p(\si)q(\si')\sum_{\eta_{x}\in\{-1,+1\}}\mu^x_{G,B}(\eta_{x}\tc \si_{V_G},\si'_{V_G})
\,\ind_{\t\sim (\si,\si',\eta_{x})},
\end{align}
and, for $\t\in\O$, the notation $\t\sim (\si,\si',\eta_{x})$ stands for 
\begin{align}\label{eq:pqt2x}
\t\sim (\si,\si',\eta_{x})\;\Leftrightarrow\;\begin{cases}\t_x=\si_x\,,&\,\eta_x=+1\\
\t_x=\si'_x\,,&\,\eta_x=-1\\
\t_y=\si_y\,,&\,\forall y\neq x
\end{cases}\end{align}
and $\mu^x_{G,B}(\cdot\tc \si_{V_G},\si'_{V_G})$ is a probability distribution on $\{-1,+1\}$ that depends on $\si,\si'$ only through $\si_{V_G},\si'_{V_G}$ if $G\neq\emptyset$, while it assigns the value $\eta_x=B$ deterministically if $G=\emptyset$. This definition reflects the fact that if $G=\emptyset$ then we assign the value of~$\eta_x$ by a fair coin flip~$B$, whereas if $G\neq \emptyset$ then $B$ is irrelevant and we sample~$\eta_x$ according to $\mu^x_{G}(\cdot\tc \si_{V_G},\si'_{V_G})$.
 
We remark that our definition of the maps $\Psi_0$ and $\Psi_1$ in Definition \ref{def:fragnoisex} is such that the only information about the distributions $p,q$ that is needed to compute $S(p,q\tc x,G,B)$ is contained in the marginals $p_{\Psi_0([n])}$ and $q_{\Psi_1([n])}$. Note the asymmetry between $p,q$ in this expression: symmetry is restored in~\eqref{eq:apqt} by the averaging
$\tfrac12(S(p,q\tc x,G,B)+S(q,p\tc x,G,B))$. The choice between the two options will then be encoded by a further Bernoulli variable $B'$ with parameter $1/2$, that we sample afresh at each interaction.

Repeating the argument in the proof of  Theorem \ref{th:mainarb}, one has that after $t$ steps, conditional on the realizations $\vec x$ of all random vertices, all graphs $\vec G$ and Bernoulli variables $\vec B, \vec B'$ involved in each of the $2^t-1$ interactions, the fragment $C^{(t)}_{i}$, $i=1,\dots,2^t$, as defined in Definition \ref{def:fragnoisex}, contains all the information needed from the distribution at the leaf $\vec B'(i)$, which is determined by taking the left or right descendant from the root at each internal node according to whether $B'=+1$ or $B'=-1$ at that node. 
As before, we observe that as soon as a fragment becomes either empty or contains one site only, then the information carried by the corresponding leaf is irrelevant.
 
In analogy with \eqref{eqcq1}, we write 
\begin{gather}\label{eqcq1x}
S_t(\vec p)=\sum_{(\vec x,\vec G,\vec B,\vec B')} \widehat\nu(\vec x,\vec G,\vec B,\vec B')\, S_t(\vec p\tc \vec x,\vec G,\vec B,\vec B')
\end{gather}
with 
 \begin{gather}\label{eqcq2a1x}
 \widehat\nu(\vec x,\vec G,\vec B,\vec B')=\prod_{i=1}^{2^t-1}\tfrac1{4n}\,
 \nu^x_{\bf J}(G^{(i)})
 \end{gather}
where   $S_t(\vec p\tc \vec x,\vec G,\vec B,\vec B')$ is some probability measure that may depend on $(\vec x,\vec G,\vec B,\vec B')$ and on $\vec p=(p_1,\dots,p_{2^t})$ in a complicated way but has the property that
\begin{gather}\label{hpw}
 S_t(\vec p\tc \vec x,\vec G,\vec B,\vec B')=S_t(\vec q\tc \vec x,\vec G,\vec B,\vec B')\,,
 \qquad \text{if\;}\; (\vec x,\vec G,\vec B,\vec B')\in \{\cC_t=\emptyset\}. 
 \end{gather}
 Arguing as in \eqref{eqcq2}-\eqref{convergiarbat}, 
 it follows that 
\begin{gather}\label{convergiarbaxx}
\| S_t(\vec p)- S_t(\vec q)\|_{\rm TV}\leq \bbP(\cC_t\neq\emptyset)\,.
\end{gather}
The conclusion \eqref{convergiarbxx} now follows from  Lemma \ref{fraglemma2x}.
This ends the proof of Theorem \ref{th:mainthx}.  \hfill $\square$

\section{Concluding remarks and open questions}\label{sec:open}
Our results leave open a number of interesting directions for further research, some of which we briefly mention here, 
along with some additional observations and extensions.

\begin{enumerate}
\item Can we prove exponential decay (even with an exponentially bad dependence on~$n$) for the nonlinear Ising dynamics for 
{\it arbitrary\/} interactions~${\bf J}$, without the high-temperature condition?  Recall that we did prove convergence for arbitrary~$\bf J$ in Theorem~\ref{thm:convintro},
but unlike its analogs for linear Markov chains our proof apparently gives no useful rate information (see Remark \ref{rem:irr}).

\item Can our high-temperature condition $\max_x\sum_{y\in V} |J_{xy}|\leq \d_0$ be relaxed to accommodate an optimal value
for~$\d_0$, in particular in the case of the complete graph (mean-field or Curie-Weiss model)?  Here all interactions $J_{xy}=\frac{\beta}{n}$,
and $\beta=1$ marks the phase transition.  Thus one might hope to sharpen our results to require only $\d_0<1$ in this case.
One might also hope to replace the $\ell_1$ condition on~$\bf J$ by a spectral condition, as e.g.\ in the recent works 
\cite{bauerschmidt2024log,ChenEldan22}.

\item We have proved rapid convergence in total variation distance, a natural goal.  However, one might naturally also ask
whether one can prove sharp bounds on the rate of contraction of relative entropy, a more delicate question.  This was done
recently in the non-interacting (population genetics) case in~\cite{CapSin,CapPar}.

\item As mentioned in the introduction, our results for the Ising model generalize to any spin system with a constant number
of spins at each vertex and bounded pairwise interactions, including the $q$-state Potts model, under an analogous Dobrushin-type condition~\eqref{Dcondition}.
(In that condition, the sum is now over the maximum absolute values of all interactions involving any given site~$x$.)  This follows
from the fact that, as can readily be checked, the representation of the measure $\gamma(\cdot\mid\si,\si')$ in terms of a two-spin system
as described in Lemma~\ref{lem:gamuJ} still holds in this
more general setting, and beyond that point the rest of the analysis depends only on that representation.

\item We note that, in contrast to linear Markov chains (which can be simulated using a single point walking randomly
around the state space), nonlinear dynamics do not immediately provide an efficient
algorithm for sampling from the stationary distribution, even when the convergence time is short.  This is a consequence
of the fact that, in order to obtain a single sample from the time-$t$ distribution~$T_t(p)$, we would naively need to
simulate the entire evolutionary tree of depth~$t$, which begins with $2^t$ independent random samples from~$p$ at the leaves.
(Indeed, Arora {\it et al.}~\cite{ARV} prove that simulating arbitrary reversible quadratic dynamics is in fact {\sc Pspace}-complete.)
However, for both of the dynamics we consider here, our analysis does in fact also provide a polynomial time algorithm for sampling
from (very close to) the stationary distribution~$\mu_{\bf J,h}$.   This is immediately obvious for the nonlinear block dynamics: 
since Theorem~\ref{thm:main2} establishes that $t=O(\log n)$ steps are enough for convergence, the entire tree is of polynomial size
and thus can be constructed in polynomial time.  (A detail here is that each interaction in the tree requires sampling the
set of sites~$\Lambda$ to be exchanged according to the distribution~\eqref{jtilde}.  However, this itself is a high-temperature
Ising Gibbs measure, and thus can be sampled from efficiently by other means.)  For the nonlinear Glauber dynamics, 
a polynomial time simulation is also possible once we observe from our analysis that the actual size of the master evolution
for $S_t(p)$ is $O(t)$; since the master evolution is sufficient to construct our time-$t$ sample, this can be done in ${\rm poly}(n)$ time.

The simulations in the previous paragraph leave much to be desired algorithmically, as they are rather unwieldy and, more
significantly, do not retain the underlying pairwise interaction structure of the nonlinear dynamics.
An interesting related question, for both processes, is whether a more explicit simulation is possible: namely, starting with
a finite population of size $N={\rm poly}(n)$, each member of which is sampled independently from the initial distribution~$p$,
evolve this population in the obvious way (by carrying out interactions between randomly chosen pairs to construct
the next-generation population of the same size).  The question is whether the time-$t$ population in this finite implementation
is close to the true population~$T_t(p)$, at least for modest times~$t$---this is not obvious since unwanted correlations will arise
due to the finite population size, though these will decrease with the population size~$N$.  (This implementation, which is actually
a Markov chain on a very large state space, is known as the ``Kac model" in kinetic theory \cite{kac1956foundations}, 
and the convergence to the true population is referred to as ``propagation of chaos".)  In the non-interacting (population
genetics) case, it was shown that a low-degree polynomial population size does in fact suffice~\cite{Sinetal2,CapPar}.
It would be interesting to see if this argument can be extended to the Ising model at high temperature; in particular, 
since the key insight in~\cite{Sinetal2} comes from the fragmentation process, the question boils down to whether the same
insight carries through in the presence of noise.
\end{enumerate}

\bibliographystyle{plain}

\bibliography{nonlinear}

\begin{thebibliography}{10}

\bibitem{AGHMR}
L.~Adleman, M.~Gopalkrishnan, M.-D. Huang, P.~Moisset, and D.~Reishus.
\newblock On the mathematics of the law of mass action.
\newblock {\em A Systems Theoretic Approach to Systems and Synthetic Biology~I:
  Models and System Characterizations}, pages 3--46, 2014.

\bibitem{ACKN}
D.~Anderson, D.~Cappelletti, J.~Kim, and T.~Nguyen.
\newblock Tier structure of strongly endotactic reaction networks.
\newblock {\em Stochastic Processes and Their Applications}, 130:7218--7259,
  2020.

\bibitem{DelMoraletal}
Christophe Andrieu, Ajay Jasra, Arnaud Doucet, and Pierre Del~Moral.
\newblock On nonlinear {M}arkov chain {M}onte {C}arlo.
\newblock {\em Bernoulli}, 17(3):987--1014, 2011.

\bibitem{ARV}
S.~Arora, Y.~Rabani, and U.V. Vazirani.
\newblock Simultating quadratic dynamical systems is {PSPACE}-complete.
\newblock {\em Proceedings of the 26th Annual ACM Symposium on the Theory of
  Computing (STOC)}, pages 459--467, 1994.

\bibitem{athreya1972branching}
Krishna~B Athreya and Peter~E Ney.
\newblock {\em Branching Processes}.
\newblock Springer-Verlag, Berlin, 1972.

\bibitem{bauerschmidt2024log}
Roland Bauerschmidt and Benoit Dagallier.
\newblock Log-sobolev inequality for near critical ising models.
\newblock {\em Communications on Pure and Applied Mathematics},
  77(4):2568--2576, 2024.

\bibitem{Boltz}
Ludwig Boltzmann.
\newblock The second law of thermodynamics.
\newblock In {\em {\it Popul\"are Schriften, Essay 3}, {\rm address to a formal
  meeting of the Imperial Academy of Science, 29 May, 1886; reprinted in {\it
  Ludwig Boltzmann, Theoretical Physics and Philosophical Problems}, S.G.~Brush
  (Trans.)}}. Reidel, Boston, 1974.

\bibitem{budhiraja2015local}
Amarjit Budhiraja, Paul Dupuis, Markus Fischer, and Kavita Ramanan.
\newblock Local stability of kolmogorov forward equations for finite state
  nonlinear markov processes.
\newblock {\em Electron. J. Probab}, 20(81):1--30, 2015.

\bibitem{butkovsky2014ergodic}
O.~Butkovsky.
\newblock On ergodic properties of nonlinear {M}arkov chains and stochastic
  {M}c{K}ean--{V}lasov equations.
\newblock {\em Theory of Probability \& Its Applications}, 58(4):661--674,
  2014.

\bibitem{CLL}
P.~Caputo, C.~Labb\'e, and H.~Lacoin.
\newblock Cutoff phenomenon in nonlinear recombinations.
\newblock {\em Preprint arXiv:2402.11396}, 2024.

\bibitem{CapSin}
P.~Caputo and A.~Sinclair.
\newblock Entropy production in nonlinear recombination models.
\newblock {\em Bernoulli}, 24(4B):3246--3282, 2018.

\bibitem{CapPar}
Pietro Caputo and Daniel Parisi.
\newblock Nonlinear recombinations and generalized random transpositions.
\newblock {\em Preprint arXiv:2207.04775}, 2022.

\bibitem{Villanietal}
Eric~A. Carlen, Maria~C. Carvalho, Jonathan Le~Roux, Michael Loss, and
  C{\'e}dric Villani.
\newblock Entropy and chaos in the {K}ac model.
\newblock {\em Kinet. Relat. Models}, 3(1):85--122, 2010.

\bibitem{carrillo2003kinetic}
Jos{\'e}~A Carrillo, Robert~J McCann, and C{\'e}dric Villani.
\newblock Kinetic equilibration rates for granular media and related equations:
  entropy dissipation and mass transportation estimates.
\newblock {\em Revista Matematica Iberoamericana}, 19(3):971--1018, 2003.

\bibitem{cattiaux2008probabilistic}
Patrick Cattiaux, Arnaud Guillin, and Florent Malrieu.
\newblock Probabilistic approach for granular media equations in the
  non-uniformly convex case.
\newblock {\em Probability Theory and Related Fields}, 140(1):19--40, 2008.

\bibitem{ChaintronDiez22-1}
L.-P. Chaintron and A.~Diez.
\newblock Propagation of chaos: {A} review of models, methods and applications.
  {I.}~models and methods.
\newblock {\em Kinetic and Related Models}, 15:895--1015, 2022.

\bibitem{ChaintronDiez22-2}
L.-P. Chaintron and A.~Diez.
\newblock Propagation of chaos: {A} review of models, methods and applications.
  {II.}~applications.
\newblock {\em Kinetic and Related Models}, 15:1017--1173, 2022.

\bibitem{ChenEldan22}
Yuansi Chen and Ronen Eldan.
\newblock Localization schemes: A framework for proving mixing bounds for
  markov chains.
\newblock In {\em 2022 IEEE 63rd Annual Symposium on Foundations of Computer
  Science (FOCS)}, pages 110--122, 2022.

\bibitem{CNP}
G.~Craciun, F.~Nazarov, and C.~Pantea.
\newblock Persistence and permanence of mass-action and power-law dynamical
  systems.
\newblock {\em SIAM Journal on Applied Mathematics}, 73:305--329, 2013.

\bibitem{DGR}
Emanuele Dolera, Ester Gabetta, and Eugenio Regazzini.
\newblock Reaching the best possible rate of convergence to equilibrium for
  solutions of {K}ac's equation via central limit theorem.
\newblock {\em Ann. Appl. Probab.}, 19(1):186--209, 2009.

\bibitem{DZ}
D.~Doty and S.~Zhu.
\newblock Computational complexity of atomic chemical reaction networks.
\newblock {\em Natural Computing}, 17:677--691, 2018.

\bibitem{erbar2020entropic}
Matthias Erbar, Max Fathi, and Andr{\'e} Schlichting.
\newblock Entropic curvature and convergence to equilibrium for mean-field
  dynamics on discrete spaces.
\newblock {\em ALEA}, 17:445--471, 2020.

\bibitem{FeinbergBook}
Martin Feinberg.
\newblock {\em Foundations of Chemical Reaction Network Theory}.
\newblock Springer Nature Switzerland, 2019.

\bibitem{VelenikFriedli2017}
Sacha Friedli and Yvan Velenik.
\newblock {\em Statistical Mechanics of Lattice Systems: A Concrete
  Mathematical Introduction}.
\newblock Cambridge University Press, 2017.

\bibitem{goldberg}
D.E. Goldberg.
\newblock {\em Genetic Algorithms in Search, Optimization and Machine
  Learning}.
\newblock Addison-Wesley, Reading MA, 1989.

\bibitem{GMS}
M.~Gopalkrishnan, E.~Miller, and A.~Shiu.
\newblock A geometric approach to the global attractor conjecture.
\newblock {\em SIAM Journal on Applied Dynamical Systems}, 13:758--797, 2014.

\bibitem{Hardy}
G.H. Hardy.
\newblock Mendelian proportions in a mixed population.
\newblock {\em Science}, 28:49--50, 1908.

\bibitem{Horn74}
F.~Horn.
\newblock The dynamics of open reaction systems.
\newblock In {\em Mathematical Aspects of Chemical and Biochemical Problems and
  Quantum Chemistry, {\rm SIAM-AMS Proceedings}}, volume~8, pages 125--137,
  1974.

\bibitem{HornJackson}
F.J.M. Horn and R.~Jackson.
\newblock General mass action kinetics.
\newblock {\em Arch. Rational Mech. Anal.}, 49:81--116, 1972.

\bibitem{Houdayer}
J.~Houdayer.
\newblock A cluster {M}onte {C}arlo algorithm for 2-dimensional spin glasses.
\newblock {\em The European Physical Journal~B--Condensed Matter and Complex
  Systems}, 21:479--484, 2001.

\bibitem{tempering}
K.~Hukushima and K.~Nemoto.
\newblock Exchange {M}onte {C}arlo method and application to spin glass
  simulations.
\newblock {\em Journal of the Physics Society of Japan}, 65:1604--1608, 1996.

\bibitem{kac1956foundations}
Mark Kac.
\newblock Foundations of kinetic theory.
\newblock In {\em Proceedings of The Third Berkeley Symposium on Mathematical
  Statistics and Probability}, volume~3, pages 171--197, 1956.

\bibitem{kolokoltsov2010nonlinear}
Vassili~N Kolokoltsov.
\newblock {\em Nonlinear Markov Processes and Kinetic Equations}, volume 182.
\newblock Cambridge University Press, 2010.

\bibitem{LPW}
D.~Levin, Y.~Peres, and E.~Wilmer.
\newblock {\em {M}arkov Chains and Mixing Times}.
\newblock American Mathematical Soc., 2009.

\bibitem{lubetzky2017universality}
Eyal Lubetzky and Allan Sly.
\newblock Universality of cutoff for the {I}sing model.
\newblock {\em The Annals of Probability}, 45(6A):3664--3696, 2017.

\bibitem{mckean1966class}
Henry~P. McKean, Jr.
\newblock A class of {M}arkov processes associated with nonlinear parabolic
  equations.
\newblock {\em Proceedings of the National Academy of Sciences},
  56(6):1907--1911, 1966.

\bibitem{mischler2013kac}
St{\'e}phane Mischler and Cl{\'e}ment Mouhot.
\newblock Kac's program in kinetic theory.
\newblock {\em Inventiones Mathematicae}, 193(1):1--147, 2013.

\bibitem{Mitchell}
Melanie Mitchell.
\newblock {\em An Introduction to Genetic Algorithms}.
\newblock MIT Press, Cambridge, MA, 1998.

\bibitem{muzychka2011class}
S.A. Muzychka and K.L. Vaninsky.
\newblock A class of nonlinear random walks related to the
  {O}rnstein-{U}hlenbeck process.
\newblock {\em Preprint arXiv:1107.0850}, 2011.

\bibitem{NZB17}
H.C. Nguyen, R.~Zecchina, and J.~Berg.
\newblock Inverse statistical problems: from the inverse {I}sing problem to
  data science.
\newblock {\em Advances in Physics}, 66:197--261, 2017.

\bibitem{Pantea}
C.~Pantea.
\newblock On the persistence and global stability of mass action systems.
\newblock {\em SIAM Journal on Mathematical Analysis}, 44:1636--1673, 2012.

\bibitem{propp1996exact}
J.G. Propp and D.B. Wilson.
\newblock Exact sampling with coupled {M}arkov chains and applications to
  statistical mechanics.
\newblock {\em Random Structures \& Algorithms}, 9(1-2):223--252, 1996.

\bibitem{Sinetal2}
Y.~Rabani, Y.~Rabinovich, and A.~Sinclair.
\newblock A computational view of population genetics.
\newblock {\em Random Structures \& Algorithms}, 12(4):313--334, 1998.

\bibitem{Sinetal}
Y.~Rabinovich, A.~Sinclair, and A.~Wigderson.
\newblock Quadratic dynamical systems.
\newblock In {\em Proceedings of the 33rd Annual IEEE Symposium on Foundations
  of Computer Science (FOCS)}, pages 304--313. IEEE, 1992.

\bibitem{shchegolev2022convergence}
A.~Shchegolev and A.~Veretennikov.
\newblock On convergence rate bounds for a class of nonlinear {M}arkov chains.
\newblock {\em Preprint arXiv:2209.12834}, 2022.

\bibitem{SM00}
D.~Siegel and D.~MacLean.
\newblock Global stability of complex balanced mechanisms.
\newblock {\em Journal of Mathematical Chemistry}, 27:89--110, 2000.

\bibitem{SV13}
Mohit Singh and Nisheeth Vishnoi.
\newblock Entropy, optimization and counting.
\newblock {\em Proceedings of the 46th ACM Symposium on Theory of Computing
  (STOC)}, pages 50--59, 2014.

\bibitem{SW86}
R.~Swendsen and J.-S. Wang.
\newblock Replica {M}onte {C}arlo simulation of spin-glasses.
\newblock {\em Physical Review Letters}, 57:2607--2609, 1986.

\bibitem{sznitman1991topics}
Alain-Sol Sznitman.
\newblock Topics in propagation of chaos.
\newblock In {\em \'Ecole d'{\'E}t{\'e} de Probabilit{\'e}s de Saint-Flour XIX,
  1989}, pages 165--251. Springer, 1991.

\bibitem{BG}
J.~van~den Berg and A.~Gandolfi.
\newblock B{K}-type inequalities and generalized random-cluster
  representations.
\newblock {\em Probab. Theory Related Fields}, 157(1-2):157--181, 2013.

\bibitem{Villanisurvey}
C.~Villani.
\newblock Entropy production and convergence to equilibrium.
\newblock In {\em Entropy Methods for the {B}oltzmann Equation}, volume 1916 of
  {\em Lecture Notes in Math.}, pages 1--70. Springer, Berlin, 2008.

\bibitem{villani2002review}
C{\'e}dric Villani.
\newblock A review of mathematical topics in collisional kinetic theory.
\newblock {\em Handbook of Mathematical Fluid Dynamics}, 1:71--305, 2002.

\bibitem{vuckovicnonlinear}
James Vuckovic.
\newblock Nonlinear {MCMC} for {B}ayesian machine learning.
\newblock In Alice~H. Oh, Alekh Agarwal, Danielle Belgrave, and Kyunghyun Cho,
  editors, {\em Advances in Neural Information Processing Systems}, 2022.

\bibitem{Weinberg}
Wilhelm Weinberg.
\newblock \"{U}ber den {N}achweis der {V}ererbung beim {M}enschen.
\newblock {\em {J}arheshefte des {V}ereins f\"ur vaterl\"andische {N}aturkunde
  in {W}\"urttemberg}, 64:368--382, 1908.

\bibitem{Weitz}
D.~Weitz.
\newblock Combinatorial criteria for uniqueness of {G}ibbs measures.
\newblock {\em Random Structures \& Algorithms}, 27:445--475, 2005.

\end{thebibliography}

\end{document}